\documentclass{amsart}
\usepackage[utf8]{inputenc}
\usepackage[polutonikogreek,greek,japanese,french,english]{babel}
\usepackage[scr=kp]{mathalpha}
\usepackage{dutchcal}
\usepackage[dvipsnames,svgnames,table]{xcolor}
\usepackage{mathtools}
\usepackage{amsmath}
\usepackage{amsthm}
\usepackage{geometry}
\usepackage{amssymb}
\usepackage{bbold}
\usepackage[all]{xy}
\usepackage{etoolbox}
\usepackage[hidelinks]{hyperref}
\usepackage{enumitem}
\usepackage[pdf]{pstricks}
\usepackage{pgfplots}
\pgfplotsset{compat = newest}
\usepackage{csquotes,xpatch}

\theoremstyle{plain}% default
\newtheorem{tho}[subsubsection]{Theorem}
\newtheorem{thintro}[subsection]{Theorem}
\newtheorem{lemme}[subsubsection]{Lemma}
\newtheorem{prop}[subsubsection]{Proposition}
\newtheorem{propintro}[subsection]{Proposition}
\newtheorem{cor}[subsubsection]{Corollary}

\theoremstyle{remark}
\newtheorem{rmq}[subsubsection]{Remark}

\theoremstyle{definition}
\newtheorem{deff}[subsubsection]{Definition}
\newtheorem{defintro}[subsection]{Definition}
\newtheorem{para}[subsubsection]{}

\newcommand{\cC}{\mathcal{C}}

\newcommand{\cE}{\mathcal{E}}
\newcommand{\cF}{\mathcal{F}}

\newcommand{\cH}{\mathcal{H}}
\newcommand{\cI}{\mathcal{I}}

\newcommand{\cL}{\mathcal{L}}

\newcommand{\cM}{\mathscr{M}}
\newcommand{\cN}{\mathscr{N}}
\newcommand{\cO}{\mathcal{O}}

\newcommand{\cQ}{\mathcal{Q}}

\newcommand{\cS}{\mathcal{S}}
\newcommand{\cT}{\mathcal{T}}

\newcommand{\cV}{\mathcal{V}}
\newcommand{\cW}{\mathcal{W}}

\newcommand{\eE}{\mathbb{E}}
\newcommand{\eF}{\mathbb{F}}

\newcommand{\eN}{\mathbb{N}}

\newcommand{\eP}{\mathbb{P}}

\newcommand{\eS}{\mathbb{S}}

\newcommand{\eV}{\mathbb{V}}

\newcommand{\eZ}{\mathbb{Z}}
\newcommand{\eUn}{\mathbb{1}}

\newcommand{\fD}{\mathfrak{D}}
\newcommand{\fF}{\mathfrak{F}}

\newcommand{\fH}{\mathfrak{H}}

\newcommand{\fp}{\mathfrak{p}}
\newcommand{\fq}{\mathfrak{q}}

\newcommand{\fU}{\mathfrak{U}}

\newcommand{\bfQ}{\mathbf{Q}}

\newcommand{\bfW}{\mathbf{W}}

\newdir{ >}{{}*!/-8pt/@{>}}

\newcommand{\Hom}[2][]{\mathrm{Hom}_{#1}\left(#2\right)}

\newcommand{\Thom}[2][]{\operatorname{Th}_{#1}\left(#2\right)}

\newcommand{\Maps}[2][]{\operatorname{Maps}_{#1}\left(#2\right)}

\newcommand{\GW}{GW}
\newcommand{\fId}{{\mathbf{I}}}
\newcommand{\Witt}{\operatorname{W}}
\newcommand{\KMW}[2][\ast]{K^{MW}_{#1}\!\left( #2 \right)}
\newcommand{\sKW}{\underline{K}^W}
\newcommand{\RostS}[4][]{\cC^{#1}\left(#2,\underline{K}^{MW}_{#3}\left(#4\right)\right)}
\newcommand{\RostSO}[3][]{\cC^{#1}\left(#2,\underline{K}^{MW}_{#3}\right)}
\newcommand{\RSgrp}[4]{H^{#1}\left(#2,\underline{K}_{#3}^{MW}\left(#4\right)\right)}
\newcommand{\RSgrpO}[3]{H^{#1}\left(#2,\underline{K}_{#3}^{MW}\right)}
\newcommand{\Chowtilde}[2][]{\widetilde{\operatorname{CH}}{}^{#1}\!\left(#2\right)}
\newcommand{\Chowtildecov}[2][]{\widetilde{\operatorname{CH}}_{#1}\left(#2\right)}
\newcommand{\StableHo}[2][]{\operatorname{S}\!\cH^{#1}\!\left(#2\right)}

\newcommand{\fvirt}[1]{\left\langle #1\right\rangle}

\newcommand{\Sq}{\operatorname{Sq}}
\newcommand{\simquad}{\sim_q}

\newcommand{\Picard}[1]{\mathrm{Pic}\left(#1\right)}

\newcommand{\img}{\mathrm{Im}}

\newcommand{\qscal}[1]{\left\langle #1 \right\rangle}
\newcommand{\qqscal}[1]{\left\langle\left\langle #1 \right\rangle\right\rangle}
\newcommand{\Id}{\mathrm{Id}}

\newcommand{\SL}{\operatorname{SL}}
\newcommand{\Gl}{\operatorname{GL}}
\newcommand{\isomr}{{\xrightarrow{\sim}}}

\newcommand{\Spec}[1]{\operatorname{Spec}\left(#1\right)}

\newcommand{\Proj}[1]{\operatorname{Proj}\left(#1\right)}
\newcommand{\Symalg}[2][]{\operatorname{Sym}^{#1}\left(#2\right)}

\newcommand{\Grassm}[2][]{\operatorname{Grass}_{#1}\left(#2\right)}

\newcommand{\Grass}{\operatorname{Gr}}

\newcommand{\Hilbsch}[2][]{\mathcal{H}ilb_{#1}\left(#2\right)}
\newcommand{\Hdrtes}{\Sigma_1}

\newcommand{\Chowct}[2][]{\operatorname{CH}^{#1}\!\left( #2 \right)}

\newcommand{\Chd}[2][]{\operatorname{Ch}^{#1}\!\left( #2 \right)}

\newcommand{\rest}[2]{{#1}_{\mid #2}}

\newcommand{\accol}[1]{\left\{#1\right\}}

\title{A quadratically enriched count of lines in smooth del Pezzo surfaces of degree 2 and 4}
\author{Victor CHACHAY}
\date{\today}

\begin{document}
\maketitle

\begin{abstract}
We give a computation of some Euler classes in Chow-Witt groups associated to the count of lines of smooth del Pezzo surfaces of degree 2 and 4. The description of Chow-Witt groups of projective bundles over Grassmannians for vector bundles that are not relatively orientable is the main part of the article. We show that the quadratic count is not enriched as the Chow-Witt group is isomorphic to the Chow group. In this setting, we give an expression of the classes of even rank in the Chow-Witt group as multiples of the hyperbolic element $h.$  A direct application of this construction is the count for the del Pezzo surfaces.
\end{abstract}

\tableofcontents
\newpage
\addtocontents{toc}{\protect\setcounter{tocdepth}{1}}% cache les sous-sections dans la table des matieres
\section{Introduction}
\subsection*{The idea and context}
The aim of this work is to give an enumerative invariant with values in quadratic forms for the count of lines in smooth del Pezzo surfaces of degree 2 and 4. This idea first came from the quadratically enriched count of lines in cubic surfaces, done by Kass and Wickelgren in \cite{Kass_Wickelgren}, and a lot of results are already known in the "quadratically enriched" intersection theory of Chow-Witt rings. 

A difficulty to define and compute these quadratically enriched counts is related to certain orientability properties (c.f. Definition \ref{DEF - orientabilite relative}), a similar problem as for real manifolds. Under suitable orientability hypotheses, works like \cite{BW_excess}, \cite{Kass_Wickelgren}, \cite{Levine_Pauli} or \cite{LevineAB24} gave answers and techniques encompassing both known real and complex invariants (e.g. the cubic surface: the quadratic value $3 + 12h$ gives both the $27$ of the complex count and the 3 as difference between elliptic and hyperbolic lines of \cite{FK_real-cubic}).

When the geometric construction is not orientable, some computations were done by adding more constraints to the original problem, as in \cite{highly_tangent_lines}, \cite{darwin22} or \cite{28-bitangents}. More explicitly, in \cite{28-bitangents}, the authors remove a particular divisor to have relative orientability of the vector bundle on the open complement of the divisor and make a quadratic count with respect to this choice. The count depends on the fact that the zero locus of sections of the vector bundle doesn't meet the divisor. Thus the enumerative problem is slightly changed compared to just counting lines. Similarly, this choice in \cite{darwin22} is reflected by changing the vector bundle to add this open condition.

Instead of making more choices to end up in the Grothendieck-Witt group of quadratic forms and have an Euler degree, here we compute the Euler class in the cohomology group and give as much interpretation as possible directly there. 

\subsection*{The work and results}
A smooth del Pezzo surface $S$ of degree 2 can be seen as a double cover of $\eP^2$ ramified along a smooth quartic curve. If the base field $\fF$ is algebraically closed, the 56 lines appear as inverse images of the 28 bitangents to the quartic (c.f. \cite[Section 8.7]{Dolgachev}). Our strategy to define a possible quadratically enriched count of lines in such surfaces follows the one in \cite{Kass_Wickelgren} where they realise the Hilbert scheme of lines of $S$ as the zero locus of a global section of some locally free sheaf over a suitable variety. We first describe $S$ as a hypersurface in a projective bundle $P$ over $\eP^2$ (c.f. Proposition \ref{PROP - surface deg 2 dans fibre projectif}). Doing so puts us in a similar setup as \cite{Kass_Wickelgren}. Indeed, for a cubic surface in $\eP^3$, one can describe the lines of the surface as a subvariety in the Grassmannian of lines in $\eP^3$ using the incidence correspondence (c.f. Lemma \ref{LEM - schema incidence grassmannienne}):
\[\xymatrix{ &I \simeq \eP(\cQ) \ar[ld]_p \ar[rd]^q &\hspace{-0.1cm} \\ \eP^3&& \hspace{-0.1cm}\Grass(4,2).}\]
with $\cQ$ the canonical quotient sheaf of the Grassmannian. Since the cubic surface is given by the zero locus of a global section of $\cO_{\eP^3}(3)$, its lines are given by the zero locus of a global section of $q_\ast p^\ast \cO_{\eP^3}(3)$ which isomorphic to $\Symalg[3]{\cQ}.$ 

However, the Hilbert scheme of lines in $P = \eP(\cO_{\eP^2}\oplus \cO_{\eP^2}(2))$ is a bit more tricky to define (c.f. Proposition \ref{PROP - schema Hilbert droites de P}) and the incidence scheme is hard to characterise. In the end, this pull-push does not give an explicit description of the locally free coherent sheaf induced. We could only deduce its rank (c.f. Proposition \ref{PROP - dimension du fibre vectoriel voulu}). To summarize, we end up with a diagram:
\[\xymatrix{&& \fU \ar[ld]_{\pi_1} \ar@{}[rd]^(.1){}="a"^(.9){}="b" \ar^{\pi_2} "a";"b" & \hspace{-0.2cm}\\
S \ar@{^{(}->}[r] \ar[rdd] & P \ar[dd]^p &&\hspace{-0.2cm} Y \ar@<-0.1cm>[dd]_q\\
&&I\ar[ld]_{q_1} \ar[rd]^{q_2} &\hspace{-0.2cm}\\
&\eP^2 &&\hspace{-0.2cm} \Grass(3,2)
}\]
with $S$ as the zero locus of a section of $\cO_P(2)$, the projective bundle $Y \simeq \eP(\cO_{\Grass(3,2)}\oplus \Symalg[2]{\cQ^\vee})$ the Hilbert scheme of lines of $P$ and $\fU$ the universal family. The global section defining $S$ gives a section of $\cM = \pi_{2,\ast}\pi_1^\ast \cO_P(2),$ a locally free sheaf of rank $5$ on $Y$ (c.f. Corollary \ref{COR - droites comme zero section fibre}).

Over an algebraically closed field, we can fully describe $\cM$ in a similar way as \cite[Proposition 2.2]{Tih_1980} and it is enough for the Chow group. In this paper, we work over a non algebraically closed field, so we don't have an explicit description of $\cM$ or $\det \cM$. Thus we had to check all possibilities (up to quadratic equivalence) to describe the group $\Chowtilde[5]{Y,\det \cM}$ in which we end up. It only depends on the orientability of $\cM$ and the possibility to take the quadratic degree or not. Let us recall what we mean by (relative) orientability (c.f. Definition \ref{DEF - orientabilite relative}):
\begin{defintro}[Relative orientability]
A locally free sheaf $\cE$ of rank $\dim X$ on $X$ is relatively orientable over $X$ if there exist an invertible sheaf $\cL$ on $X$ such that $\det \cE \simeq \omega_X \otimes \cL^{\otimes 2}.$
\end{defintro}
Satisfying this definition is the only way to have a canonical meaning to the quadratic degree of $e(\cM).$ So the end result is one of the following:
\begin{enumerate}
    \item If $\cM$ is not orientable, then we have a description of the whole Chow-Witt group
\begin{propintro}[Lemma \ref{PROP - chowtilde non orientable est chow} below]
If $\det \cM$ is not orientable on $Y$, there is no quadratic information in $\Chowtilde[5]{Y,\det \cM}.$ Namely, there is a natural isomorphism 
\[\Chowtilde[5]{Y,\det \cM} \isomr \Chowct[5]{Y}.\]
\end{propintro}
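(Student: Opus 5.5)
The plan is to use the fundamental fiber square relating Chow-Witt groups, Chow groups and the cohomology of $\fId^j$ (or $\Witt$). Concretely, for a twist $\cL$ there is the key exact sequence
\[\RSgrp{5}{Y}{6}{\det\cM}\xrightarrow{\eta} \Chowtilde[5]{Y,\det\cM}\longrightarrow \Chowct[5]{Y}\xrightarrow{\partial} H^{6}\left(Y,\fId^{6}(\det\cM)\right),\]
together with the description of $\Chowtilde{}$ as the fiber product of $\Chowct{}$ and $H^\ast(Y,\fId^\ast(\det\cM))$ over $\Chd{}$. So the statement reduces to two claims. First, the forgetful map $\Chowtilde[5]{Y,\det\cM}\to\Chowct[5]{Y}$ is injective, i.e. $H^5(Y,\fId^5(\det\cM))\to \Chd[5]{Y}$ is injective. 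Second, it is surjective, i.e. every class of $\Chd[5]{Y}$ lifting to integral Chow classes lifts to $H^5(Y,\fId^5(\det\cM))$.

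I would compute $H^\ast(Y,\fId^\ast(\cL))$, or equivalently $H^\ast(Y,\sKW(\cL))$, for every line bundle $\cL$ by iterating projective bundle formulas. This is the main content announced in the abstract. $Y=\eP(\cO\oplus\Symalg[2]{\cQ^\vee})$ is a $\eP^3$-bundle over $\Grass(3,2)\simeq\eP^2$. For Witt cohomology of a projective bundle $\eP(E)$ of odd relative dimension with twist $\cO(1)^{a}\otimes p^\ast\cL'$, the Fasel projective bundle theorem splits $H^\ast(\eP(E),\sKW)$ into copies of the base Witt cohomology in twists $\cL'$ and $\cL'\otimes\det E$, placed in degrees $0$ and $\operatorname{rk}E-1$. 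The parity of $a$ decides whether the bottom and top pieces survive. The Witt cohomology of $\eP^2$ is $\Witt(\fF)$ in degree $0$ and $\Witt(\fF)$ in degree $2$ (twisted by $\omega$) for the trivial twist, and vanishes in the other twist. Here $\det\Symalg[2]{\cQ^\vee}\simeq\cO(-3)$ is odd, and $\omega_Y\simeq\cO_Y(-4)\otimes p^\ast(\omega_{\eP^2}\otimes\det E)$. These two facts let me check that a twist $\det\cM$ which is not of the form $\omega_Y\otimes\cL^{\otimes2}$ forces all Witt-cohomology summands to vanish, or at least kills the group in degree $5=\dim Y$ together with its neighbours in degrees $4$ and $6$. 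Since $\Picard{Y}=\eZ\,\cO_Y(1)\oplus\eZ\, p^\ast\cO(1)$, the twists modulo squares form $(\eZ/2)^2$, and the non-orientable case consists of three classes. I would treat these case by case.

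Granting the vanishing of $H^\ast(Y,\sKW(\det\cM))$, I finish with the standard argument. The sequence $\fId^{j+1}\to\fId^j\to\bar K^M_j$ gives an isomorphism between $H^5(Y,\fId^5(\det\cM))$ and the kernel of the relevant Steenrod-type differential on $\Chd[5]{Y}$. Equivalently, the $\eta$-torsion-free part vanishes, and $\Chowtilde{}\to\Chowct{}$ becomes an isomorphism once $h$-multiples are identified with hyperbolic images. More cleanly, I would use that for cellular varieties (here $Y$ is cellular, being an iterated projective bundle) the Chow-Witt group sits in a cartesian square with $\ker\partial$ and $\Witt$-cohomology. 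Vanishing of the $\Witt$-part then gives $\Chowtilde[5]{Y,\det\cM}\simeq\ker(\Sq^2_{\det\cM}\circ\rho)\subset\Chowct[5]{Y}$, and since $\Chd[6]{Y}=0$ because $\dim Y=5$, this kernel is all of $\Chowct[5]{Y}$.

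The main obstacle is the twisted Witt-cohomology computation for the non-relatively-orientable $\eP^3$-bundle over $\eP^2$. One has to track carefully which twists $\cO_Y(a)\otimes p^\ast\cO(b)$ give vanishing in degree $5$ (and $4$), and it is here that the relative orientability hypothesis on $E$ enters. I would first try a localization sequence with respect to the section $\eP(\cO)\subset Y$ and its complement, a vector bundle over $\eP(\Symalg[2]{\cQ^\vee})$, to reduce to $\eP^2$-bundles over $\eP^2$, where the known formulas apply directly.
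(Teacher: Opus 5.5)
Your reduction is correct and matches the paper's. Via the fibre product $\Chowtilde[5]{Y,\det\cM}\simeq H^5(Y,\sKW_5(\det\cM))\times_{\Chd[5]{Y}}\Chowct[5]{Y}$, the statement is equivalent to $\rho\colon H^5(Y,\fId^5(\det\cM))\to\Chd[5]{Y}\simeq\eZ/2$ being an isomorphism. Surjectivity of $\rho$ is automatic in top degree, since $\beta$ lands in $H^6=0$.

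The gap is in injectivity. The vanishing you aim for, ``$H^\ast(Y,\sKW(\det\cM))=0$'', is false in degree $5$ under the paper's convention $\sKW_j=\fId^j$. That group surjects onto $\eZ/2$, and the paper shows it is exactly $\eZ/2$. If it were zero, the fibre product would be $2\eZ\cdot[pt]$, contradicting the statement. The inputs you quote are facts about the Witt sheaf $\bfW$, not about $\fId^\ast$. For instance, for the trivial (non-orientable) twist on $\eP^2$ one has $H^2(\eP^2,\bfW)=0$ but $H^2(\eP^2,\fId^2)\simeq\eZ/2$.

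Suppose instead you mean $\bfW$. Then $H^5(Y,\bfW(\cL))=0$ is true for the three non-orientable classes, but the ``standard argument'' does not turn this into injectivity of $\rho$. The sequences $0\to\fId^{j+1}\to\fId^j\to\bar K^M_j\to 0$ give $H^5(Y,\fId^j(\cL))\simeq H^5(Y,\bfW(\cL))$ for $j\le 4$. Hence $H^5(Y,\fId^5(\cL))=\beta(\Chd[4]{Y})$, and since $\rho\beta=\Sq^2_\cL$, one gets $\ker\rho=\beta\bigl(\ker(\Sq^2_\cL\colon\Chd[4]{Y}\to\Chd[5]{Y})\bigr)$.

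So you still need every $\Sq^2_\cL$-cocycle in $\Chd[4]{Y}$ to lift to $H^4(Y,\fId^4(\cL))$. This is a degree-$4$ comparison between $\bfW$-cohomology and $\Sq^2$-cohomology, in effect degeneration of the Bockstein spectral sequence. It does not follow formally from cellularity, nor from $H^4(Y,\bfW(\cL))=0$, and you neither state nor check it. The cartesian square ``with $\ker\partial$ and Witt cohomology'' that you invoke hides the same hypothesis. Its fourth corner is $\ker\Sq^2_\cL/\img\Sq^2_\cL$, not $\Chd[5]{Y}$. Your final identification $\ker(\Sq^2\circ\rho)=\Chowct[5]{Y}$ is just $\ker\partial$, i.e.\ the automatic surjectivity again.

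The paper avoids all this by applying Fasel's projective bundle theorem for $\fId^j$-cohomology itself, together with Wendt's computation on the Grassmannian. This gives $H^5(Y,\sKW_5(\det\cM))\simeq\eZ/2$ directly. The surjection $\rho$ between two copies of $\eZ/2$ is then an isomorphism.
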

The problem is that the inverse image of 1 is hard to describe properly since it needs the choice of an orientation that we don't have in general. The really interesting fact is that a class of even degree in $\Chowct[5]{Y}$ has an explicit description in $\Chowtilde[5]{Y,\det \cM}.$ Indeed, the multiplication by the hyperbolic element $h$ gives the inverse image for classes of even degree (c.f. Proposition \ref{PROP - foncteur H bien def indep torsion}).

    \item If $\cM$ is orientable, then things work out a bit differently. We can compute the Euler degree (i.e. the degree of the Euler class) and using \cite[Proposition 4.4]{Levine_euler-enumerative}, we get that the degree has to be a multiple of $h$ in $\GW(\fF)$. 
\end{enumerate}

We put all cases together and with the fact that the top Chern class of $\cM$ can be determined (c.f. Lemma \ref{LEM - classe chern M sur C}), we finally get 
\begin{thintro}[Main Theorem \ref{THO - classe Euler del Pezzo deg 2}]
A quadratic enrichment of the 56 lines in a del Pezzo surface of degree 2 is:
\begin{enumerate}
    \item if $\cM$ is not orientable
    \[e^{CW}(\cM) = 28h \in \Chowtilde[5]{Y,\det \cM}\]
    \item if $\cM$ is orientable
    \[\deg^{CW} e^{CW}(\cM) = 28h \in \GW(\fF).\]
\end{enumerate}
\end{thintro}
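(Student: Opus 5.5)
The proof splits according to the orientability of $\cM$ relative to $Y$, following the dichotomy of the introduction. In both cases the first input is the classical count. By Corollary \ref{COR - droites comme zero section fibre}, the lines of $S$ are the zero locus of the section $\sigma$ of $\cM$ induced by the equation of $S$ in $\cO_P(2)$. Over an algebraic closure this zero locus is reduced and consists of $56$ points. Lemma \ref{LEM - classe chern M sur C} then gives the top Chern class: $c_5(\cM)\in\Chowct[5]{Y}\simeq\eZ$ (as $\dim Y=5$) equals $56$ times the class of a rational point. The Chern class is compatible with base change, and $\Chowct[5]{Y}$ does not change under field extension for this cellular variety (a projective bundle over a Grassmannian). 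So the same identity holds over $\fF$. We also use the standard fact that the forgetful map $\Chowtilde[5]{Y,\det\cM}\to\Chowct[5]{Y}$ sends $e^{CW}(\cM)$ to $c_5(\cM)$.

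\emph{Non-orientable case.} By Lemma \ref{PROP - chowtilde non orientable est chow}, the forgetful map is an isomorphism $\Chowtilde[5]{Y,\det\cM}\isomr\Chowct[5]{Y}$. Hence $e^{CW}(\cM)$ is the unique preimage of $56[\mathrm{pt}]$. Since $56$ is even, Proposition \ref{PROP - foncteur H bien def indep torsion} identifies this preimage explicitly: the hyperbolic element $h$ acting on the class $28[\mathrm{pt}]$ gives an element whose image in $\Chowct[5]{Y}$ is $\mathrm{rk}(h)\cdot 28[\mathrm{pt}]=56[\mathrm{pt}]$. This element is independent of choices of orientation or twist, which is exactly what that proposition guarantees. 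By injectivity, $e^{CW}(\cM)=28h$.

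\emph{Orientable case.} Now $\det\cM\simeq\omega_Y\otimes\cL^{\otimes2}$, so the pushforward $\deg^{CW}:\Chowtilde[5]{Y,\det\cM}\to\Chowtilde[0]{\Spec{\fF}}=\GW(\fF)$ is defined. Its rank is $\deg c_5(\cM)=56$. The key input is \cite[Proposition 4.4]{Levine_euler-enumerative}: when the relevant Euler class comes with a quadratic-form structure making it hyperbolic, the Euler degree is a multiple of $h$. I would verify that its hypothesis holds here. This could be an odd-rank summand or involution structure of $\cM$, or, since $Y$ has a cellular decomposition, vanishing of the Witt-part contribution: its image in $\Witt(\fF)$ is the real-realisation count, and this should be zero because the relevant Witt cohomology of $Y$ in top degree with this twist vanishes. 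That gives $\deg^{CW}e^{CW}(\cM)=mh$. Comparing ranks, $2m=56$, and since $h$ has no torsion ambiguity in rank, $\deg^{CW}e^{CW}(\cM)=28h$.

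\emph{Main obstacle.} The hard step is justifying that $e^{CW}$ is hyperbolic in the orientable case, i.e. that the hypotheses of Levine's proposition hold without an explicit description of $\cM$ over $\fF$. My first attempt would be a twisted Witt-cohomology computation for the projective bundle $Y\to\Grass(3,2)$: show $H^5(Y,\mathbf{W}(\det\cM))=0$, or at least that the Witt-valued Euler class vanishes because $\cM$ has odd rank $5$. In that case $e(\cM)$ is killed by $\eta$-type arguments, and only the hyperbolic part survives.
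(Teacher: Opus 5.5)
\textbf{Non-orientable case.} This part is the paper's argument. Proposition \ref{PROP - chowtilde non orientable est chow} applies because $Y$ is a $\eP^3$-bundle over $\Grass(3,2)$, so $r=n=3$ are both odd. It makes the forgetful map an isomorphism, and $F\circ H=2$ identifies the preimage of $56[pt]$ as $28h$.

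\textbf{Orientable case.} Here there is a gap. You leave the decisive step as an ``obstacle'', you are unsure what hypothesis Levine's result needs, and the route you would try first fails.

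Your first route is to show $H^5(Y,\mathbf{W}(\det\cM))=0$. This group does not vanish when $\det\cM\simquad\omega_Y$. Take a rational point $y\in Y$ (one exists, since $Y$ is a projective bundle of a split bundle over a Grassmannian). Its class pushes forward to a rank-one form $\qscal{u}\in\GW(\fF)$. The image of $\qscal{u}$ in $\Witt(\fF)$ is nonzero, because rank mod $2$ is well defined on $\Witt(\fF)$. Pushforward commutes with passing to Witt coefficients, so the Witt image of $[y]$ in $H^5(Y,\mathbf{W}(\omega_Y))$ is nonzero.

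You also do not need any extra structure on $\cM$, such as a summand or an involution. The only input required is that $\cM$ has odd rank, which it does.

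The argument that closes the case is the one you list last. By Lemma \ref{LEM - annulation classe Witt rang impair}, $\eta\, e^{CW}(\cM)=0$ because $\operatorname{rk}\cM=5$ is odd. Hence the Witt-valued Euler class vanishes, and so does the image of $\deg^{CW}e^{CW}(\cM)$ in $\Witt(\fF)$. Since $\GW(\fF)$ is the fibre product of $\Witt(\fF)$ and $\eZ$ over $\eZ/2$ (Corollary \ref{COR - groupe Chow-Witt corps}), an element with Witt image $0$ and rank $56$ must be $28h$. This is exactly Proposition \ref{PROP - classe Euler orientable rang impair}, which the paper combines with Lemma \ref{LEM - classe chern M sur C}. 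Commit to this odd-rank vanishing argument and drop the cohomology-vanishing route.
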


We apply the same technique to get a similar result for del Pezzo surfaces of degree 4 (but this time we know explicitly the locally free coherent sheaf over the Grassmannian):
\begin{thintro}[Proposition \ref{THO - classe Euler del Pezzo degre 4} below]
A quadratic enrichment of the 16 lines in a degree 4 del Pezzo surface is
\[8h \in \Chowtilde[6]{\Grass(5,2),\cO} \simeq \Chowct[6]{\Grass(5,2)}.\]
\end{thintro}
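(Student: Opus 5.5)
We argue in the setting of the degree 4 case. A smooth del Pezzo surface $S$ of degree 4 is a complete intersection of two quadrics $Q_1,Q_2$ in $\eP^4$. The lines of $S$ form the zero locus of the section $(s_1,s_2)$ of
\[\cE=\Symalg[2]{\cQ^\vee}\oplus\Symalg[2]{\cQ^\vee}\]
on $\Grass(5,2)$, where $s_i$ is the restriction of the quadric $Q_i$ to the line; here $\cQ$ is the rank 2 tautological bundle. This bundle has rank $3+3=6=\dim \Grass(5,2)$, as required.

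\textbf{Step 1: orientation data.} We compute $\det\cE$. Since $\det\Symalg[2]{\cQ^\vee}=(\det\cQ^\vee)^{\otimes 3}$, we get $\det \cE\simeq(\det\cQ^\vee)^{\otimes 6}$, which is a square. So $\Chowtilde[6]{\Grass(5,2),\det\cE}\simeq \Chowtilde[6]{\Grass(5,2),\cO}$ canonically up to the choice of square root.

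On the other hand, $\omega_{\Grass(5,2)}\simeq (\det \cQ^\vee)^{\otimes 5}$ up to sign conventions, and this is not a square. Hence $\cE$ is \emph{not} relatively orientable, and the pushforward to $\GW(\fF)$ is not available. We therefore work directly in the group $\Chowtilde[6]{\Grass(5,2),\cO}$.

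\textbf{Step 2: no quadratic information.} We show, by the same mechanism as Lemma \ref{PROP - chowtilde non orientable est chow}, that the forgetful map
\[\Chowtilde[6]{\Grass(5,2),\cO}\to\Chowct[6]{\Grass(5,2)}\]
is an isomorphism. The point is that the top-degree twist $\cO$ differs from $\omega$ by a non-square. Concretely, we use the cellular/Wendt-type description of Chow–Witt groups of Grassmannians: the $\Witt$-cohomology $H^6(\Grass(5,2),\fId^6,\cO)$ vanishes, since top-degree $\Witt$-cohomology is nonzero only for the twist $\omega$. Together with the fiber product/key exact sequence, this gives the isomorphism with $\Chowct[6]{}$, and it identifies even classes with $h$-multiples via Proposition \ref{PROP - foncteur H bien def indep torsion}.

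\textbf{Step 3: the Euler class.} We compute $c_6(\cE)=c_3(\Symalg[2]{\cQ^\vee})^2$ in $\Chowct[6]{\Grass(5,2)}\simeq\eZ$. By the classical splitting principle computation with roots $a,b$ of $\cQ^\vee$,
\[c_3(\Symalg[2]{\cQ^\vee})=2ab(a+b)=2\sigma_{2}\sigma_1\]
(up to Schubert conventions). Squaring and integrating gives $16$, the classical count of lines. Since $16$ is even, write it as $8\cdot 2$. Because $h$ maps to $2$ under the rank/forgetful map, the isomorphism of Step 2 sends $8h$ to $16$. Hence $e^{CW}(\cE)=8h$.

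\textbf{Main obstacle.} The main obstacle is Step 2: verifying the vanishing of the relevant $\fId^j$/Witt cohomology for this non-orientable twist. We would first try the known computation of $\Witt$-cohomology of real/even Grassmannians, namely the pattern of even Young diagrams with twists. If that is unavailable, we would use the long exact sequence for the projective bundle/Schubert stratification.
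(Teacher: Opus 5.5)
Your plan matches the paper's: non-orientable twist, then the fibre product of Proposition~\ref{PROP - CW comme image H et Witt}, then $c_6=16$ and $F\circ H=2$. However, the key claim of Step~2 is false as stated. Write $X=\Grass(5,2)$. In top degree the map $\rho\colon H^6(X,\fId^6(\cO))\to\Chd[6]{X}$ is surjective, because its cokernel injects into $H^7(X,\fId^7(\cO))=0$. Since $\Chd[6]{X}\simeq\eZ/2\eZ$, the group $H^6(X,\fId^6(\cO))=H^6(X,\sKW_6(\cO))$ cannot vanish. What does vanish for a non-orientable twist is the Witt-sheaf cohomology $H^6(X,\bfW(\cO))$, which is a different group; this is exactly the distinction between $\bfW$ and $\sKW$ that the paper points out. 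If you insert your vanishing into the fibre product, you get $0\times_{\eZ/2\eZ}\eZ=2\eZ$. That describes a forgetful map $\Chowtilde[6]{X,\cO}\to\Chowct[6]{X}$ that is injective but not surjective, which contradicts surjectivity in top degree. So Step~2 does not prove the isomorphism it claims.

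The correct input is $H^6(X,\sKW_6(\cO))\simeq\eZ/2\eZ$ with $\rho$ an isomorphism. The paper takes this from Wendt's computation, via Proposition~\ref{PROP - chowtilde non orientable est chow} and Theorem~\ref{THO - classe Euler non orientable}. The fibre product over $\eZ/2\eZ=\eZ/2\eZ$ then gives $\Chowtilde[6]{X,\cO}\simeq\eZ$. You can also reach this from the Witt-sheaf vanishing you had in mind. The sequence $0\to\fId^6\xrightarrow{\eta}\fId^5\to\fId^5/\fId^6\to 0$ yields an exact sequence $\Chd[5]{X}\xrightarrow{\beta}H^6(X,\fId^6(\cO))\to H^6(X,\bfW(\cO))\to 0$, using that in top degree $H^6(X,\fId^5(\cO))=H^6(X,\bfW(\cO))$. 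Hence $H^6(X,\fId^6(\cO))$ is a quotient of $\Chd[5]{X}\simeq\eZ/2\eZ$ that surjects onto $\Chd[6]{X}\simeq\eZ/2\eZ$, which forces $\rho$ to be an isomorphism. Only injectivity of the forgetful map (equivalently of $\rho$) is actually needed to get $e^{CW}=8h$ from $F(e^{CW})=16[pt]=F(8h)$.

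Separately, Step~3 has an arithmetic slip. $\operatorname{Sym}^2$ of a rank-$2$ bundle with Chern roots $a,b$ has roots $2a,\ a+b,\ 2b$, so $c_3=4ab(a+b)$, not $2ab(a+b)$. Your formula squares to $4$; only the corrected one gives $16$.
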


These results, and especially the description of the Chow-Witt groups in maximal codimension when the invertible sheaf is not orientable, indicate that no non-trivial enrichment of the enumerative problem can be found by these methods. The degree 2 and 4 del Pezzo surfaces are in fact special cases of a more general statement for Chow-Witt groups of projective bundles over Grassmannians. It is contained in the following key result:

\begin{thintro}[Theorem \ref{THO - classe Euler non orientable} below]
Let $X$ be the $d-$dimensional Grassmannian $\Grass(k,n)$. Assume either
\begin{itemize}
    \item $n$ is odd and $Y = \eP(\cF)$ is a projective bundle of odd rank $r$ over $X$ or
    \item $Y = \eP(\cF)$ is a projective bundle of even rank $r$ over $X$.
\end{itemize}
Let $\cE$ be a coherent sheaf, locally free of rank $d+r$ over $Y$. Assume $\cE$ is not orientable over the base field $\fF$. 

Then $e^{CW}(\cE)$ is uniquely determined by its value in the Chow group.

Furthermore, if $\deg c_{r+d}(\cE)$ is even, one can write 
\[e^{CW}(\cE) = \frac{\deg c_{r+d}(\cE)}{2}h \in \Chowtilde[r+d]{Y,\det \cE}\simeq \Chowct[r+d]{Y}.\]
\end{thintro}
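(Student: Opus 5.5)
The plan is to reduce the statement to two ingredients: the natural isomorphism $\Chowtilde[r+d]{Y,\det\cE}\isomr\Chowct[r+d]{Y}$ from Lemma \ref{PROP - chowtilde non orientable est chow}, and the identity $h\cdot\alpha$ describing lifts of even classes from Proposition \ref{PROP - foncteur H bien def indep torsion}.

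The first step is to establish the isomorphism in top codimension $r+d=\dim Y$, when $\det\cE$ is not of the form $\omega_Y\otimes\cL^{\otimes 2}$. Here I would use the key exact sequence of Chow--Witt theory:
\[\Chowct[r+d]{Y}\xrightarrow{h}\Chowtilde[r+d]{Y,\det\cE}\to H^{r+d}(Y,\fId^{r+d},\det\cE)\to 0 .\]
I would compute $H^{r+d}(Y,\fId^{r+d},\det\cE)$, or equivalently the top $\Witt$-cohomology twisted by $\det\cE$, via the projective bundle formula for $\Witt$-cohomology. That formula requires distinguishing $r$ even from $r$ odd.

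\begin{itemize}
\item For a projective bundle of even rank, the relative twist of $\omega_{Y/X}$ is the only one carrying top-degree classes.
\item For odd rank, one falls back to the twisted $\Witt$-cohomology of the base $X=\Grass(k,n)$. Its top-degree part in the non-canonical twist vanishes when $n$ is odd, by the known computations of Witt cohomology of Grassmannians via even Young diagrams.
\end{itemize}

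Since the Picard group of $Y$ is generated by $\cO_Y(1)$ and the pullback of $\cO_X(1)$, the non-orientability hypothesis leaves only finitely many twist classes modulo squares, namely those differing from $\omega_Y$. In each of these I expect the top twisted $\Witt$-group to vanish. This gives surjectivity of $h$ onto $\Chowtilde{}$. Combined with the forgetful map $\Chowtilde[r+d]{Y,\det\cE}\to\Chowct[r+d]{Y}$, whose composite with $h$ is multiplication by $2$, and the computation that $\Chowct[r+d]{Y}\simeq\eZ$ is torsion-free, this yields injectivity of the forgetful map. Hence the forgetful map is an isomorphism; this is where I would invoke Lemma \ref{PROP - chowtilde non orientable est chow} directly.

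The second step gives the first conclusion at once. Since the forgetful map sends $e^{CW}(\cE)$ to $c_{r+d}(\cE)$, injectivity shows that $e^{CW}(\cE)$ is determined by its Chow image.

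For the formula, suppose $\deg c_{r+d}(\cE)=2m$. Take $\alpha\in\Chowct[r+d]{Y}$ of degree $m$, for instance $m$ times the class of a rational point pulled back along a section. Then $h\alpha$ maps to $2\alpha=c_{r+d}(\cE)$ in $\Chowct{}$. By injectivity, $e^{CW}(\cE)=h\alpha$, which I write as $\frac{\deg c_{r+d}(\cE)}{2}h$. Proposition \ref{PROP - foncteur H bien def indep torsion} ensures that this expression is independent of the choices made, in particular of the twist and of any torsion.

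The main obstacle is the vanishing of the top twisted $\Witt$-cohomology in every non-orientable twist. This requires careful bookkeeping of $\omega_Y=\cO_Y(-r)\otimes p^\ast(\omega_X\otimes\det\cF^\vee)$, and of $\omega_X=\cO_X(-n)$, modulo squares. The parity hypotheses enter exactly here: $n$ odd makes $\omega_X$ the non-trivial class, and $r$ even or odd controls the $\cO_Y(1)$ component.
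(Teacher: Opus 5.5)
If you simply cite Proposition~\ref{PROP - chowtilde non orientable est chow}, as you say you would at the end of your first step, your remaining steps are exactly the paper's proof. Those steps are injectivity of the forgetful map $F$, then lifting $c_{r+d}(\cE)=2m[pt]$ to $h\cdot m[pt]$ via $F\circ H=\times 2$ (Proposition~\ref{PROP - fonction hyperbolique bien def torsion}).

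However, the argument you sketch for that isomorphism is wrong. The group $H^{r+d}(Y,\fId^{r+d}(\det\cE))$ in your exact sequence never vanishes here. Top-degree cohomology is right exact, so $\rho$ maps it onto $\Chd[r+d]{Y}\simeq\eZ/2\eZ$; the paper in fact computes it to be $\eZ/2\eZ$. You have identified it with the top cohomology of the Witt sheaf $\bfW(\det\cE)$. But the two Rost--Schmid complexes differ in codimension $r+d-1$: the terms are $\fId(\kappa(y))$ versus $\Witt(\kappa(y))$. This is precisely the distinction the paper warns about. Whatever happens to $H^{r+d}(Y,\bfW(\det\cE))$, it is not the group in your sequence. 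So $h$ is not surjective; its cokernel is $\eZ/2\eZ$.

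Your deduction is also internally inconsistent. If $h$ were surjective, then $F(\Chowtilde[r+d]{Y,\det\cE})=F(h(\Chowct[r+d]{Y}))=2\,\Chowct[r+d]{Y}$. So $F$ would be injective but not surjective: the class of a rational point would have no lift. This contradicts the isomorphism you then assert.

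What is really needed is that $\rho\colon H^{r+d}(Y,\fId^{r+d}(\det\cE))\to\Chd[r+d]{Y}$ is an isomorphism $\eZ/2\eZ\to\eZ/2\eZ$. Equivalently, $\eta\colon H^{r+d}(Y,\fId^{r+d+1}(\det\cE))\to H^{r+d}(Y,\fId^{r+d}(\det\cE))$ must be zero. Since $\Chowct[r+d]{Y}\simeq\eZ$ has no $2$-torsion, Proposition~\ref{PROP - CW comme image H et Witt} then gives $\Chowtilde[r+d]{Y,\det\cE}\simeq\eZ/2\eZ\times_{\eZ/2\eZ}\Chowct[r+d]{Y}\simeq\Chowct[r+d]{Y}$, with $h$ acting as multiplication by $2$.

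The paper obtains the value of the $\sKW$-group from Fasel's projective bundle theorem for $\sKW$-cohomology together with Wendt's computation for Grassmannians. A vanishing result for $\bfW$-cohomology would not suffice by itself. You would still have to show that the kernel of $H^{r+d}(Y,\fId^{r+d}(\det\cE))\to H^{r+d}(Y,\bfW(\det\cE))$ maps isomorphically onto $\Chd[r+d]{Y}$.

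Finally, fix your bookkeeping. Since $\dim Y=r+d$, you have $\operatorname{rk}\cF=r+1$, so $\omega_{Y/X}\simeq\cO_Y(-(r+1))\otimes p^\ast\det\cF$. Writing $\cO_Y(-r)$ flips exactly the parity that decides which twists are non-orientable.
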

In this result, we emphasise on the isomorphism between Chow and Chow-Witt groups. It is the reason why we say that there is no enrichment in the non orientable case.

\subsection*{Layout of the paper}

An important part of the paper is section \ref{SEC - motivic setup}, consisting of reminders about the motivic setup. There, we only state the definitions and properties we need to define the Euler class in cohomology, which is the main goal of the section. At the end, we clarify some terminology about orientations and especially the difference between oriented theories and relative orientability of vector bundles.

In section \ref{SEC - calcul classe Euler}, the goal is to define the useful relations and operators between $K-$theories to have maps between the associated cohomologies. We obtain a trivialisation of the Chow-Witt cohomology for a type of non-orientable projective bundle over the Grassmannian and, in particular, a way to define cycles without the data of orientations. It is synthesised in Theorem \ref{THO - classe Euler non orientable}.

The last two parts are the geometrical constructions we need to express the enumerative problem of lines in a smooth del Pezzo of degree 2 and 4 as the zero locus of sections of locally free sheaves over some smooth and proper scheme.

\subsection*{Notations and conventions}

In the whole paper $\fF$ will refer to a perfect field of characteristic different from $2.$ Straight letters will be used to describe schemes and vector bundles, curvy letters will be reserved for sheaves. If $X$ is a scheme, $\Omega_X$ stands for its sheaf of Kähler differentials and $\omega_X = \det \Omega_X.$

To define a vector bundle from a locally free coherent sheaf $\cE$ of finite rank, we will stick to EGA's convention $E = \eV(\cE) = \Spec{\Symalg{\cE}}$ and $\eP(\cE) = \Proj{\Symalg{\cE}}.$ In particular, there will sometime be a dualization compared to cited results using the other convention.

\subsection*{Acknowledgments}
The author would like to thank his PhD advisors Adrien Dubouloz and Jan Nagel for their full support, help and the opportunity to accomplish this work. The author is also very grateful to Pedro Montero for pointing out the geometric construction for the degree two del Pezzo surfaces.

\addtocontents{toc}{\protect\setcounter{tocdepth}{2}}%a partir d'ici, la table des matieres montre jusqu'aux sous-sections
\section{Motivic setup}\label{SEC - motivic setup}
This first "reminder" part is mostly taken from \cite{DJK21}. We slightly adapt the notations.

\subsection{Stable homotopy and six functors}
In the following, the base schemes will be quasi-compact and quasi-separated and we call \emph{s-morphism}\index{s-morphisme} a separated morphism of finite type.

Let $S$ be a quasi-compact and quasi-separated noetherian scheme of finite dimension. Then we write $\StableHo{S}$ the stable $\infty-$category of motivic spectra.

The unit for $\otimes$ in $\StableHo{S}$ is noted $\eS_S$ and is the stabilisation of $\eUn_S$.

\begin{prop}[\cite{Ayoub_6foncteurs}]
For every morphism $f: T \to S$, we get a pair of adjoint functors:
\[\xymatrix{f^\ast: \StableHo{S} \ar@<+2pt>[r] & \StableHo{T}: f_\ast, \ar@<+2pt>[l]}\]
called inverse image (by left Kan extension) and direct image. Moreover, if $f$ is a s-morphism, then we get another adjoint pair:
\[\xymatrix{f_!: \StableHo{T} \ar@<+2pt>[r] & \StableHo{S}: f^!, \ar@<+2pt>[l]}\]
called direct (and inverse) exceptional image.
\end{prop}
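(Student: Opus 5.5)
The plan is to build everything from the unstable motivic homotopy category and to get the right adjoints formally from presentability. For a morphism $f\colon T\to S$, I would first define $f^\ast$ on presheaves of spaces on $\mathrm{Sm}_S$ as the left Kan extension of the functor $\mathrm{Sm}_S\to\mathrm{Sm}_T$, $X\mapsto X\times_S T$. This functor preserves Nisnevich coverings and $\mathbb{A}^1$-projections, so $f^\ast$ descends to the localised $\infty$-categories of motivic spaces. It commutes with $\mathbb{P}^1$-suspension, hence it extends to $\StableHo{S}\to\StableHo{T}$, which is the stabilisation with respect to $\mathbb{P}^1$. The resulting functor preserves colimits, and the categories involved are presentable. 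The adjoint functor theorem therefore produces the right adjoint $f_\ast$. This gives the first adjoint pair.

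For an s-morphism $f$, I would treat two basic cases separately.
\begin{enumerate}
\item If $j$ is an open immersion, or more generally a smooth morphism, then $j^\ast$ has a left adjoint $j_\sharp$, induced by composing with $j$ on smooth schemes.
\item If $p$ is proper, I would set $p_!:=p_\ast$.
\end{enumerate}
For a general s-morphism, I would choose a Nagata compactification $f=p\circ j$, with $j$ an open immersion and $p$ proper, and define $f_!:=p_\ast j_\sharp$. The right adjoint $f^!$ then comes again from the adjoint functor theorem once we know $f_!$ preserves colimits. This holds because $j_\sharp$ is a left adjoint and, for proper $p$, $p_\ast$ preserves colimits. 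The latter can be seen from compact generation of $\StableHo{T}$ together with the fact that $p_\ast$ preserves compact objects, or deduced from the localisation sequences.

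The main obstacle will be showing that $f_!$ is independent of the chosen compactification and is (homotopy-coherently) functorial, i.e.\ $(gf)_!\simeq g_!f_!$. The key input here is the support property: for a proper $p$ and an open immersion $j$ fitting in a cartesian square, one needs the exchange map $j_\sharp p'_\ast\to p_\ast j'_\sharp$ to be an equivalence. I would derive this from the localisation theorem (the cofibre sequence $j_\sharp j^\ast\to\mathrm{Id}\to i_\ast i^\ast$ for a closed complement $i$) together with proper base change. Proper base change itself is reduced, via Chow's lemma and Nisnevich descent, to the case of projective spaces, where it follows from $\mathbb{P}^1$-stability.

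With these exchange equivalences in hand, any two compactifications can be dominated by a third, and the comparisons glue. To organise the higher coherences, I would follow Ayoub's cross-functor formalism, or its $\infty$-categorical version via Liu--Zheng/Gaitsgory--Rozenblyum style extension from the two classes of morphisms. This would complete the construction of the adjoint pair $(f_!,f^!)$.
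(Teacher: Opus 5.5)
The paper gives no proof of this statement; it only cites Ayoub. Your outline is the standard construction behind that citation: $f^\ast$ by left Kan extension and $\mathbb{P}^1$-stabilisation, with $f_\ast$ from the adjoint functor theorem; then $f_!=p_\ast j_\sharp$ for a Nagata compactification, made independent of the compactification by the support property (from localisation, smooth base change and base change along the closed complement), and glued via cross-functors or their $\infty$-categorical enhancements. Ayoub's own treatment is for quasi-projective morphisms. Your use of Nagata compactification and Chow's lemma is what is needed to reach arbitrary s-morphisms, which is the extension carried out by Cisinski--D\'eglise.

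\textbf{One step is justified incorrectly.} You argue that $p_\ast$ commutes with colimits from compact generation together with the fact that $p_\ast$ preserves compact objects. That does not follow: a functor preserving compact objects need not commute with filtered colimits. Those two facts are instead the criterion for the right adjoint $p^!$ to commute with colimits, once $p_\ast$ is already known to be a left adjoint.

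The correct argument goes through the left adjoint $p^\ast$:
\begin{itemize}
\item $p^\ast$ sends the compact generators of $\StableHo{S}$ (the objects $\Sigma^\infty_{\mathbb{P}^1}X_+$ for $X$ smooth over $S$, and their $\mathbb{P}^1$-desuspensions) to compact objects.
\item Hence $\operatorname{Map}(C,p_\ast(-))\simeq\operatorname{Map}(p^\ast C,-)$ commutes with filtered colimits for every compact $C$.
\item Since $\StableHo{S}$ is compactly generated and $p_\ast$ is exact, $p_\ast$ preserves all colimits.
\end{itemize}
This works for every morphism, not only proper ones.

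\textbf{A smaller imprecision.} In passing from projective to proper base change, Chow's lemma plus descent on the base is not sufficient on its own. The locus where the Chow modification $X'\to X$ is not an isomorphism must be handled by localisation and noetherian induction, combined with the support property for the projective morphism $X'\to X$. Closed immersions are handled by localisation, and $\mathbb{P}^n_S\to S$ by $\mathbb{P}^1$-stability.

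Both points are repairs inside your strategy, not changes to it.
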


\begin{tho}[Six functors on $\StableHo{S}$, \cite{Ayoub_6foncteurs}]\label{THO - formalisme six foncteurs SH}\index{Six foncteurs sur $\StableHo{S}$}
The six operations $(\otimes,\underline{\operatorname{Hom}},f^\ast,f_\ast,f_!,f^!)$ satisfy to the six functors formalism:
\begin{enumerate}
\item For every morphism $f$, $f^\ast$ are symmetric monoidal.
\item There is a natural transformation $f_! \to f_\ast$ invertible when $f$ is proper.
\item There is an invertible natural transformation $f^\ast \to f^!$ when $f$ is an open immersion.
\item There is a canonical isomorphism
\[\eE \otimes f_!(\eF) \to f_!(f^\ast(\eE)\otimes \eF)\]
for all s-morphisms $f: T \to S$ and $\eE$ in $\StableHo{S}$, $\eF$ in $\StableHo{T}$.
\item For every Cartesian square
\[\xymatrix{T' \ar[r]^g \ar[d]_q & S' \ar[d]^p \\ T\ar[r]_f & S,}\]
if $f$ and $g$ are s-morphisms, then we get canonical isomorphisms:
\[p^\ast f_! \to g_!q^\ast, \qquad q_\ast g^! \to f^!p_\ast.\]
\end{enumerate}
\end{tho}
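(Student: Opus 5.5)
The plan is to build the formalism in layers, following Ayoub's and Cisinski–Déglise's approach, and to reduce every claim to two geometric inputs: smooth base change and the localization property. For an arbitrary morphism $f: T\to S$, I define $f^\ast$ on the unstable level by left Kan extension of the pullback functor $\mathrm{Sm}_S \to \mathrm{Sm}_T$, $X \mapsto X\times_S T$. I then pass to $\eA^1$-localization and $\eP^1$-stabilization; both are compatible with $f^\ast$ because pullback preserves $\eA^1$-equivalences and Tate spheres. Since $f^\ast$ preserves colimits between presentable stable $\infty$-categories, the adjoint functor theorem gives the right adjoint $f_\ast$. Symmetric monoidality of $f^\ast$ (item (1)) is checked on generators $\Sigma^\infty_+ X$, where it reduces to $(X\times_S Y)\times_S T \simeq (X\times_S T)\times_T (Y\times_S T)$.

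For smooth $f$, $f^\ast$ also has a left adjoint $f_\sharp$, given by forgetting down to $S$. I next prove smooth base change and the smooth projection formula directly on generators. The key input is then the localization theorem: for a closed immersion $i: Z\to S$ with open complement $j$, there is a cofiber sequence $j_\sharp j^\ast \to \mathrm{Id} \to i_\ast i^\ast$ and $i_\ast$ is fully faithful. This is the Morel–Voevodsky gluing argument, and I will assume it as the standard input. From localization and smooth base change, I deduce base change for closed immersions against arbitrary maps. I also prove the ambidexterity/purity statement for smooth proper maps: $p_\ast \simeq p_\sharp \Sigma^{-T_p}$, via deformation to the normal bundle and the Thom space $\operatorname{Th}(T_p)$. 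Together these yield proper base change $p^\ast f_\ast \simeq g_\ast q^\ast$ for projective $f$, first for $\eP^n_S\to S$ and then by combining with closed immersions.

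Then I define the exceptional functors. For an s-morphism $f$, Nagata's theorem gives a factorization $f = p\circ j$ with $j$ an open immersion and $p$ proper, and I set $f_! := p_\ast j_\sharp$, with $f^!$ its right adjoint. Item (3) is immediate, since for $f=j$ we can take $p=\Id$, so that $j_!=j_\sharp$ and $j^!=j^\ast$. Item (2) comes from the exchange map $j_\sharp \to j_\ast$, which gives $p_\ast j_\sharp\to p_\ast j_\ast$; this is an isomorphism when $f$ is proper because then one may take $j=\Id$. Items (4) and (5) follow by combining the smooth projection formula and smooth base change for $j_\sharp$ with proper base change and the proper projection formula for $p_\ast$. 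The latter is proved by the same reduction to $\eP^n$ and closed immersions. The second isomorphism in (5) is then obtained by adjunction from the first.

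The main obstacle is showing that $f_!$ is independent of the compactification and functorial, i.e.\ $(gf)_!\simeq g_! f_!$ coherently. I would handle this by showing that any two compactifications are dominated by a third, and that for a proper map $q$ restricting to an isomorphism over the open locus, $q_\ast j'_\sharp \simeq j_\sharp$. That identity follows from the support property: proper base change applied to the square formed by $j$ and $q$, together with localization. Ensuring the higher coherences in the $\infty$-categorical setting requires the Liu–Zheng or Gaitsgory–Rozenblyum machinery of extending from a category of correspondences. Here I would rely on Ayoub's $2$-functor construction, upgraded as in \cite{DJK21}, rather than redoing it.
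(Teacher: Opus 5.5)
The paper gives no proof of this theorem; it is quoted from Ayoub. So the only comparison is with the construction in the literature, and your outline (localization, purity for $\mathbb{P}^n_S$, projective base change, compactification, coherences imported from Ayoub and D\'eglise--Jin--Khan) is that construction. There is, however, one genuinely missing step.

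You obtain base change, and with it the projection formula and the support property, only for \emph{projective} morphisms, by factoring through a closed immersion into $\mathbb{P}^n_S$. But the statement is about arbitrary s-morphisms. Nagata's theorem only gives $f=p\circ j$ with $p$ proper, and the map $q$ comparing two compactifications (via the closure of $X$ in $\bar X\times_S\bar X'$) is also only proper. Every later step uses base change and the projection formula for proper, possibly non-projective, maps:
\begin{enumerate}
\item the identification $q_\ast j'_\sharp\simeq j_\sharp$;
\item the independence of $f_!$, and of $f_!\to f_\ast$, from the compactification;
\item items (4) and (5) for $f_!=p_\ast j_\sharp$.
\end{enumerate}
Nothing in your argument provides these for proper maps.

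Ayoub sidesteps this by working only with quasi-projective morphisms, which admit projective compactifications. The version for all s-morphisms is due to Cisinski--D\'eglise and needs Chow's lemma. For proper $f\colon X\to S$, choose a projective $\pi\colon X'\to X$ with $f\pi$ projective, which is an isomorphism over a dense open $j\colon U\to X$; let $i\colon Z\to X$ be the closed complement. Localization on $X$ splits $f_\ast$ into two pieces:
\begin{itemize}
\item $f_\ast i_\ast=(fi)_\ast$, handled by noetherian induction;
\item $f_\ast j_\sharp\simeq(f\pi)_\ast j'_\sharp$, handled by the support property for $\pi$ and the projective case.
\end{itemize}
Adding this step closes the gap. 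Alternatively, restrict to quasi-projective morphisms, which is all the paper needs since its schemes are smooth projective over $\fF$.

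A smaller point: you assert ambidexterity $p_\ast\simeq p_\sharp\Sigma^{-T_p}$ for every smooth proper $p$ before proper base change is available. Deformation to the normal bundle gives purity for closed immersions. The argument available at that stage yields ambidexterity for $\mathbb{P}^n_S\to S$ and then for smooth projective maps, which is all you actually use. For general smooth proper $p$, ambidexterity is a consequence of the finished formalism ($p_!\simeq p_\ast$ together with $p^!\simeq\Sigma^{T_p}p^\ast$), not an input.
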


\begin{deff}[Suspension by a locally free sheaf]\label{DEF - suspension par faisceau localement libre}
To a locally free sheaf $\cE$ of finite rank over $S$, we can associate an auto-equivalence 
\[\Sigma^\cE : \StableHo{S} \to \StableHo{S}\]
called the suspension by $\cE.$
\end{deff}

This leads to the generalisation
\begin{prop}[Thom space of a locally free sheaf]\label{PROP - def foncteur Thom}
Let us define $\operatorname{Th}$ the functor which, to $\cE$ a locally free of finite rank sheaf over $S$, associates $\Thom[S]{\cE} := \Sigma^\cE(\eS_S)\in \Picard{\StableHo{S}}.$ It can be naturally extended as a morphism of $\cE_\infty$-groups:
\[\operatorname{Th}: K(-) \to \Picard{\StableHo{-}}.\]
\end{prop}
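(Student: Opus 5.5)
The plan is to construct $\operatorname{Th}$ first on the groupoid of locally free sheaves and then extend it to $K$-theory by a universal property. Concretely, for a locally free sheaf $\cE$ of finite rank on $S$ with associated vector bundle $p: E=\eV(\cE)\to S$ and zero section $s$, I would take $\Sigma^\cE := p_\sharp s_\ast$. This is equivalently the cofiber of $p_\sharp p^\ast \to p_\sharp p^\ast$ along $E\setminus 0\subset E$. Then $\Thom[S]{\cE} = \Sigma^\cE(\eS_S)$.

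The argument would proceed in the following steps.

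\emph{Invertibility.} For trivial $\cE=\cO_S^n$ we have $\Thom[S]{\cO^n}\simeq (\eP^1,\infty)^{\wedge n}$, which is invertible in $\StableHo{S}$ by the very definition of stabilisation. In general $\cE$ is Zariski-locally trivial. Invertibility of an object of $\StableHo{S}$ can be checked Zariski-locally, because $\StableHo{-}$ satisfies Nisnevich descent and the dual candidate $\underline{\operatorname{Hom}}(\Thom[S]{\cE},\eS_S)$ is compatible with restriction to opens. Since $j^\ast\Sigma^\cE\simeq\Sigma^{j^\ast\cE}j^\ast$ by smooth base change, $\Thom[S]{\cE}\in\Picard{\StableHo{S}}$.

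\emph{Additivity on short exact sequences.} Given $0\to\cE'\to\cE\to\cE''\to 0$, I want a natural equivalence $\Thom{\cE}\simeq\Thom{\cE'}\otimes\Thom{\cE''}$. In the split case this is the Künneth-type formula $p_\sharp s_\ast$ for a product bundle, which follows from the projection formula and base change in Theorem \ref{THO - formalisme six foncteurs SH}. The general case reduces to the split case by an $\eA^1$-deformation. Rescaling the extension class by $t\in\eA^1$ gives a bundle on $\eA^1_S$ whose fibre at $1$ is $\cE$ and at $0$ is $\cE'\oplus\cE''$. Homotopy invariance of $\StableHo{-}$ then identifies the two Thom spectra. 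One must check that the resulting identification does not depend on the choices, which I would do by running the same deformation one dimension higher.

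\emph{Extension to $K(-)$.} With invertibility and additivity in hand, $\operatorname{Th}$ is a functor from the groupoid of vector bundles, sending exact sequences to tensor products, into the $\cE_\infty$-group $\Picard{\StableHo{S}}^{\simeq}$, naturally in $S$ via $f^\ast$. I would invoke the universal property of $K$-theory. On affine schemes, $K$ is the group completion of the additive $\infty$-groupoid of vector bundles, and every exact sequence splits there. Both sides are Zariski sheaves, since $\Picard{\StableHo{-}}$ satisfies descent. Hence the map extends uniquely from affines to all of $S$ by Kan extension and sheafification, giving $\operatorname{Th}: K(-)\to\Picard{\StableHo{-}}$ as a map of sheaves of $\cE_\infty$-groups.

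The main obstacle is the coherence in the last step. Producing a genuine $\cE_\infty$-map requires the equivalences $\Thom{\cE\oplus\cF}\simeq\Thom{\cE}\otimes\Thom{\cF}$ to be coherently symmetric monoidal, not merely pointwise. To get this I would realise $\Sigma^{(-)}$ as a symmetric monoidal functor out of the groupoid of vector bundles under $\oplus$, using the functoriality of $p_\sharp s_\ast$ in the bundle. I would then apply the group completion universal property on affines. For the full coherence I would rely on the treatment in \cite{DJK21} (following Bachmann--Hoyois) rather than rederiving it.
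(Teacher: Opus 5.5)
The paper gives no proof of this statement. It is recalled, along with the rest of the section, from \cite{DJK21}, which relies on the Bachmann--Hoyois motivic $J$-homomorphism. Your outline is exactly that construction: Thom spectra as a symmetric monoidal functor from vector bundles to the Picard $\infty$-groupoid, group completion on affines (where $K$ is the group completion of vector bundles), then Zariski descent, with coherence deferred to that source as the paper implicitly does.

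One correction, in your non-split additivity step: $\mathbb{A}^1$-invariance alone does not identify the fibres over $0$ and $1$ of an arbitrary object of $\mathrm{SH}(\mathbb{A}^1_S)$. You need the deformed extension to split Zariski-locally on $S$, so that its Thom spectrum lies in the essential image of the fully faithful $\pi^\ast$; alternatively, use the affine bundle of splittings. That step is not needed anyway, since on affines every short exact sequence of vector bundles splits and descent handles the rest.
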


\begin{rmq}
Rather than considering vector bundles, when possible we chose to keep the suspension by Thom spaces of locally free sheaves. Thus, we replace the $K-$theory of vector bundles by the one of locally free sheaves (Grothendieck $K_0$ group).
\end{rmq}

\begin{rmq}
Let $\cE'\to \cE \to \cE''$ be an exact triangle of perfect complexes (short exact sequence of vector bundles). It canonically gives rise to an isomorphism $\qscal{\cE} \simeq \qscal{\cE'} + \qscal{\cE''}$ in $K_0(S)$ with $\qscal{\cE}$ the class of $\cE$ in $K_0(S)$ (we will most of the time just write $\cE$). We then get $\Thom[S]{\cE} \simeq \Thom[S]{\cE'}\otimes\Thom[S]{\cE''}$ in $\StableHo{S}$.
\end{rmq}

\begin{tho}[Homotopic purity]\label{THO - purete version six foncteurs}
If $f$ is a smooth s-morphism, there exists a canonical isomorphism of functors 
\[\fp_f: \Sigma^{\cT_f}f^\ast \to f^!\]
with $\cT_f$ the relative tangent sheaf of $f$.
\end{tho}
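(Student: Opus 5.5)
The homotopic purity statement (Theorem \ref{THO - purete version six foncteurs}) is the Ayoub/Déglise–Jin–Khan purity isomorphism. I would prove it by first building a natural transformation $\Sigma^{\cT_f}f^\ast \to f^!$ for smooth $f$, and then checking that it is invertible locally on $T$. The construction goes through the diagonal. Write $f: T\to S$ smooth and consider the Cartesian square
\[\xymatrix{T\times_S T \ar[r]^{p_2} \ar[d]_{p_1} & T \ar[d]^f \\ T \ar[r]_f & S.}\]
The diagonal $\Delta: T \to T\times_S T$ is a closed immersion, since $f$ is separated. Its normal sheaf is canonically $\cT_f$: the conormal sheaf of $\Delta$ is $\Omega_{T/S}$, and in the paper's EGA convention this dualization has to be tracked carefully.

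\textbf{Step 1: closed immersions.} For the closed immersion $\Delta$ I would use the localization sequence $j_!j^\ast \to \Id \to \Delta_\ast\Delta^\ast$ together with the deformation to the normal cone. This identifies $\Delta^!$, up to the twist, with the Thom-type functor attached to the normal bundle. Concretely, I would produce a canonical map $\Delta^\ast \to \Sigma^{\cT_f}\Delta^!$, up to the sign convention for the twist. I would then show it is an equivalence after pullback along $p_1$. This is the Morel–Voevodsky homotopy purity theorem in its six-functor form.

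\textbf{Step 2: the transformation.} Next comes the chain
\[\Sigma^{\cT_f}f^\ast \simeq \Sigma^{\cT_f}\Delta^! p_2^! f^\ast \ldots\]
It is cleaner to write it as follows. Base change (item 5 of Theorem \ref{THO - formalisme six foncteurs SH}) gives $p_{1\ast}p_2^! \simeq f^!f_\ast$, and its adjoint form gives $f^\ast f_! \simeq p_{1!}p_2^\ast$. Combined with $p_1\Delta = \Id$ and the identification $\Delta^! p_1^! \simeq \Id$, this yields
\[f^! \simeq \Delta^\ast p_1^\ast f^! \to \Delta^\ast p_2^! f^\ast \ldots\]
The standard route is Ayoub's: define the ``ambidexterity'' map $\Sigma^{\cT_f}f^\ast \to f^!$ as the adjoint of a map $f_!\Sigma^{\cT_f}f^\ast \to \Id$. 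That map is obtained by composing the purity equivalence for $\Delta$ with the counit $p_{2!}p_2^! \to \Id$ transported along base change. This step is routine but requires coherence bookkeeping.

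\textbf{Step 3: invertibility.} Checking that the map is an equivalence is local on $T$ and $S$, because $f^!$ and $f^\ast$ commute with restriction to opens (item 3). Since $f$ is smooth, locally it factors as an étale map followed by $\mathbb{A}^n_S \to S$.

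For étale $f$, we have $\cT_f=0$ and $\Delta$ is an open-and-closed immersion, so the map reduces to $f^\ast\simeq f^!$. For the affine space projection, the map reduces to $\Sigma^{n}$-periodicity: $\pi_!\pi^\ast \simeq \Sigma^{-\cO^n}$ up to twist, by $\mathbb{A}^1$-invariance plus the $\eP^1$-stability built into $\StableHo{-}$.

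The two cases are then glued using compatibility of the construction with composition. That compatibility follows from the multiplicativity of Thom spaces in short exact sequences $0\to\cT_g\to\cT_{fg}\to g^\ast\cT_f\to 0$ (the remark after Proposition \ref{PROP - def foncteur Thom}).

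\textbf{Main obstacle.} The hard part is Step 1 combined with the compatibility with composition. Deformation to the normal cone has to be carried out functorially and stably, and the resulting isomorphisms must match the canonical ones from $\operatorname{Th}: K(-)\to \Picard{\StableHo{-}}$. In the paper's context I would simply invoke \cite{Ayoub_6foncteurs} and \cite{DJK21} for this and present only the reduction above.
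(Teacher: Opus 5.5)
\textbf{Verdict: genuine gap.} The paper gives no proof of this statement. It is recalled as a black box from Ayoub's six-functor formalism (as used in \cite{DJK21}), so your closing decision to cite \cite{Ayoub_6foncteurs} and \cite{DJK21} is exactly the paper's treatment. The problem is the ``reduction above'' that you propose to present alongside the citation: it does not prove invertibility, and it breaks down at the hard part of the theorem.

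In Step 3, the affine-space case is not settled by $\pi_!\pi^\ast\simeq\Sigma^{-\cO^n}$. For $\pi:\mathbb{A}^n_S\to S$, the map $\Sigma^{\cO^n}\pi^\ast E\to\pi^!E$ lives in $\StableHo{\mathbb{A}^n_S}$ and must be tested against all objects there. Equivalently, its mate $\pi_!\to\pi_\sharp\Sigma^{-\cO^n}$ must be invertible on all of $\StableHo{\mathbb{A}^n_S}$. Your computation only sees objects of the form $\pi^\ast F$, and these do not generate. Indeed, $\pi^\ast$ is fully faithful but not essentially surjective: for $j:U=\mathbb{A}^n_S\smallsetminus 0\hookrightarrow\mathbb{A}^n_S$, the object $j_!\eS_U$ is nonzero but restricts to zero on the zero section $i$. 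Since $i^\ast\pi^\ast F\simeq F$, it is not of the form $\pi^\ast F$, and a priori $\pi^!E$ need not lie in the image of $\pi^\ast$ either. Moreover, an abstract isomorphism between the two sides says nothing about the specific map from Step 2. In the literature this is handled by reducing to the smooth proper case, ultimately projective spaces $\eP^n_S\to S$, where $\pi_!=\pi_\ast$, and proving that $\pi_\sharp\to\pi_\ast\Sigma^{\cT_\pi}$ is invertible. That is where $\eP^1$-stability enters essentially, and it, rather than compatibility with composition, is the real obstacle.

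Two smaller points:
\begin{enumerate}
\item The étale case is not free. The six-functor axioms recalled in the paper give $f^\ast\simeq f^!$ only for open immersions, and the paper itself derives the étale case as a corollary of purity. Using it inside the proof is therefore circular unless you supply an independent argument.
\item Step 2 never fixes a single transformation. The chain you write, completed with the Step 1 map $\Delta^\ast\to\Sigma^{\cT_f}\Delta^!$ and $\Delta^!p_2^!=\Id$, gives $f^!\to\Sigma^{\cT_f}f^\ast$, which is opposite to $\fp_f$. The ``ambidexterity'' description, meanwhile, is not a definition. The direction is harmless once invertibility is known, but Step 3 has to be run on one specific map.
\end{enumerate}
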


\begin{cor}
If $f$ is an étale s-morphism, then $\fp_f$ becomes an isomorphism $f^\ast \simeq f^!$ (from the natural transformation already present with the six functors \ref{THO - formalisme six foncteurs SH}).
\end{cor}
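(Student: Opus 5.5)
The plan is to obtain the corollary by specialising Theorem \ref{THO - purete version six foncteurs} to an étale $f$. The only thing to check is that the Thom twist $\Sigma^{\cT_f}$ becomes the identity. After that, I will identify the resulting isomorphism with the one coming from the six functors formalism.

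\emph{Step 1.} For $f$ étale, $\Omega_f = 0$, so the relative tangent sheaf $\cT_f$ is the zero sheaf. Its class in $K_0$ is $0$. Since $\operatorname{Th}$ is a morphism of $\cE_\infty$-groups (Proposition \ref{PROP - def foncteur Thom}), it sends $0$ to the unit. Hence $\Thom{\cT_f} \simeq \eS$ canonically, and $\Sigma^{\cT_f} \simeq \Id$. Composing this with $\fp_f$ gives an isomorphism $f^\ast \simeq f^!$.

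\emph{Step 2.} The real content is that this isomorphism agrees with the natural transformation $f^\ast \to f^!$ already supplied by the formalism. For an open immersion this is point (3) of Theorem \ref{THO - formalisme six foncteurs SH}. For a general étale separated morphism, I would use the diagonal. Since $f$ is étale and separated, $\Delta_f : T \to T\times_S T$ is an open and closed immersion. The base-change map $p_2^\ast f^! \to p_1^! f^\ast$ restricted along $\Delta_f$ therefore produces $f^\ast \to f^!$, which is the standard construction in \cite{Ayoub_6foncteurs} and \cite{DJK21}.

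\emph{Step 3.} I would then check compatibility using how $\fp_f$ is built in \cite{DJK21}. There, purity is built exactly from this diagonal deformation-to-the-normal-cone construction. When the normal bundle of $\Delta_f$ is zero, that is, when $\Delta_f$ is open, the deformation space is trivial and the construction reduces to the open-immersion identification $\Delta_f^\ast = \Delta_f^!$. So $\fp_f$ composed with $\Sigma^0 \simeq \Id$ is the transformation of Step 2.

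The main obstacle is this last compatibility check, which requires unwinding the definition of $\fp_f$. I would simply cite the statement from \cite[Section 2]{DJK21} rather than reprove it, since the corollary is stated as a recall.
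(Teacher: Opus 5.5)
Your Step~1 is exactly the argument the paper has in mind; it gives no proof beyond this. For étale $f$ one has $\Omega_f=0$, hence $\cT_f=0$ and $\Sigma^{\cT_f}\simeq\Sigma^{0}\simeq\Id$, so the purity isomorphism $\fp_f$ becomes $f^\ast\simeq f^!$.

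Steps~2--3 go beyond the paper. Its six-functor list only supplies a transformation $f^\ast\to f^!$ for open immersions, so some diagonal construction like yours is needed to give the parenthetical a meaning for general étale $f$. There is, however, a direction slip in Step~2. Since $\Delta_f^\ast p_2^\ast=(p_2\Delta_f)^\ast=\Id$ and $\Delta_f^\ast p_1^!\simeq\Delta_f^!p_1^!=(p_1\Delta_f)^!=\Id$, pulling back the exchange map $p_2^\ast f^!\to p_1^!f^\ast$ along $\Delta_f$ gives a map $f^!\to f^\ast$, not $f^\ast\to f^!$. The compatibility to check is therefore that this map is inverse to $\fp_f$. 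This does not affect the corollary itself, which your Step~1 already proves.
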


\begin{cor}
From $\fp_f,$ one recovers the left adjoint of $f^\ast$. It is then of the form $f_\sharp \simeq f_!\Sigma^{\cT_f}$. 
\end{cor}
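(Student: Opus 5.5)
Since $f_\sharp$ is by definition the left adjoint of $f^\ast$ (which exists for smooth $f$), the plan is to produce a left adjoint of $f^\ast$ out of the six functors formalism and homotopic purity (Theorem \ref{THO - purete version six foncteurs}), and then invoke uniqueness of adjoints. Set $G := f_!\Sigma^{\cT_f}$. We want $G \dashv f^\ast$, i.e. natural equivalences
\[\Maps{f_!\Sigma^{\cT_f}\eE,\eF} \simeq \Maps{\eE, f^\ast\eF}\]
for $\eE \in \StableHo{T}$ and $\eF\in\StableHo{S}$.

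First, by the adjunction $f_! \dashv f^!$ (which exists because $f$ is an s-morphism),
\[\Maps{f_!\Sigma^{\cT_f}\eE,\eF}\simeq \Maps{\Sigma^{\cT_f}\eE, f^!\eF}.\]
Next, Theorem \ref{THO - purete version six foncteurs} gives an isomorphism $f^!\eF \simeq \Sigma^{\cT_f}f^\ast\eF$, natural in $\eF$, so the right-hand side becomes $\Maps{\Sigma^{\cT_f}\eE,\Sigma^{\cT_f}f^\ast\eF}$. By Definition \ref{DEF - suspension par faisceau localement libre}, $\Sigma^{\cT_f}$ is an auto-equivalence, hence fully faithful, and this is $\Maps{\eE,f^\ast\eF}$. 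Every identification is natural in both variables, so $f_!\Sigma^{\cT_f}$ is a left adjoint of $f^\ast$, and uniqueness of adjoints up to canonical isomorphism gives $f_\sharp\simeq f_!\Sigma^{\cT_f}$.

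The step needing care is naturality. The purity isomorphism $\fp_f$ must be natural in $\eF$, which holds because it is an isomorphism of functors. Full faithfulness of $\Sigma^{\cT_f}$ must hold at the level of mapping spaces, which holds because it is an equivalence of $\infty$-categories. No other input is needed. As a consistency check, in the étale case $\cT_f=0$ and the formula reduces to $f_\sharp\simeq f_!$, which matches the preceding corollary.
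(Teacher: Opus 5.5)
Your proof is correct and follows the intended route: the paper gives no separate argument, since the corollary is the formal consequence of the adjunction $f_!\dashv f^!$, the purity isomorphism $\Sigma^{\cT_f}f^\ast\simeq f^!$, and the invertibility of $\Sigma^{\cT_f}$. Your write-up makes this explicit. The étale consistency check is a good sanity test against the preceding corollary.
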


\subsection{Universal Euler class}

We here do the same construction as \cite[Paragraph 3.1]{DJK21} for the universal Euler class. However, we directly build it as an element of the stable homotopy.\\

Let $X \to S$ be a smooth s-morphism and $\cE$ locally free sheaf of constant rank $r$ on $X$. We then get schemes morphisms $p: E = \eV(\cE) \to X$ and $s: X \to E$, the projection from the vector bundle and the zero section, respectively. The relative tangent sheaf to the zero section $\cT_s$ is 
the conormal sheaf $(\cI_{s(X)}/\cI_{s(X)}^{2})$.
Moreover, since $\Symalg{\cE} \simeq \Symalg{\cT_s}$ it gives rise to the purity isomorphism \ref{THO - purete version six foncteurs}: $\Sigma^{\cE} s^\ast \isomr s^!$.

\begin{deff}[Stable universal Euler class]\label{DEF - classe Euler univ stable}
Given the above notations, the counit $\epsilon$ of the adjonction $\xymatrix{s^\ast: \StableHo{E} \ar@<+2pt>[r] & \StableHo{X}: s_{\ast}, \ar@<+2pt>[l]}$, the unit $ \eta$ of the adjonction $\xymatrix{s_!: \StableHo{X} \ar@<+2pt>[r] & \StableHo{E}: s^{!}, \ar@<+2pt>[l]}$ and the purity Theorem \ref{THO - purete version six foncteurs} together give the stable universal Euler class as the composition:
\[\xymatrix{e^u(\cE): \eS_X \ar[r]^\eta & s^! s_{!}\eS_X \ar[r]^{\fp_{s}\qquad \quad } & \Thom[X]{\cE}\otimes_X s^\ast s_{\ast}\eS_X \ar[r]^{\qquad\epsilon}&\Thom[X]{\cE} .}\]
\end{deff}

\begin{rmq}
For now, it is not a class in a cohomology nor is it question of the orientation of a vector bundle. This morphism exists the moment that we have a vector bundle $E= \eV(\cE) \to X$.
\end{rmq}

\subsection{Homologies, cohomologies}

In the context of the stable homotopy and six functors formalism, we define homologies / cohomologies as morphism classes:

\begin{deff}[Homologies and cohomologies]\label{DEF - homologies et cohomologies}
Let us fix a base scheme $S$ and $\eE$ a motivic spectrum in $\StableHo{S}$. For $f: X \to S$ a morphism and $v$ in $K(X)$, we define:
\begin{align*}
\text{(Borel-Moore homology)}\ &\ \eE^{BM}(X/S,v):= \Maps[\StableHo{S}]{\eS_S,f_\ast(f^!(\eE) \otimes\Thom[X]{v})}\\
\text{(cohomology)}\ &\ \eE(X,v):= \Maps[\StableHo{S}]{\eS_S,f_\ast(f^\ast(\eE) \otimes\Thom[X]{v})}\\
\text{(homology)}\ &\ \eE^c(X/S,v):= \Maps[\StableHo{S}]{\eS_S,f_!(f^!(\eE) \otimes\Thom[X]{v})}\\
\text{(compactly supported cohomology)}\ &\ \eE_c(X,v):= \Maps[\StableHo{S}]{\eS_S,f_!(f^\ast(\eE) \otimes\Thom[X]{v})}.\\
\end{align*}
\end{deff}

\begin{rmq}
For $n$ an integer, we will write $ \eE^{BM}_n(X/S,v):= \Maps[\StableHo{S}]{\eS_S[n],f_\ast(f^!(\eE) \otimes\Thom[X]{v})}$ and similarly $\eE^n(X,v):= \Maps[\StableHo{S}]{\eS_S,f_\ast(f^\ast(\eE) \otimes\Thom[X]{v})[n]}.$
\end{rmq}

We have a preferred spectrum in  $\StableHo{S},$ it is the unit $\eS_S$ and its cohomology will be written as $H(X,v).$ When $\eE$ is a spectrum, the cohomology it represents will have the same properties than the universal cohomology under the choice of a unit morphism $\rho: \eS_S \to \eE$. Then, $\eE$ will be called a unital theory (c.f. \cite[Definition 4.1.2]{DJK21}).

\begin{deff}[Euler class with coefficients]\label{DEF - Euler class with coefficients}
Let $f: X \to S$ be a smooth s-morphism and $\cE$ a locally free sheaf of rank $r$ on $X$. Then, for $\eE$ a unital theory on $S$, the Euler class \ref{DEF - classe Euler univ stable} $e^u(\cE)$ defines an element $e^\eE(\cE)$ of $\eE(X,\cE)\simeq \eE^{BM}(X/S,\cE-\cT_f).$ We call this element the Euler class with coefficients (in $\eE$) and write $e(\cE)$ if there is no ambiguity on $\eE.$
\end{deff}

\begin{prop}[Proper push-forward]\label{PROP - covariance propre}
Let $\eE$ be a spectrum in $\StableHo{S}.$ For every proper morphism $f: X \to Y$ of s-schemes on $S,$ there is a direct image 
\[f_\ast: \eE^{BM}(X/S,f^\ast v) \to \eE^{BM}(Y/S,v).\]
\end{prop}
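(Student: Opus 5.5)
We need to construct $f_\ast$ from the six functors formalism, using that $f_! \simeq f_\ast$ for $f$ proper.

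\textbf{Setup.} Write $p: X \to S$ and $q: Y \to S$ for the structure morphisms, so that $p = q\circ f$. Since $f$ is proper, Theorem \ref{THO - formalisme six foncteurs SH}(2) gives $f_! \isomr f_\ast$. Hence $p_\ast = q_\ast f_\ast \simeq q_\ast f_!$ and $p^! = f^! q^!$.

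\textbf{Rewriting the source.} The source is
\[\eE^{BM}(X/S,f^\ast v) = \Maps[\StableHo{S}]{\eS_S, q_\ast f_\ast\bigl(f^! q^!\eE \otimes \Thom[X]{f^\ast v}\bigr)}.\]
The Thom space functor commutes with pullback, since $\Sigma^{f^\ast\cE} f^\ast \simeq f^\ast \Sigma^{\cE}$ and $f^\ast$ is monoidal. This gives $\Thom[X]{f^\ast v} \simeq f^\ast \Thom[Y]{v}$. Using $f_\ast \simeq f_!$ and the projection formula (4), we get
\[f_\ast\bigl(f^! q^!\eE \otimes f^\ast\Thom[Y]{v}\bigr) \simeq f_!\bigl(f^! q^!\eE \otimes f^\ast\Thom[Y]{v}\bigr) \simeq f_! f^!(q^!\eE) \otimes \Thom[Y]{v}.\]

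\textbf{Defining the map.} Let $\epsilon: f_! f^! \to \Id$ be the counit of the adjunction $(f_!, f^!)$. Tensoring with $\Thom[Y]{v}$ and applying $q_\ast$ gives a morphism
\[q_\ast\bigl(f_! f^! q^!\eE \otimes \Thom[Y]{v}\bigr) \to q_\ast\bigl(q^!\eE \otimes \Thom[Y]{v}\bigr).\]
Post-composing maps out of $\eS_S$ with this morphism defines
\[f_\ast : \eE^{BM}(X/S,f^\ast v) \to \eE^{BM}(Y/S,v).\]

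For $v$ in $K(Y)$ rather than an honest sheaf, the identification $\Thom[X]{f^\ast v} \simeq f^\ast\Thom[Y]{v}$ is the naturality of $\operatorname{Th}$ as a map of $E_\infty$-groups in Proposition \ref{PROP - def foncteur Thom}. This is the main point to check, but it is part of the cited formalism.
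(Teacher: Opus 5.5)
Your proposal is correct and follows essentially the same route as the paper. The paper's proof is the chain $f_\ast \Sigma^{f^\ast v} p^!\eE \simeq \Sigma^v f_\ast p^!\eE \simeq \Sigma^v f_! f^! q^!\eE \to \Sigma^v q^!\eE$ through the counit of $f_! \dashv f^!$; you spell out its first isomorphism (pullback compatibility of $\operatorname{Th}$ plus the projection formula for $f_! \simeq f_\ast$) and state explicitly that one then applies $q_\ast$ and post-composes.
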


\begin{proof}
If we note $p,q$ the structure morphisms of $X$ and $Y$ on $S,$ we get the commutative diagram
\[\xymatrix{X\ar[rr]^f \ar[dr]_p && Y \ar[dl]^q \\ &S.&}\]
Then,
\begin{align*}
    f_\ast \Sigma^{f^\ast v}p^! \eE & \simeq \Sigma^v f_\ast p^! \eE\\
    & \simeq \Sigma^v f_!f^!q^!\eE \to \Sigma^v q^!\eE
\end{align*}
where the last morphism is the counit of the adjonction $s_! \dashv s^!.$
\end{proof}

\subsection{Orientations}\label{SSEC - orientations}

We now define two types of orientations that will be useful in the following applications: the orientation of a (co)homological theory and the (relative) orientation of a vector bundle over its base.

\begin{deff}[Oriented theory]\label{DEF - theorie orientee}
Let $S$ be a base scheme. A theory $\eE$ in $\StableHo{S}$ is said 
\begin{itemize}
    \item ($\Gl-$)oriented if $\eE^{\ast}(X,\fvirt{\cE}) \simeq \eE^{\ast+r}(X)$
    \item $\SL-$oriented if $\eE^{\ast}(X,\fvirt{\cE}) \simeq \eE^{\ast+r}(X,\fvirt{\det \cE})$
    \item $\SL^c-$oriented if $\eE^{\ast}(X,\fvirt{\cE}) \simeq \eE^{\ast+r}(X,\fvirt{\det \cE \otimes \cL^{\otimes 2}})$
\end{itemize}
for all smooth $S-$scheme $X$, $\cE$ locally free of rank $r$ sheaf on $X$ and $\cL$ an invertible $\cO_X-$module.
\end{deff}

\begin{rmq}
The isomorphisms in the above definition must be functorial. When it is the case, we will then say that the theory is orientable and a choice of isomorphisms fixes the orientation. In some cases, like the Milnor-Witt theory, there is existence and unicity of the orientation (c.f. \cite{Ananyevskiy_SLoriented}) hence no distinction in the definition between orientable and oriented.
\end{rmq}

\begin{prop}[Orientation of usual theories]\label{PROP - orientation theories usuelles}
Let $S = \Spec{\fF}$ be the base scheme.
\begin{itemize}
    \item The Milnor-Witt cohomology spectrum is $\SL^c-$oriented in $\StableHo{\fF}$ (c.f. \cite[Remark 5.5]{Ananyevskiy_SLoriented}).
    \item The Witt cohomology spectrum (or of the $\fId^\bullet-$cohomology) is $\SL^c-$oriented in $\StableHo{\fF}$.
    \item The Milnor cohomology spectrum is $\Gl-$oriented in $\StableHo{\fF}$.
\end{itemize}
\end{prop}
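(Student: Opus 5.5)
This proposition gathers three known facts about the orientations of the Milnor--Witt, Witt (or $\fId^\bullet$) and Milnor cohomology spectra. The plan is to reduce each to the construction of a functorial Thom isomorphism, in the sense of Definition \ref{DEF - theorie orientee}.

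The key step is to pass from $\SL$-orientations to $\SL^c$-orientations. For the Milnor--Witt spectrum $H\widetilde{\mathbb{Z}}$ (representing $\underline{K}^{MW}_\ast$-cohomology, so Chow--Witt groups) I would invoke \cite{Ananyevskiy_SLoriented}. A locally free $\cE$ of rank $r$ with a trivialisation of $\det\cE$ yields a Thom class in $\eE^{r}(X,\fvirt{\cE})$, i.e. a Thom isomorphism $\eE^\ast(X,\fvirt{\cE})\simeq\eE^{\ast+r}(X)$. Applying this to $\cE\oplus\det\cE^{\vee}$, whose determinant is canonically trivial, and using the additivity $\Thom{\cE\oplus\cF}\simeq\Thom{\cE}\otimes\Thom{\cF}$ from the remark after Proposition \ref{PROP - def foncteur Thom}, gives the $\SL$ form $\eE^\ast(X,\fvirt{\cE})\simeq\eE^{\ast+r}(X,\fvirt{\det\cE})$.

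It then remains to show that twisting by $\cL^{\otimes 2}$ does nothing. For this I would write down the canonical isomorphism $\eE(X,\fvirt{\cM\otimes\cL^{\otimes 2}})\simeq\eE(X,\fvirt{\cM})$. Locally on a trivialising cover it is multiplication by $\qscal{u^2}=1$ for transition functions $u$, so it glues. At the level of Rost--Schmid complexes this is the standard fact that $\underline{K}^{MW}_\ast(\cM)$ depends only on $\cM$ modulo squares, since $\qscal{a^2}=1$ in $\KMW{F}$. Functoriality holds because these isomorphisms are defined by universal formulas compatible with pullback.

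For the Witt / $\fId^\bullet$ spectrum, I would use that it is a quotient of the Milnor--Witt spectrum: $\sKW=\underline{K}^{MW}/h$, i.e. the cofibre of multiplication by the hyperbolic element $h$. A unital ring map $H\widetilde{\mathbb{Z}}\to\eE$ transports Thom classes, so the $\SL^c$-orientation descends.

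For Milnor cohomology $H\mathbb{Z}$, the map $\underline{K}^{MW}\to\underline{K}^M$ kills $\eta$, so $\qscal{a}=1$ for all units $a$. The twist by any line bundle then becomes trivial, and the $\SL$-orientation upgrades to a $\Gl$-orientation, which recovers the classical Thom isomorphism of motivic cohomology.

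I expect the main obstacle to be the uniqueness and functoriality issue flagged in the remark after Definition \ref{DEF - theorie orientee}. One must check that the local square-class identifications are compatible with the Thom isomorphisms chosen for short exact sequences. I would handle this by citing \cite[Remark 5.5]{Ananyevskiy_SLoriented} for the Milnor--Witt case and deducing the other two via the ring maps above, rather than constructing it by hand.
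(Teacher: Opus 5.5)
Your proposal is correct and consistent with the paper, which gives no argument of its own: it cites \cite[Remark 5.5]{Ananyevskiy_SLoriented} for the Milnor--Witt case and only asserts the Witt and Milnor cases. Your route fills these in by deducing everything from the $\SL$-orientation of the Milnor--Witt spectrum, using three ingredients:
\begin{enumerate}
\item invariance of twists under $\cL\mapsto\cL\otimes\cM^{\otimes 2}$, via the choice-free map $m\otimes l\mapsto m\otimes(l\otimes w\otimes w)$ on Rost--Schmid complexes, which works because $\qscal{a^2}=1$;
\item transport of Thom classes along unital ring maps;
\item triviality of twists in Milnor $K$-theory, which is exactly Corollary \ref{COR - kth Milnor ne se tord pas}.
\end{enumerate}
This derivation is sound, but two points need fixing.

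First, the $\sKW$-spectrum is not the cofibre of multiplication by $h$ on the Milnor--Witt spectrum. Since $\eta h=0$, multiplication by $h$ kills $\eta\,\underline{K}^{MW}_{\ast+1}$. The cofibre therefore acquires the extra homotopy sheaf $\ker(h)$ and is not a ring spectrum a priori. The unital ring map you want is the one induced by the quotient of homotopy modules $\underline{K}^{MW}_\ast\to\underline{K}^{MW}_\ast/(h)=\sKW_\ast$. If you mean the Witt sheaf $\bfW$ rather than $\fId^\bullet$, use instead the map induced by $\eta$-localisation.

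Second, the Thom isomorphism of $\cE\oplus\det\cE^\vee$ alone does not give $\eE^\ast(X,\fvirt{\cE})\simeq\eE^{\ast+r}(X,\fvirt{\det\cE})$. You need a second use of the $\SL$ Thom isomorphism, for $\det\cE\oplus\det\cE^\vee$, because $\fvirt{\det\cE}+\fvirt{\det\cE^\vee}$ is not $2$ in $K_0(X)$ in general.
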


In the case of an oriented theory, we can look at what becomes the Euler class of a locally free sheaf $\cE$ of rank $r$ on $X$. In the particular case of the Milnor theory, the Euler class becomes the top Chern class of the sheaf. If $\dim X = r,$ this $c_r(\cE)$ is a $0-$dimensional class and we can take its degree which is an integer. In the Milnor-Witt case, we would like to similarly take the degree of the $0-$dimensional class induced by the Euler class and look at the value in the Grothendieck-Witt group. 

More explicitly, we have $e(\cE) \in \Chowtilde[r]{X,\det \cE}$ and  $\Chowtilde[r]{X,\det \cE}$ is isomorphic to $\Chowtildecov[0]{X,\det \cE \otimes \omega_X^\vee}.$ So we need to first define

\begin{deff}[Relative orientability]\label{DEF - orientabilite relative}
Let $X$ be a smooth $\fF-$scheme and $\cE$ a locally free sheaf of finite rank $r = \dim X$ on it. $\cE$ is relatively orientable if there exists $\cL$ a locally free sheaf of rank 1 such that $\det \cE \simeq \omega_X \otimes \cL^{\otimes 2}.$ 

We also say that two invertible sheaves are in the same quadratic equivalence class or quadratically equivalent if they differ only by the product with a tensor square and we will note $\cE \simquad \cF$.
\end{deff}

\begin{deff}[Quadratic degree map]\label{DEF - degre quadratique}
Let $\pi: X\to \Spec{\fF}$ be a smooth and proper $\fF-$scheme. The quadratic degree map is the push forward (c.f. \ref{PROP - covariance propre})
\[\pi_\ast: \Chowtildecov[0]{X} \to \Chowtildecov[0]{\fF}\simeq \GW(\fF).\]
\end{deff}

\begin{rmq}\label{RMQ - degre quad classe Euler}
To talk about the quadratic degree of an Euler class, it is necessary for the sheaf to be relatively orientable over the base scheme.

We will also talk about relative orientation of $\Chowtilde[\dim X]{X,\cL}$ if $\cL \simquad \omega_X.$ 
\end{rmq}

Since most of the computations done here are for projective bundles and Grassmannians, we will discuss some vector bundle classes as examples.

\begin{prop}[Orientability over the Grassmannian]\label{PROP - orientabilite grassmann}
Let $G = \Grass(n,k)$ be the Grassmannian of the rank $k$ quotients of $V$ a $n-$dimensional vector space over $\fF$. Then, $\omega_G \simeq (\det \cQ^\vee)^{\otimes n} := \cO(-n)$ in $\Picard{G}$. Furthermore, if with denote the universal quotient by $Q$, then $\det\left(\Symalg[r]{\cQ}\right)$ is isomorphic to $\cO\left(rk (\Symalg[r]{\cQ})\right) = \cO\left(\binom{r+k-1}{k-1}\right).$
\end{prop}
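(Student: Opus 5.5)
We argue in $\Picard{G}$, which is $\eZ$ generated by $\cO(1) := \det \cQ$; this is the standard fact that $\Picard{\Grass(n,k)} \simeq \eZ$, generated by the Plücker line bundle. We treat the two claims separately, each reduced to a determinant computation along an exact sequence.

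For the canonical sheaf, we use the tautological exact sequence on $G$,
\[0 \to \cS \to V\otimes \cO_G \to \cQ \to 0,\]
with $\cS$ of rank $n-k$ and $\cQ$ of rank $k$. We take the classical identification of the tangent sheaf $\cT_G \simeq \mathcal{H}om(\cS,\cQ) \simeq \cS^\vee\otimes \cQ$; it can be checked on the standard affine charts, or via the Euler-type description of the differentials of the Grassmannian. Hence $\Omega_G \simeq \cS\otimes\cQ^\vee$, and $\omega_G = \det(\cS\otimes \cQ^\vee)$.

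We then apply the formula $\det(\cA\otimes\cB) \simeq (\det \cA)^{\otimes \operatorname{rk}\cB}\otimes(\det\cB)^{\otimes\operatorname{rk}\cA}$. It is proved by the splitting principle, or locally from the behaviour of transition matrices under the Kronecker product. This gives
\[\omega_G \simeq (\det\cS)^{\otimes k}\otimes (\det \cQ^\vee)^{\otimes (n-k)}.\]
The exact sequence gives $\det \cS\otimes\det\cQ\simeq \det(V\otimes\cO_G)\simeq\cO_G$, so $\det\cS \simeq \det\cQ^\vee$. Therefore $\omega_G\simeq(\det\cQ^\vee)^{\otimes n}=\cO(-n)$.

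For the symmetric powers, we compute $\det \Symalg[r]{\cQ}$ by the splitting principle. Alternatively, we work $\Gl_k$-equivariantly: $\det\Symalg[r]{\cQ}$ is the line bundle associated with the character $\det$ of $\Symalg[r]{}$ of the standard representation. On the diagonal torus, with formal Chern roots $x_1,\dots,x_k$ of $\cQ$, the roots of $\Symalg[r]{\cQ}$ are the sums $\sum a_ix_i$ with $\sum a_i = r$. Summing over all monomials, each $x_i$ appears with total multiplicity $\frac{r}{k}\binom{r+k-1}{k-1}$ by symmetry. Thus
\[c_1\bigl(\Symalg[r]{\cQ}\bigr) = \frac{r}{k}\binom{r+k-1}{k-1}\,c_1(\cQ).\]
Since $c_1$ detects line bundles on $G$ (because $\Picard{G}\simeq \Chowct[1]{G}\simeq\eZ$), this identifies $\det\Symalg[r]{\cQ}$ with $\cO\bigl(\tfrac{r}{k}\binom{r+k-1}{k-1}\bigr)=\cO\bigl(\binom{r+k-1}{k}\bigr)$.

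The main obstacle is this last exponent. The exponent I obtain is $\binom{r+k-1}{k}$, whereas the statement as written gives the rank $\binom{r+k-1}{k-1}$. These agree only when $r=k$, and in general the exponent is $\frac{r}{k}\cdot\operatorname{rk}$. So I would prove the statement with the exponent $\binom{r+k-1}{k}$. For the orientability arguments later in the paper, only the parity of the exponent matters, and I would verify it in the cases used there: for $k=2$, $r=3$ one gets $6$, and for $k=2$, $r=2$ one gets $3$.
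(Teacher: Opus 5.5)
For the canonical sheaf, your argument is the paper's: $\Omega_G\simeq\cS\otimes\cQ^\vee$, the formula for the determinant of a tensor product, and $\det\cS\simeq\det\cQ^\vee$ from the tautological sequence.

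For the symmetric powers, the paper gives no argument. It simply asserts $\det\Symalg[r]{\cQ}\simeq(\det\cQ)^{\binom{r+k-1}{k-1}}$, and your diagnosis is correct. The exponent is $\binom{r+k-1}{k}=\frac{r}{k}\operatorname{rk}\Symalg[r]{\cQ}$, which agrees with the rank only when $r=k$. For example, for $k=2$ and $r=3$ the Chern roots $3x_1,\,2x_1+x_2,\,x_1+2x_2,\,3x_2$ sum to $6c_1(\cQ)$, not $4c_1(\cQ)$.

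Your Chern-root computation is a complete proof of the corrected statement. It rests on $c_1\colon\Picard{G}\isomr\Chowct[1]{G}$ for the smooth variety $G$, together with $c_1(\det\cE)=c_1(\cE)$. Alternatively, your $\Gl_k$-character argument gives a canonical isomorphism over any base.

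On how this affects the rest of the paper: the only case used explicitly is $r=k=2$, the $\cO_G(3)$ in the degree $4$ section. There both exponents equal $3$, so nothing downstream changes. The case $r=3$ only appears through the cubic surface in the introduction, so your check there is not strictly needed.

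The correction is not cosmetic in general, because the parity can change, and parity is exactly what decides orientability. For $k=2$ and $r=4$ the correct exponent is $10$, while the stated one is $5$.
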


\begin{proof}
Using the description of the tangent space of the Grassmannian given in \cite[Example 16.1]{Harris_alg-geom} and taking its dual to have the Kähler differential sheaf, we know that $\Omega_G \simeq \Hom{\cQ,\cS} \simeq \cQ^\vee\otimes \cS$ with $\cS$ the associated sub-sheaf to $\cQ$ in the Euler exact sequence 
\[\xymatrix{0 \ar[r] & \cS \ar[r] & V\otimes\cO_G \ar[r] & \cQ \ar[r] & 0.}\]
Moreover, $\det \cS \simeq \det \cQ^\vee$ and with the Plücker embedding $\det \cQ :=\cO(1)$ as a generator of the Picard group. Hence, since $rk(\cQ) = k$ and $rk(\cS) = n-k,$ we get $\omega_G \simeq (\det \cQ^\vee)^{n-k}\otimes (\det \cS)^{k} \simeq \cO(-n).$ 
The second part comes from $\det\left(\Symalg[r]{\cQ}\right)\simeq (\det \cQ)^{\binom{r+k-1}{k-1}}.$
\end{proof}
\begin{rmq}
From the last proof, we can emphasise on the fact that we take $\det \cQ$ as a generator of the Picard group of the Grassmannian and we note it $\cO_G(1).$
\end{rmq}

Let $\cF$ be a locally free $\cO_X-$module of finite rank over a scheme $X$. Then $ \pi: Y = \eP(\cF)\to X$ then satisfies the two exact sequences:
\[0 \to \Omega_{Y/X} \to \cO_Y(-1)\otimes\pi^\ast\cF \to \cO_Y \to 0 \]
\[\text{and}\ 0 \to \pi^\ast \Omega_X \to \Omega_Y \to \Omega_{Y/X} \to 0.\]
Thus
\begin{align}\label{EQU - omega fibre projectif}
\omega_Y \simeq \cO_Y(-rg\ \cF)\otimes \pi^\ast \det \cF \otimes \pi^\ast \omega_X.
\end{align}

\section{Euler class computation}\label{SEC - calcul classe Euler}

We will apply all this motivic setup in the stable Witt, $(2-)$Milnor or Milnor-Witt cohomology spectra. Let us begin with a short reminder of definitions since we will need to use them a bit (properties and characterisations can be found in \cite{deglise_KMW_notes}). Just before that, we will remind \cite[Definition 3.1.2, Remark 3.1.4]{DF_KMW-rpz}:

\begin{prop}
The (co)homologies with coefficients in Milnor-Witt (or its quotients) $K-$theory are representable as Eilenberg-MacLane spectra in $\StableHo{\fF}.$
\end{prop}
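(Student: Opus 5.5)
This statement is a recollection from \cite[Definition 3.1.2, Remark 3.1.4]{DF_KMW-rpz}, so the plan is mainly to explain why the cited construction gives what we need. First, the Milnor-Witt $K$-theory sheaves $\underline{K}^{MW}_n$ are unramified sheaves of abelian groups on smooth $\fF$-schemes. They come with the Rost-Schmid residue and transfer maps, which make $\underline{K}^{MW}_\ast$ a Milnor-Witt cycle module in the sense of Feld. Second, strictly $\eA^1$-invariant Nisnevich sheaves with the extra structure of a homotopy module (i.e.\ $\mathbb{G}_m$-loopings $(\underline{K}^{MW}_{n+1})_{-1}\simeq \underline{K}^{MW}_n$) correspond to objects of the heart of the homotopy $t$-structure on $\StableHo{\fF}$. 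Morel's theorem identifies $\underline{K}^{MW}_\ast$ with $\underline{\pi}_0(\eS_\fF)_\ast$. The Eilenberg-MacLane spectrum $H\underline{K}^{MW}$ is then the corresponding object of the heart, namely the truncation $\tau_{\le 0}\eS_\fF$. By construction it comes with the unit map $\rho:\eS_\fF\to H\underline{K}^{MW}$, so it is a unital theory in the sense used above.

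Next I will check that the cohomology of Definition \ref{DEF - homologies et cohomologies} computed with this spectrum agrees with Nisnevich cohomology of the sheaf, twisted as expected. For smooth $X$ the Thom space of a line bundle $\cL$ shifts by $(1)[2]$ and twists the coefficients by $\cL$. This gives $H\underline{K}^{MW}(X,\cL - \cO)^{n}\simeq H^n(X,\underline{K}^{MW}_n(\cL))$, which is computed by the Rost-Schmid complex $\RostS{X}{\ast}{\cL}$ because that complex is a flasque (Gersten) resolution. The Borel-Moore side follows by purity (Theorem \ref{THO - purete version six foncteurs}), using $f^!\simeq \Sigma^{\cT_f}f^\ast$, which yields the twist by $\omega_X$ and matches Chow-Witt groups of cycles. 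The quotients $\underline{K}^M$, $\underline{K}^W$ and $\fId^\bullet$ are handled the same way: they are homotopy modules as quotients of $\underline{K}^{MW}_\ast$ by sub-homotopy modules ($\eta$ and $h$), so they also lie in the heart and are representable.

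The main obstacle is the compatibility of twists. The spectrum-level twist by $\Thom[X]{v}$ for a virtual bundle $v$ must be matched with the sheaf-level twist by $\det v$ in the Rost-Schmid complex, including the signs in the residue maps. I would not redo this; I would cite \cite{DF_KMW-rpz}, where it is checked through Feld's comparison of MW-cycle modules with homotopy modules. Only functoriality (pull-back and the proper push-forward of Proposition \ref{PROP - covariance propre}) needs to be recorded for later use.
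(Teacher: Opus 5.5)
Your proposal is correct and consistent with the paper, which gives no argument of its own beyond citing \cite[Definition 3.1.2, Remark 3.1.4]{DF_KMW-rpz}. Your sketch is a standard route to that statement: Morel's identification $\underline{\pi}_0(\eS_\fF)_\ast\simeq\underline{K}^{MW}_\ast$, the heart of the homotopy $t$-structure, Gersten/Rost--Schmid resolutions, and the quotients by $\eta$ and $h$ as homotopy modules; and, like the paper, you defer the twist bookkeeping to that reference.

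One indexing slip needs fixing, with $\eE=H\underline{K}^{MW}$. In the paper's convention $\eE^n(X,v)$ is a plain shift by $[n]$, so $\eE^n(X,\cL-\cO_X)$ gives $H^n(X,\underline{K}^{MW}_0(\cL))$. By contrast, $\Chowtilde[n]{X,\cL}=H^n(X,\underline{K}^{MW}_n(\cL))$ is $\eE(X,v)$ for a virtual sheaf $v$ of rank $n$ with $\det v\simeq\cL$ (e.g.\ $v=\cL+(n-1)\cO_X$). This is exactly how $e(\cE)\in\eE(X,\cE)$, for $\cE$ of rank $r$, lands in $\Chowtilde[r]{X,\det\cE}$.
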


\begin{rmq}
In particular, the Euler class exists in all cohomologies by specialisation and coincides with cohomology change.
\end{rmq}

\subsection{Definitions of the $K-$theory rings}

\begin{deff}[Milnor $K-$theory ring]\label{DEF - K-th de Milnor}
For $\fF$ a field, the Milnor $K$-theory ring $K^M_\ast (\fF)$ is defined by the $\eZ-$graded algebra generated by the symbols $\accol{a}$, for $a$ in $\fF^\times$, in degree $+1$ and relations:
\begin{enumerate}[label={(M\arabic*)}]
\item $\accol{a,1-a} = 0$
\item $\accol{ab} = \accol{a} + \accol{b}$
\end{enumerate}
with the convention $\accol{a_1, \ldots,a_n} = \accol{a_1} \ldots \accol{a_n}$.
\end{deff}

\begin{rmq}
We have two obvious identities: $K^M_0(\fF) = \eZ$ and $K^M_1(\fF) = \fF^\times$.
\end{rmq}

\begin{deff}[Milnor-Witt $K-$theory ring]\label{DEF - K-th de Milnor-Witt}
Let $\fF$ be a field. The Milnor-Witt $K$-theory ring is given by the $\eZ-$graded algebra generated by elements $[a]$ in degree $+1$ for $a$ in $\fF^\times$ and $\eta$ in degree $-1$, called Hopf element, with the relations:
\begin{enumerate}[label={(MW\arabic*)}]
\item \label{ITM - axm MW1} $[a,1-a] = 0$ (if $a$ and $a-1$ are not $0$),
\item \label{ITM - axm MW2}$[ab] = [a] + [b] + \eta[a,b]$,
\item \label{ITM - axm MW3}$\eta [a] = [a]\eta$,
\item \label{ITM - axm MW4}$\eta (2 + \eta[-1]) = 0$
\end{enumerate}
with the convention $[a_1,\ldots,a_n] = [a_1]\ldots [a_n]$ in $\KMW[n]{\fF}.$
\end{deff}

\begin{deff}[Grothendieck-Witt ring]\label{DEF - anneau de Grothendieck-Witt}
We note $\GW(\fF)$ the ring of quadratic forms up to isometry with the multiplication being the tensor product of quadratic forms and the underlying group being the Grothendieck group spanned from the direct sum of quadratic forms.
\end{deff}

For a scalar $a$ in $\fF^\times,$ one can look at the quadratic form $\qscal{a}: X \mapsto a X^2.$ 

\begin{rmq}
We can define $\GW(\fF)$ by generators in $\fF^\times/(\fF^\times)^2$ and relations $\qscal{u}+ \qscal{v} = \qscal{u+v,(u+v)uv}$ as long as $u+v \neq 0$.
\end{rmq}

\begin{lemme}[\cite{Morel_A1-alg-top}, Lemma 3.10]\label{LEM - iso GW et KMW0}
The ring homomorphism 
\[\begin{array}{ccc}
    \GW(\fF) & \to & \KMW[0]{\fF}  \\
    \qscal{a} & \mapsto & 1 + \eta [a] 
\end{array}\] 
is an isomorphism. The image of $\qscal{a}$ in $\KMW[0]{\fF}$ will still be noted $\qscal{a}$.
\end{lemme}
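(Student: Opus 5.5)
We must show that the map $\Phi:\GW(\fF)\to\KMW[0]{\fF}$, $\qscal{a}\mapsto 1+\eta[a]$, is a ring isomorphism. The plan is to construct $\Phi$ on the presentation of $\GW(\fF)$ by generators and relations, build an inverse $\Psi$ through the presentation of $\KMW[0]{\fF}$ in degree $0$, and check that the two composites are the identity.

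\textbf{Step 1: $\Phi$ is well defined.} I will use the presentation of $\GW(\fF)$ by symbols $\qscal{a}$, $a\in\fF^\times/(\fF^\times)^2$, with relations $\qscal{ab^2}=\qscal{a}$, $\qscal{u}+\qscal{v}=\qscal{u+v}+\qscal{(u+v)uv}$ for $u+v\neq 0$, and $\qscal{a}+\qscal{-a}=\qscal{1}+\qscal{-1}$. Setting $\qscal{a}:=1+\eta[a]$ in $\KMW[0]{\fF}$, I need the following facts.
\begin{itemize}
\item \emph{Multiplicativity.} By \ref{ITM - axm MW2} and \ref{ITM - axm MW3}, $\qscal{ab}=\qscal{a}\qscal{b}$.
\item \emph{Square classes.} $\eta[b^2]=0$. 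From \ref{ITM - axm MW2}, $[b^2]=2[b]+\eta[b,b]$. Together with $[b,b]=[b,-1]$ and \ref{ITM - axm MW4}, this gives $\eta[b^2]=\eta[b](2+\eta[-1])=0$.
\item \emph{Hyperbolic relation.} $\qscal{a}+\qscal{-a}=2+\eta[-1]$. This follows from $[a,-a]=0$, which is a consequence of \ref{ITM - axm MW1}.
\item \emph{Witt relation.} $\qscal{u}+\qscal{v}=\qscal{u+v}+\qscal{(u+v)uv}$. After dividing by $u+v$ it reduces to $\qscal{a}+\qscal{1-a}=1+\qscal{a(1-a)}$. Expanding $\qscal{a(1-a)}=\qscal a\qscal{1-a}$, this is equivalent to $\eta^2[a,1-a]=0$, which holds by \ref{ITM - axm MW1}.
\end{itemize}
Hence $\Phi$ is a well-defined ring homomorphism.

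\textbf{Step 2: surjectivity.} As an abelian group, $\KMW[0]{\fF}$ is generated by $1$ and the products $\eta^{m}[a_1,\dots,a_m]$. Using $\eta[a_1][a_2]=[a_1a_2]-[a_1]-[a_2]$, which is \ref{ITM - axm MW2}, I reduce these products by induction on $m$ to linear combinations of $1$ and $\eta[a]=\qscal{a}-1$. So $\Phi$ is surjective.

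\textbf{Step 3: injectivity (main obstacle).} I will construct an inverse, or at least a left inverse. The plan follows Morel's fiber product description $\KMW[\ast]{\fF}\simeq K^M_\ast(\fF)\times_{I^\ast/I^{\ast+1}} I^\ast$, where $I^n=\fId^n$ and $I^n=\Witt(\fF)$ for $n\le 0$. Concretely, I define a graded ring map $\KMW[\ast]{\fF}\to K^M_\ast(\fF)\times_{I^\ast/I^{\ast+1}}I^\ast$ by
\[
[a]\mapsto \bigl(\accol{a},\,-\qqscal{a}\bigr),\qquad \eta\mapsto(0,\langle1\rangle\in \Witt=I^{-1}).
\]
I then verify the relations \ref{ITM - axm MW1}--\ref{ITM - axm MW4}: \ref{ITM - axm MW1} uses the Pfister relation $\qqscal{a,1-a}=0$, and \ref{ITM - axm MW4} uses $2+\eta[-1]\mapsto \qscal{1}+\qscal{-1}=h=0$ in $\Witt$. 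In degree $0$ the target is $\eZ\times_{\eZ/2}\Witt(\fF)$, which is classically isomorphic to $\GW(\fF)$ via the rank and the Witt class. The composite $\GW(\fF)\to\KMW[0]{\fF}\to\GW(\fF)$ sends $\qscal{a}$ to $(1,\qscal a)$, so it is the identity, and $\Phi$ is injective. The delicate points are getting the signs and conventions for $\eta$ right in this check, and the classical fact $\GW\simeq\eZ\times_{\eZ/2}\Witt$; I would take the latter from Milnor--Husemoller.

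Combining Steps 1--3, $\Phi$ is a ring isomorphism.
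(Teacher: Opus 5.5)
Your proof is correct, and it is essentially the argument in Morel's book (Lemma 3.10), which the paper cites without reproducing: the relations of $\GW(\fF)$ hold in $\KMW[0]{\fF}$, so the map is a well-defined ring homomorphism; the monomials $\eta^m[a_1,\dots,a_m]$ give surjectivity; and the ring map $\KMW{\fF}\to K^M_\ast(\fF)\times_{\fId^\ast/\fId^{\ast+1}}\fId^\ast(\fF)$, whose degree-zero part is $\eZ\times_{\eZ/2}\Witt(\fF)\simeq\GW(\fF)$, is a left inverse and gives injectivity. For the degree-zero conclusion you do not even need compatibility over $\fId^\ast/\fId^{\ast+1}$: the two ring maps $\KMW[0]{\fF}\to\eZ$ and $\KMW[0]{\fF}\to\Witt(\fF)$, together with injectivity of $(\mathrm{rk},\text{Witt class})$ on $\GW(\fF)$, already suffice.
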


\begin{rmq}
The element $h = \qscal{1}+ \qscal{-1}$ is called the hyperbolic form in the Grothendieck-Witt group. It becomes $h = \qscal{-1} + 1$ in $\KMW[0]{\fF}$ and verifies $\eta h = 0,\ h^2 = 2h$. It is an important element to link the quadratic degree and the classical one as we will see.
\end{rmq}

\begin{deff}[Witt ring]\label{DEF - anneau de Witt}
We note $\Witt(\fF) = \GW(\fF)/(h)$ the quotient ring of quadratic forms by the ideal generated by $h.$
\end{deff}

\begin{deff}[Fundamental ideal in $\GW(\fF)$]\label{DEF - ideal fonda de GW(k)}
The kernel of the ring homomorphism $rk: \GW(\fF) \to \eZ$ is the fundamental ideal of $\GW(\fF)$ and is noted $\fId(\fF)$. 

By convention, for any $n$ non-positive integer $\fId^n(\fF) = \Witt(\fF).$
\end{deff}

\begin{deff}[Witt $K-$theory ring]\label{DEF - K-th de Witt}
For $\fF$ a field, we define the Witt $K-$theory ring by:
\[K^W_\ast(\fF) = \KMW{\fF}/(h).\]
\end{deff}

\begin{tho}\label{THO - isomorphisme Kth Witt et ideal fondamental}
Let $\fF$ be a field of characteristic different from $2$. There exists a unique isomorphism $\psi$ of $\eZ-$graded algebras
\[\xymatrix{K^W_\ast(\fF) \ar[rr]^\psi  && \fId^\ast(\fF)}\]
sending the quotient class of $\eta$ to $\eta$ and the quotient class $\accol{a}$ of $-[a]$ to $\qqscal{a}$ with $\qqscal{a} := 1-\qscal{a}$.
(c.f. \cite[Théorème 2.4, Remarque 5.2]{Morel_id-fonda})
\end{tho}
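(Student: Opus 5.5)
The statement is Morel's theorem identifying $K^W_\ast(\fF)$ with the graded ring $\fId^\ast(\fF)$, which the paper cites rather than proves. I would still organise a proof in three steps: construct $\psi$, establish uniqueness, and then prove bijectivity degree by degree, reducing the last step to Milnor's conjecture.

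\emph{Construction.} Define $\psi$ on generators by $\eta\mapsto \eta$, where $\eta$ in $\fId^{-1}(\fF)=\Witt(\fF)$ is the class $1\in\Witt(\fF)$ viewed as a degree $-1$ element. Send the class of $-[a]$ (equivalently $\{a\}$) to $\qqscal{a}=1-\qscal{a}\in \fId(\fF)$. The ring structure on $\fId^\ast(\fF)$ is the graded algebra $\bigoplus_n \fId^n(\fF)$ with $\fId^n=\Witt$ for $n\le 0$, and multiplication by $\eta$ is the inclusion $\fId^{n+1}\subset \fId^n$.

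One then checks that the relations are respected.
\begin{enumerate}
\item \ref{ITM - axm MW1} becomes $\qqscal{a}\qqscal{1-a}=0$. This is the classical vanishing of the Pfister form $\qqscal{a,1-a}$, which is hyperbolic.
\item \ref{ITM - axm MW2} becomes $\qqscal{ab}=\qqscal{a}+\qqscal{b}-\eta\qqscal{a}\qqscal{b}$ (keeping track of the sign in $-[a]$). Under the identification of $\eta$ with the inclusion, this follows from $\qscal{ab}=\qscal{a}\qscal{b}$.
\item \ref{ITM - axm MW3} holds by commutativity.
\item \ref{ITM - axm MW4} holds because $2+\eta[-1]\mapsto 1+\qscal{-1}=h$, which is $0$ in $\Witt$. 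Moreover $h$ itself maps to $0$, so the map factors through $K^W_\ast$.
\end{enumerate}

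\emph{Uniqueness.} This is immediate, since $\eta$ and the symbols generate $K^W_\ast(\fF)$ as a graded algebra.

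\emph{Bijectivity.} I would treat the degrees separately.
\begin{enumerate}
\item In degrees $n\le 0$, multiplication by $\eta$ gives $K^W_n\simeq K^W_0=\GW(\fF)/h=\Witt(\fF)$, using Lemma \ref{LEM - iso GW et KMW0}. Hence $\psi$ is an isomorphism in these degrees.
\item In degree $n>0$, surjectivity is clear because $\fId^n$ is generated additively by the $n$-fold Pfister forms $\qqscal{a_1,\dots,a_n}$.
\item Injectivity in positive degrees is the real content. I would compare the two exact sequences
\[0\to K^W_{n+1}\xrightarrow{\eta} K^W_n\to K^M_n/2\to 0\]
and
\[0\to \fId^{n+1}\to\fId^n\to \fId^n/\fId^{n+1}\to 0.\]
The first comes from the presentation, since $K^W_\ast/\eta\simeq K^M_\ast/2$. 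The map $K^M_n/2\to \fId^n/\fId^{n+1}$ is the Milnor conjecture (Orlov--Vishik--Voevodsky), so it is an isomorphism. Then an induction combined with the five lemma gives injectivity, anchored at degrees $\le 0$ and by passing to the limit using $\bigcap_n \fId^n=0$ (Arason--Pfister Hauptsatz).
\end{enumerate}

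The main obstacle is this injectivity step. It cannot be done by elementary manipulation. It genuinely requires Milnor's conjecture together with the Arason--Pfister intersection theorem, which is why in the paper it is sensible simply to cite Morel.

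Most of the remaining care goes into the sign conventions relating $[a]$, $\{a\}$ and $\qqscal{a}$ when verifying \ref{ITM - axm MW2}.
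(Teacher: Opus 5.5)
The construction of $\psi$, the checks of the relations (MW1)--(MW4), uniqueness, the degrees $n\le 0$ and surjectivity are all fine. The paper gives no argument of its own and simply cites Morel. The gap is the injectivity step.

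The presentation gives only the \emph{right} exact sequence $K^W_{n+1}(\fF)\xrightarrow{\eta}K^W_n(\fF)\to K^M_n(\fF)/2\to 0$, because $K^{MW}_\ast/(\eta,h)=K^M_\ast/2$. Nothing in the relations tells you that $\eta$ is injective on $K^W_\ast(\fF)$. In fact $\eta$ acts on $\fId^\ast(\fF)$ as the inclusion $\fId^{n+1}\subset\fId^n$, and $\psi_0$ is an isomorphism. So ``$K^W_\ast(\fF)$ has no $\eta$-torsion'' is \emph{equivalent} to the injectivity of $\psi$. Writing $0\to K^W_{n+1}\xrightarrow{\eta}K^W_n$ therefore assumes what you want to prove. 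If you had it, the five lemma would already give injectivity by upward induction from degree $0$, and Arason--Pfister would be unnecessary.

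Without left exactness, the diagram chase runs only downwards. If $\psi_n(x)=0$, Milnor's conjecture kills the class of $x$ in $K^M_n/2$, so $x=\eta y$ with $\psi_{n+1}(y)=0$. This gives $\ker\psi_n=\eta\ker\psi_{n+1}$, hence $\ker\psi_n\subseteq\bigcap_m\eta^mK^W_{n+m}(\fF)$. Knowing that $\psi_0$ is an isomorphism does not propagate upwards.

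Likewise, $\bigcap_m\fId^m(\fF)=0$ is a statement about the target. It says nothing about separatedness of the $\eta$-adic filtration on the source. Concretely, $K^W_\ast$ is commutative and $\eta[a]=\qscal{a}-1$, so $\eta^mK^W_{n+m}(\fF)=\fId^m(\fF)\cdot K^W_n(\fF)$. But $\bigcap_m\fId^m=0$ does not imply $\bigcap_m\fId^m\cdot M=0$ for the $\Witt(\fF)$-module $M=K^W_n(\fF)$. It would if $\fId^N(\fF)=0$ for some $N$, for example for non-real fields of finite $2$-cohomological dimension. That hypothesis fails for every formally real field.

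So the missing ingredient is an actual proof that $K^W_\ast(\fF)$ has no $\eta$-torsion. This is precisely the non-formal content of Morel's theorem, and it does not follow from the data in your diagram (surjectivity, isomorphism in degrees $\le 0$, isomorphism on graded pieces, separated target).
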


We can also extend the definition to obtain the twisted $K-$theory: 

\begin{deff}[Twisted Milnor-Witt $K-$theory ring]\label{DEF - kth MW tordue}
Let $\cL$ be a 1-dimensional $\fF-$vector space and $n$ an integer, then 
\[\KMW[n]{\fF,\cL} = \KMW[n]{\fF}\otimes_{\eZ[\fF^\times]} \eZ[\cL^\times]\] 
where $\KMW{\fF}$ is a $\eZ[\fF^\times]-$module for the multiplication $a\cdot m = \qscal{a}m$ for $a \in \fF^\times$ and $m \in \KMW{\fF}$ and $\eZ[\cL^\times]$ is a module by scalar multiplication from $\fF^\times$ to $\cL^\times$.
\end{deff}

\begin{rmq}
We also extend the twisted version to the Witt $K-$theory and powers of the fundamental ideal.
\end{rmq}

An other important consideration is the Witt-valued cohomology, which is different from the cohomology with coefficients in $\fId^\ast$. The coefficients are denoted by $\cW(\cL)$ or $\bfW(\cL)$ in the literature and we will use the following characterisation:

\begin{deff}[Witt twisted coefficients]\label{DEF - coeff Witt twistes}
Let $\cL$ be a 1-dimensional $\fF-$vector space and $n$ an integer, then 
\[\bfW_n(\fF,\cL) = K^{MW}_n[\eta^{-1}](\fF)\otimes_{\eZ[\fF^\times]} \eZ[\cL^\times]\] 
where the $\eZ[\fF^\times]-$module structures are induced from the ones of \ref{DEF - kth MW tordue}.
\end{deff}
One has to be careful between $\bfW_n(\fF,\cL)$ and $K^W_n(\fF,\cL)$ because they will in general differ a bit. The aim of the next part is precisely to clarify the relations.

\subsection{Relations between $K-$theory rings}\label{SSEC - digression sur H}

To compute an Euler class in the Chow-Witt group and its degree in $\GW(\fF)$, we will break it down to a computation in the Chow group and $\fId^\bullet-$cohomology.

\begin{prop}\label{PROP - isomorphisme K-Milnor et K-MW/eta}
There is an isomorphism of $\eZ-$graded algebras between Milnor and Milnor-Witt $K$-theory rings when quotienting the later by the ideal spanned by $\eta$:
\[E : \left|\begin{array}{ccc}
    \KMW{\fF}/(\eta)& \isomr & K^M_\ast(\fF)  \\ 
    \left[a\right] & \mapsto & \accol{a}.
\end{array}\right.\]
\end{prop}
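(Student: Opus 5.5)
We will construct the map $E$ on generators, check that it is well defined, and then produce an explicit inverse. First, define a homomorphism of graded algebras $\tilde E : \KMW{\fF} \to K^M_\ast(\fF)$ on the free graded algebra on the symbols $[a]$ (degree $+1$) and $\eta$ (degree $-1$). We set $[a]\mapsto \accol{a}$ and $\eta \mapsto 0$. We must check that the relations \ref{ITM - axm MW1}--\ref{ITM - axm MW4} are sent to zero in $K^M_\ast(\fF)$:
\begin{itemize}
\item \ref{ITM - axm MW1} goes to $\accol{a,1-a}=0$, which is (M1).
\item \ref{ITM - axm MW2} goes to $\accol{ab}=\accol{a}+\accol{b}+0$, which is (M2), because the term $\eta[a,b]$ is killed.
\item \ref{ITM - axm MW3} and \ref{ITM - axm MW4} both involve $\eta$ in every term, so they map to $0=0$.
\end{itemize}
Hence $\tilde E$ is well defined. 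Since $\tilde E(\eta)=0$, it vanishes on the two-sided ideal $(\eta)$; note that $\eta$ is central by \ref{ITM - axm MW3}, so $(\eta)=\eta\KMW{\fF}$. It therefore factors through a graded algebra map $E : \KMW{\fF}/(\eta) \to K^M_\ast(\fF)$, which is surjective because the $\accol{a}$ generate $K^M_\ast(\fF)$.

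For injectivity we construct an inverse $F : K^M_\ast(\fF) \to \KMW{\fF}/(\eta)$ with $\accol{a}\mapsto \overline{[a]}$. To see that $F$ is well defined, note that in the quotient, \ref{ITM - axm MW2} reads $\overline{[ab]}=\overline{[a]}+\overline{[b]}$, which is (M2), and \ref{ITM - axm MW1} gives (M1).

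One point needs care. $K^M_\ast(\fF)$ is generated by the $\accol{a}$ as a tensor algebra modulo (M1) and (M2), whereas the generators of $\KMW{\fF}$ also include $\eta$. In the quotient $\KMW{\fF}/(\eta)$, every monomial containing $\eta$ vanishes. So the quotient is generated by the classes $\overline{[a]}$, lives in non-negative degrees, and satisfies $F\circ E=\Id$ on generators. Likewise $E\circ F=\Id$ on the generators $\accol{a}$. Both composites are algebra maps, so both are the identity.

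The main obstacle is this bookkeeping: making sure that the quotient by $(\eta)$ kills all negative-degree elements and all $\eta$-terms. This is what makes the presentation of the quotient by generators $\overline{[a]}$ and relations (M1), (M2) coincide exactly with Milnor's presentation. Centrality of $\eta$ from \ref{ITM - axm MW3} is what guarantees this.
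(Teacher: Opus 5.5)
Your proof is correct and takes the same approach as the paper. The paper's proof just observes that the generators $[a]$ and $\accol{a}$ correspond, and that modulo $\eta$ the relations (MW1)--(MW2) become (M1)--(M2) while (MW3)--(MW4) become trivial; you write out well-definedness of both maps and check that they are mutually inverse (minor remark: you do not need centrality of $\eta$ to kill monomials containing $\eta$, since $(\eta)$ is by definition a two-sided ideal).
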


\begin{proof}
The generators are the same and the Milnor-Witt relations from Definition \ref{DEF - K-th de Milnor-Witt} simplify into the ones of the Milnor $K-$theory ring when quotienting by $\eta.$
\end{proof}

If we want to extend this application to the $K-$theory rings with twists, we need twists on the Milnor $K-$theory ring. Following the previous isomorphism, the $\eZ[\fF^\times]-$module structure on $K^M_n(\fF)$ becomes $a\cdot m = m$ when $\eta$ vanishes (c.f. Definition \ref{DEF - kth MW tordue}).

\begin{cor}\label{COR - kth Milnor ne se tord pas}
The isomorphism $E$ from Proposition \ref{PROP - isomorphisme K-Milnor et K-MW/eta} can be extended to the $K-$theory rings with twists into 
\[E_\cL : \left| \begin{array}{ccc}
    \KMW{\fF,\cL}/(\eta) &\isomr  & K^M_\ast(\fF)  \\
    \left[a\right] \otimes l & \mapsto & \accol{a}.
\end{array}\right.\]
\end{cor}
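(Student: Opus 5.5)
The plan is to build $E_\cL$ as the composite of the untwisted isomorphism $E$ of Proposition \ref{PROP - isomorphisme K-Milnor et K-MW/eta} with a canonical identification $\KMW{\fF,\cL}/(\eta)\simeq \KMW{\fF}/(\eta)$. Two things are needed: first, that this identification is well defined (it ignores the twist); second, that it is bijective.

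First I compute the quotient. By Definition \ref{DEF - kth MW tordue}, $\KMW{\fF,\cL} = \KMW{\fF}\otimes_{\eZ[\fF^\times]} \eZ[\cL^\times]$, and $\eta$ acts on the left factor. Right exactness of the tensor product gives
\[\KMW{\fF,\cL}/(\eta) \simeq \left(\KMW{\fF}/(\eta)\right)\otimes_{\eZ[\fF^\times]} \eZ[\cL^\times].\]
Here the $\eZ[\fF^\times]$-module structure on $\KMW{\fF}/(\eta)$ is induced by $a\cdot m = \qscal{a}m = (1+\eta[a])m$. Modulo $\eta$ this is just $a\cdot m = m$, so $\fF^\times$ acts trivially, as the paper observes just before the statement.

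Next I identify the tensor product. Fix a nonzero $l_0\in\cL$. Then $\cL^\times = \fF^\times l_0$ is a free $\fF^\times$-torsor, so $\eZ[\cL^\times]$ is free of rank one over $\eZ[\fF^\times]$ with generator $l_0$. Hence $M\otimes_{\eZ[\fF^\times]}\eZ[\cL^\times]\simeq M$ via $m\otimes l_0\mapsto m$.

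I must check that for $M=\KMW{\fF}/(\eta)$ this map does not depend on $l_0$. For any other generator we have $m\otimes (a l_0) = (a\cdot m)\otimes l_0 = m\otimes l_0$, because the action is trivial. So the map $m\otimes l\mapsto m$ is well defined for every $l\in\cL^\times$ and is canonical. Composing with $E$ gives $[a]\otimes l\mapsto \accol{a}$, which is the formula in the statement.

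It remains to check the algebraic structure. Additivity is immediate. The grading is preserved, since the twist sits in degree $0$. If a ring structure is intended, it is the module structure over the untwisted ring, and this is clearly respected.

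The only delicate point is the independence from the choice of $l_0$. This is exactly the relation $\qscal{a}\equiv 1 \bmod \eta$, and everything else is formal.
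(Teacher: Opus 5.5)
Your proof is correct and follows the paper's argument: both reduce to $\qscal{a}=1+\eta[a]\equiv 1 \bmod \eta$, so that $m\otimes a l' = (a\cdot m)\otimes l' = m\otimes l'$ and the twist disappears. You are more careful than the paper in two places. You make explicit the right-exactness identification $\KMW{\fF,\cL}/(\eta)\simeq (\KMW{\fF}/(\eta))\otimes_{\eZ[\fF^\times]}\eZ[\cL^\times]$, and you note that $\eZ[\cL^\times]$ is free of rank one over $\eZ[\fF^\times]$, which gives the injectivity of $m\otimes l\mapsto m$ that the paper's ``all the elements are the same'' leaves implicit.
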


\begin{proof}
The only thing to check is that $K^M_\ast(\fF,\cL) \simeq K^M_\ast(\fF).$ If $l = a l'$ in $\cL,$ then 
\begin{align*}
    m\otimes l &= m \otimes al'\\
    & = (am)\otimes l'\\
    &= m\otimes l'
\end{align*}
and all the elements are the same.
\end{proof}

In particular, we have the short exact sequence of abelian groups:
\begin{align}\label{EQU - def F: MW vers M K-th}
\KMW[q+1]{\fF,\cL} \overset{\gamma_n}{\to} \KMW[q]{\fF,\cL} \overset{F}{\to} K^M_q(\fF) \to 0
\end{align}
where $\gamma_\eta$ is the multiplication by $\eta$ and $F$ is the quotient map by $(\eta)$ followed by the isomorphism $E_\cL.$

This application $F$ will give rise to a functor whose adjoint is the following construction.

First, let's consider the $\eN-$graded morphism of algebras:
\[\theta: \left|\begin{array}{ccc}
(\fF^\times)^{\otimes \ast} & \to & \KMW{\fF} \\
u_1\otimes\ldots\otimes u_q &\mapsto & h[u_1,\ldots,u_q].
\end{array}\right.
\]

\begin{lemme}
The above application $\theta$ gives rise to a morphism of $K-$theory rings
\begin{align}\label{EQU - def H: M vers MW K-th}
H: K^M_\ast(\fF) \to \KMW{\fF}.
\end{align}
\end{lemme}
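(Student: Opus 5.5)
We have to show that $\theta$ passes to the quotient by the Milnor relations, i.e. that $H(\accol{u_1,\ldots,u_q}) := h[u_1,\ldots,u_q]$ is well defined on $K^M_\ast(\fF)$. Since $K^M_\ast(\fF)$ is the tensor algebra on $\fF^\times$ modulo the two-sided ideal generated by $\accol{a,1-a}$ and by the multiplicativity relation $\accol{ab}-\accol{a}-\accol{b}$, it suffices to check two things: that $\theta$ is additive in each slot, and that $\theta$ kills any tensor containing two adjacent factors $a\otimes(1-a)$. Concretely, I will check that $\theta$ vanishes on these generators of the kernel, together with their left and right multiples by arbitrary tensors.

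\textbf{Step 1: multiplicativity relation.} By \ref{ITM - axm MW2}, $[ab]=[a]+[b]+\eta[a,b]$. Substituting into $h[u_1,\ldots,u_{i-1},ab,u_{i+1},\ldots,u_q]$ produces the two expected terms plus an error term $h\,[u_1,\ldots,u_{i-1}]\,\eta[a,b]\,[u_{i+1},\ldots]$. The error term vanishes once we move $\eta$ next to $h$. For this I use that $\eta$ is central: it commutes with each $[u]$ by \ref{ITM - axm MW3}, and $h$ lies in degree $0$. Then $\eta h=0$ follows from \ref{ITM - axm MW4}, since $h=2+\eta[-1]$. This step is additivity in each slot.

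\textbf{Step 2: Steinberg relation.} If $u_i=a$ and $u_{i+1}=1-a$, then $h[\ldots,a,1-a,\ldots]=0$ directly by \ref{ITM - axm MW1}, because $[a,1-a]=0$ appears as a factor.

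\textbf{Step 3: multiplicativity of $H$.} Since $h^2=2h$, we get $H(x)H(y)=2H(xy)$, so $H$ is not a ring map in the naive sense. I will therefore read the lemma as follows: $H$ is a well-defined graded group homomorphism, and it is a morphism of $K^M_\ast(\fF)$-modules, i.e. $H(xy)=H(x)\cdot\widetilde y$ for any lift $\widetilde y\in\KMW{\fF}$ of $y$. Such a lift exists via the surjection $F$ of (\ref{EQU - def F: MW vers M K-th}).

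To make this module statement precise, I need $h[u_1,\ldots,u_q]$ to depend only on the Milnor class of its arguments. Equivalently, $h$ must annihilate the kernel of $F$, which is the ideal $(\eta)$. This holds because $h\eta=0$. So $H(x)=h\widetilde x$ for any lift $\widetilde x$ of $x$, and this gives a very clean alternative proof. Since $(\eta)$ is a two-sided ideal and $h\cdot(\eta)=0$, multiplication by $h$ on $\KMW{\fF}$ factors through $\KMW{\fF}/(\eta)\simeq K^M_\ast(\fF)$ by Proposition \ref{PROP - isomorphisme K-Milnor et K-MW/eta}, and the resulting map is exactly $H$.

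I would actually present this factorization as the proof, with Steps 1 and 2 as a sanity check. The main subtle point is the centrality of $\eta$ and $h$ together with $\eta h=0$ (from \ref{ITM - axm MW3} and \ref{ITM - axm MW4}). Also note that $h$ in $\KMW[0]{\fF}$ is $1+\qscal{-1}=2+\eta[-1]$ under Lemma \ref{LEM - iso GW et KMW0}, and \ref{ITM - axm MW4} gives precisely $\eta h=0$. The twisted version should follow the same way, using Corollary \ref{COR - kth Milnor ne se tord pas}.
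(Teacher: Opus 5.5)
Your proposal is correct, and your Step 1 is essentially the paper's own argument. The paper only writes out $\theta((u_1u_2)\otimes v)=h[u_1u_2,v]=h[u_1,v]+h[u_2,v]+h\eta[u_1,u_2,v]$ and uses $h\eta=0$. It leaves the other slots, the higher degrees and the Steinberg relation implicit; your Steps 1--2 fill these in.

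The route you say you would actually present is different. Since $\eta$ is central by (MW3) and $h\eta=0$ by (MW4), left multiplication by $h$ kills the two-sided ideal $(\eta)=\eta\,\KMW{\fF}$. It therefore descends along the isomorphism $E\colon \KMW{\fF}/(\eta)\isomr K^M_\ast(\fF)$ of Proposition \ref{PROP - isomorphisme K-Milnor et K-MW/eta}.

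This buys several things:
\begin{enumerate}
\item You avoid checking relations one slot at a time.
\item You get for free the identities $F\circ H=2\,\Id$ and $H\circ F=\gamma_h$, which the paper records only later.
\item You also get the compatibility $H(xy)=H(x)\widetilde y$ for any lift $\widetilde y$ of $y$.
\end{enumerate}
The price is that it leans on the isomorphism $E$, whose justification in the paper is brief. The direct check, by contrast, uses only the defining relations.

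Your remark in Step 3 is right and worth keeping. We have $H(1)=h$ and $H(x)H(y)=h^2\widetilde x\widetilde y=2H(xy)$, so $H$ is not a ring homomorphism. The lemma's phrase ``morphism of $K$-theory rings'', like the later description of $H$ as a $\eZ$-graded algebra homomorphism, can only be meant as a graded additive map compatible with the module structures you describe.
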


\begin{proof}
By looking in degree at least $2$ and computing the image of $(u_1u_2)\otimes v$, we get 
\begin{align*}
    \theta((u_1u_2)\otimes v)& = h[u_1u_2,v]\\
    &= h[u_1,v] + h[u_2,v] + h\eta [u_1,u_2,v]\\
    &= \theta(u_1 \otimes v) + \theta(u_2\otimes v).
\end{align*}
Hence, a factorisation through the relations of Milnor $K-$theory ring and we note $H$ the induced morphism.
\end{proof}

With the relations in the Milnor-Witt $K-$theory one can deduce that $\qscal{u}h = h$ for any $u \in \fF^\times$ and thus an invariance of the multiplication by $h$ at the level of the twisted cohomology:

\begin{prop}\label{PROP - foncteur H bien def indep torsion}
For $\cL$ a 1-dimensional $\fF-$vector space, the twisted hyperbolic application
\[H:\left| \begin{array}{ccc}
     K^M_\ast(\fF)& \to& \KMW{\fF,\cL} \\
    \sigma & \mapsto & (h\sigma)\otimes l 
\end{array}\right. \]
is well defined and does not depend on the choice of $l$ in $\cL^\times$.
\end{prop}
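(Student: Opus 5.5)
The plan is to define $H$ as the composite of the already constructed untwisted map $H: K^M_\ast(\fF) \to \KMW{\fF}$ from \eqref{EQU - def H: M vers MW K-th} with the map $\KMW{\fF} \to \KMW{\fF,\cL}$, $m \mapsto m\otimes l$, for a fixed $l\in\cL^\times$. The latter is a well-defined group homomorphism because $\KMW{\fF,\cL} = \KMW{\fF}\otimes_{\eZ[\fF^\times]}\eZ[\cL^\times]$ is a tensor product and $m\mapsto m\otimes l$ is additive in $m$. Well-definedness of $H$ on Milnor $K$-theory, meaning compatibility with relations (M1) and (M2), is therefore inherited from the previous lemma. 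The only remaining point is independence of the choice of $l$.

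The key algebraic input is the identity $\qscal{u}h = h$ in $\KMW[0]{\fF}$ for every $u\in\fF^\times$. I would verify it as follows. Under the isomorphism of Lemma \ref{LEM - iso GW et KMW0}, we have $\qscal{u}h = \qscal{u}+\qscal{-u}$ in $\GW(\fF)$. This equals $\qscal{1}+\qscal{-1}=h$ because the form $uX^2 - uY^2$ is isotropic, hence hyperbolic. Alternatively, one can compute directly that $\qscal{u}h = h + \eta[u]h = h$, using $\eta h = 0$ (which is a restatement of (MW4)) together with commutativity of $\eta$ with $[u]$ from (MW3). From this identity it follows that $\qscal{u}\,h\sigma = h\sigma$ for any $\sigma$ in the image of $\theta$, since $h$ is central: $\KMW{\fF}$ is $\epsilon$-graded commutative and degree-$0$ elements such as $\qscal{u}$ and $h$ commute with everything.

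For independence of $l$, take another generator $l' \in \cL^\times$. Since $\cL$ is one-dimensional, $l' = a l$ for some $a\in\fF^\times$. By the definition of the $\eZ[\fF^\times]$-module structures in Definition \ref{DEF - kth MW tordue},
\[(h\sigma)\otimes l' = (h\sigma)\otimes a l = (\qscal{a}h\sigma)\otimes l = (h\sigma)\otimes l,\]
which proves the claim. This is the same manipulation as in the proof of Corollary \ref{COR - kth Milnor ne se tord pas}, with $\qscal{a}h=h$ playing the role that $a\cdot m = m$ played there.

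I expect the only delicate step to be checking that $\qscal{a}h = h$ holds in the right sense. The symbol $h\sigma$ should be understood as $h$ multiplied by the Milnor–Witt lift $[u_1,\ldots,u_q]$. That lift is not itself well-defined modulo the Milnor relations, but its product with $h$ is, by the preceding lemma. The $\qscal{a}$-action must therefore be applied after that identification, which is harmless since the action is by left multiplication in the ring $\KMW{\fF}$.
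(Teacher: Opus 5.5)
Your proposal is correct and matches the paper's proof: the paper also writes $l' = al$ and computes $(h\sigma)\otimes al = (\qscal{a}h\sigma)\otimes l = (h\sigma)\otimes l$, using $\qscal{a}h = h$. Like you, it gets well-definedness on $K^M_\ast(\fF)$ from the preceding untwisted lemma. Your derivation of $\qscal{u}h=h$ from (MW3) and (MW4), and your remark that $h\sigma$ must be read as the untwisted $H(\sigma)$, only spell out steps the paper asserts without proof.
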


\begin{proof}
It is essentially by definition (c.f. \ref{DEF - kth MW tordue}). Choosing another vector is choosing a scalar $a$ such that $l' = al$. Hence 
\[(h\sigma)\otimes l' = (h\sigma)\otimes al = (\qscal{a}h\sigma)\otimes l =  (h\sigma)\otimes l.\]
\end{proof}

\begin{deff}[Forgetful and hyperbolic applications]\label{DEF - applications oubli et hyperbolique}
The two $\eZ-$graded algebras homomorphisms $F: \KMW{\fF,\cL} \to K^M_\ast(\fF)$ and $H: K^M_\ast(\fF) \to \KMW{\fF,\cL}$ (c.f. (\ref{EQU - def F: MW vers M K-th}) and Proposition \ref{PROP - foncteur H bien def indep torsion}) are called the forgetful and hyperbolic homomorphisms. They are functorial with respect to the field.
\end{deff}

\begin{rmq}
By definition, $F$ is characterised by $F(\eta) = 0$ and $F([a]\otimes l) = \accol{a}$. $H$ is given by the multiplication by $h$ thus $H(\accol{a}) = h[a]$.

Moreover, the compositions give $F\circ H = 2 \Id$ and $H \circ F = \gamma_h$ (the multiplication by $h$). 
\end{rmq}

\subsection{Cohomology and Rost-Schmid complexes}

This is mostly a very quick reminder of the objects we consider. More details and especially the construction of the residue morphisms can be found for example in \cite[Section 2]{fasel_Chow-witt}.

\begin{deff}[Rost complex]\label{DEF - complexe de Rost}
Let $X$ be a smooth scheme of finite type over $\fF$ a perfect field. For $j$ in $\eZ$, one can define the complex:
\[\xymatrix{R_j(X): \ldots \ar[r] & \underset{p \in X^{(i)}}{\bigoplus} K^M_{j-i}(\kappa(p)) \ar[r]^{d^i_j} & \underset{q \in X^{(i+1)}}{\bigoplus} K^M_{j-i-1}(\kappa(q)) \ar[r] & \ldots}\]
where $X^{(i)}$ is the set of codimension $i$ points in $X$ and $d^i_j$ are residue morphisms.
\end{deff}

\begin{prop}[\cite{Rost_Chowgrps}] 
The cohomology of the Rost complex $A^i(X,j) = \ker(d^i_j)/\img(d^{i-1}_j)$ gives back the Chow groups of codimension $i$ sub-varieties in $X$:
\[\Chowct[i]{X} \simeq A^i(X,i).\]
\end{prop}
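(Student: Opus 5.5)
The plan is to identify the Rost complex in the three relevant spots with the classical presentation of the Chow group by cycles modulo rational equivalence (as in Fulton), and then compare the differentials. I would proceed in three steps.

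First, I look at the terms around position $i$ of $R_i(X)$. In position $i$ the term is $\bigoplus_{p\in X^{(i)}} K^M_0(\kappa(p)) = \bigoplus_{p\in X^{(i)}}\eZ$, which is exactly the group $Z^i(X)$ of codimension $i$ cycles, a point $p$ corresponding to the closure $\overline{\{p\}}$. In position $i+1$ the term is $\bigoplus_{q\in X^{(i+1)}} K^M_{-1}(\kappa(q)) = 0$, since Milnor $K$-theory vanishes in negative degrees. Hence $d^i_i = 0$ and $\ker(d^i_i) = Z^i(X)$.

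Second, I look at position $i-1$. There the term is $\bigoplus_{p\in X^{(i-1)}} K^M_1(\kappa(p)) = \bigoplus_{p \in X^{(i-1)}} \kappa(p)^\times$, i.e. rational functions $f$ on integral subvarieties $W=\overline{\{p\}}$ of codimension $i-1$. So $\img(d^{i-1}_i)$ is generated by the elements $d^{i-1}_i(f)$ for such pairs $(W,f)$.

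Third, and this is the only real point, I must show that $d^{i-1}_i(f)=\operatorname{div}_W(f)$ as a cycle on $X$. Recall how the residue is built:
\begin{enumerate}
\item one passes to the normalization $\widetilde W\to W$;
\item for each codimension one point $q'$ of $\widetilde W$ lying over $q\in X^{(i)}$, one takes the tame symbol $\partial_{v_{q'}}\colon K^M_1(\kappa(p))\to K^M_0(\kappa(q'))$, which sends $f$ to $v_{q'}(f)$;
\item one pushes forward along the finite extension $\kappa(q')/\kappa(q)$, which in degree $0$ is multiplication by $[\kappa(q'):\kappa(q)]$.
\end{enumerate}
Thus the coefficient of $q$ is $\sum_{q'\mapsto q}[\kappa(q'):\kappa(q)]\,v_{q'}(f)$. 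By Fulton, Example 1.2.3, this equals $\operatorname{ord}_q(f)$, where $\operatorname{ord}_q(f)$ is defined through lengths of $\cO_{W,q}/(a)$ for $f=a/b$. That identity is a length computation for the semilocal ring of the normalization over the one-dimensional local domain $\cO_{W,q}$, using finiteness of normalization; this holds because $X$ is of finite type over a field.

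Putting the steps together, $\img(d^{i-1}_i)$ is precisely the subgroup of cycles rationally equivalent to zero. Therefore $A^i(X,i)=Z^i(X)/\mathrm{Rat}^i(X)=\Chowct[i]{X}$.

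The main obstacle is the third step, namely matching the normalization-based residue with Fulton's length-based order of vanishing. I would handle it by citing Fulton's lemma rather than reproving it, after checking that the paper's residue conventions (from \cite{fasel_Chow-witt}) agree with Rost's.
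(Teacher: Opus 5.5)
Your proposal is correct. The paper gives no proof of its own and simply cites \cite{Rost_Chowgrps}; your argument is the standard one behind that result. The codimension-$i$ cycles sit in the $K^M_0$ spot with zero outgoing differential (since $K^M_{-1}=0$), and the incoming differential $K^M_1\to K^M_0$, built from valuations on the normalization followed by degree-multiplying norms, is Fulton's divisor map by his Example 1.2.3.

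One point you leave implicit should be stated. Because $X$ is smooth, hence locally equidimensional and catenary, the codimension-one points of $W=\overline{\{p\}}$ are exactly the points of $X^{(i)}$ lying in $W$. This is what makes the codimension indexing of the Rost complex match Fulton's definition of rational equivalence.
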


For the other cohomologies, we need some twisting by sheaves on points. So we will consider, for $X$ a smooth scheme of finite type over $\fF$ and $x$ a point in $X$:
\begin{itemize}
    \item the normal sheaf $\cN_{x,X} := \left(\cI_x/\cI_x^2\right)^\vee$ with $\cI_x \subseteq \cO_{X,x}$ the ideal sheaf of $x$,
    \item the invertible sheaf $\nu_x$ as the determinant of $\cN_{x,X}$ and
    \item  $\rest{\cL}{x}$ the tensor product $\cL_x\otimes_{\cO_{X,x}} \kappa(x)$ for $\cL$ an invertible $\cO_X-$module.
\end{itemize}

\begin{deff}[Rost-Schmid complex]\label{DEF - cplx de Rost-Schmid}
For $j$ in $\eZ$ and $\cL$ an invertible $\cO_X-$module, one can define the Rost-Schmid complex associated to $X,\ j$ and $\cL$ by:
\[\xymatrix{\ldots \ar[r]&  \RostS[i]{X}{j}{\cL} \ar[r]^{d^i_{X,j,\cL}} & \RostS[i+1]{X}{j}{\cL} \ar[r] & \ldots,}\]
with:
\[\RostS[i]{X}{j}{\cL} = \bigoplus_{x \in X^{(i)}} \KMW[j-i]{\kappa(x), \nu_x\otimes_{\kappa(x)}\rest{\cL}{x}}\]
and $d^i_{X,j,\cL}$ the differential of the complex.
\end{deff}

\begin{rmq}
We will note $\RostSO{X}{j}$ for $\RostS{X}{j}{\cO_X}$. We can also extend the definition of the Rost-Schmid complex to the quotient and localised $K-$theories introduced before.
\end{rmq}

\begin{rmq}\label{RMQ - complexe Rost-Schmid est Rost pour Milnor}
One can see that the Rost-Schmid complex for the Milnor-Witt $K-$theory ring with twists leads to a Rost-Schmid complex on Milnor $K-$theory ring by quotienting by $(\eta).$ From Corollary \ref{COR - kth Milnor ne se tord pas}, there is no need consider twists for Milnor $K-$theory rings and it is just the previous Rost complex.
\end{rmq}

\begin{deff}[Rost-Schmid cohomology groups]\label{DEF - grps de Rost-Schmid}
For $i,j$ integers and $\cL$ an invertible $\cO_X-$module, the $i^{\text{th}}$ cohomology group associated to the Rost-Schmid complex $\RostS{X}{j}{\cL}$ is 
\[\RSgrp{i}{X}{j}{\cL}:= \ker(d^i_{X,j,\cL})/\img(d^{i-1}_{X,j,\cL}).\]
We will also write $\RSgrpO{i}{X}{j}$ for the cohomology of $\RostSO{X}{j}$.
\end{deff}

\begin{deff}[Chow-Witt groups]\label{DEF - groupes de Chow-Witt}
Let $i$ be an integer and $\cL$ an invertible $\cO_X-$module. The  $i^{\text{th}}$ Chow-Witt group associated to $X$ and $\cL$ is:
\[\Chowtilde[i]{X,\cL}:= \RSgrp{i}{X}{i}{\cL}.\]
We will note $\Chowtilde[i]{X}$ for $\Chowtilde[i]{X,\cO_X}$.
\end{deff}

\begin{rmq}
For $i$ negative or greater than $\dim X$, the Rost-Schmid complex is constant. Thus, the cohomology groups $\RSgrp{i}{X}{j}{\cL}$ and $\Chowtilde[i]{X,\cL}$ are trivial.
\end{rmq}

Following this definition, we can change the $K-$theory to the Witt one or the $\fId^\bullet-$cohomology. The fact that the Rost-Schmid complex is still a complex with this change of coefficients can be found for example in \cite[Théorème 9.2.4]{Fasel_Chow-Witt_SMF}. We get 
 \[H^i(X,\fId^i(\cL)) \simeq H^i(X,\underline{K}^W_i(\cL)).\]
 
\begin{rmq}
We will not write $\bfW^i(X,\cL)$ for $H^i(X,\underline{K}^W_i(\cL))$ to avoid the conflict of notations with the cohomology with coefficients in the Witt sheaf, which is isomorphic to $H^i(X,\underline{K}^{MW}_i[\eta^{-1}](\cL))$ (c.f. Definition \ref{DEF - coeff Witt twistes}).
\end{rmq}

\begin{deff}[Witt cohomology groups]\label{DEF - groupes cohomologie Witt}
The Witt cohomology groups of a scheme $X$, with respect to an invertible sheaf $\cL$ on it are the groups
\[\bfW^i(X,\cL):= H^i(X,\bfW_i(\cL))\]
with $\bfW_i(\cL)$ the sheaf associated to $\bfW_i(\kappa(x),\nu_x\otimes_{\kappa(x)}\rest{\cL}{x})$ as for the $K^{MW}$ sheaf. An explanation for how the Gersten-type complex is still a complex when changing coefficients can be found in \cite[Section 3]{Levine_euler-enumerative}.
\end{deff}

With these definitions of cohomology groups, we can find relations using the ones on the graded rings (c.f. part \ref{SSEC - digression sur H}). In particular,  the functors defined in \ref{DEF - applications oubli et hyperbolique} extend uniquely to the twisted $K-$theories and so to the Rost(-Schmid) complexes, thus:

\begin{prop}\label{PROP - fonction hyperbolique bien def torsion}
For $n$ a non-negative integer, $X$ a smooth $\fF-$scheme and $\cL$ a locally free sheaf of rank 1 over $X,$ the hyperbolic application $H: \Chowct[n]{X} \to \Chowtilde[n]{X,\cL}$ of multiplication by $h$ is well defined and makes the following diagram commute:
\[\xymatrix{\Chowct[n]{X}\ar[d]_{H} \ar@{=}[r] & \Chowct[n]{X} \ar[d]^{\times 2}\\
\Chowtilde[n]{X,\cL} \ar[r]^F & \Chowct[n]{X}. }\]
\end{prop}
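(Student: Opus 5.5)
The plan is to build $H$ and $F$ on the level of Rost(-Schmid) complexes and check that they are morphisms of complexes. Then we pass to cohomology in degree $(n,n)$. For each point $x\in X^{(i)}$ we have the field-level maps
\[H_x : K^M_{n-i}(\kappa(x)) \to \KMW[n-i]{\kappa(x),\nu_x\otimes\rest{\cL}{x}},\qquad F_x: \KMW[n-i]{\kappa(x),\nu_x\otimes\rest{\cL}{x}}\to K^M_{n-i}(\kappa(x)).\]
These are the maps of Proposition \ref{PROP - foncteur H bien def indep torsion} and Corollary \ref{COR - kth Milnor ne se tord pas}. They are well defined independently of the chosen generator of the twisting line, so they glue into graded maps $H:R_n(X)\to \RostS{X}{n}{\cL}$ and $F:\RostS{X}{n}{\cL}\to R_n(X)$ degreewise. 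The commutativity $F\circ H = 2\,\Id$ is already true on each summand by the remark following Definition \ref{DEF - applications oubli et hyperbolique}: $F(h\sigma\otimes l) = F(h)\,\sigma = 2\sigma$, since $F(\qscal{-1}) = F(1+\eta[-1]) = 1$. Hence, once both maps are chain maps, the square commutes already at the level of complexes, and therefore on $\Chowct[n]{X}=A^n(X,n)$ and $\Chowtilde[n]{X,\cL}$.

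For $F$, compatibility with differentials is Remark \ref{RMQ - complexe Rost-Schmid est Rost pour Milnor}. The twisted residue maps of the Rost-Schmid complex reduce modulo $\eta$ to the Milnor residue maps, and the twists disappear by Corollary \ref{COR - kth Milnor ne se tord pas}. So $F$ is the quotient map of complexes $\RostS{X}{n}{\cL}\to \RostS{X}{n}{\cL}/\eta = R_n(X)$.

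For $H$, I check that $d\circ H = H\circ d$ componentwise. The differential is a sum, over specialisations $x\rightsquigarrow y$ with $y$ of codimension one in $\overline{\{x\}}$, of twisted residue maps $\partial_y^x$. After passing to the normalisation, these are composed with transfers along the finite extensions of residue fields. I would use two facts from the construction in \cite[Section 2]{fasel_Chow-witt}. First, the residue maps and transfers are $\KMW[0]{}$-linear in the appropriate sense: $\partial_v(h\alpha) = h\,\partial_v(\alpha)$, because $h = 1+\qscal{-1}$ comes from the base field, where $\partial_v(\qscal{-1}\alpha)=\qscal{-1}\partial_v(\alpha)$ as $-1$ is a unit. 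Second, the Milnor-Witt residue lifts the Milnor residue, meaning $F\circ\partial^{MW} = \partial^M\circ F$. Given $\sigma\in K^M_{m}(\kappa(x))$, choose a lift $\tilde\sigma$ in $\KMW[m]{\kappa(x)}$, for instance by replacing symbols $\accol{a}$ with $[a]$. Then $H(\sigma) = h\tilde\sigma\otimes l$, and this is independent of the lift since $h\eta=0$. Hence
\[\partial(h\tilde\sigma\otimes l) = h\,\partial(\tilde\sigma)\otimes l',\]
and $\partial(\tilde\sigma)$ is a lift of $\partial^M(\sigma)$. So this equals $H(\partial^M\sigma)$, and the twist choice is again irrelevant by Proposition \ref{PROP - foncteur H bien def indep torsion}.

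The main obstacle I expect is this last step. One has to verify carefully that the twisted residue and transfer maps commute with multiplication by $h$ even when the twists $\nu_x\otimes\rest{\cL}{x}$ and $\nu_y\otimes\rest{\cL}{y}$ are identified via a choice of uniformiser and local generators. My approach is to note that every such identification changes elements only by factors $\qscal{u}$, and that $\qscal{u}h=h$ absorbs them. Also $h\eta = 0$ kills the correction terms in the relation \ref{ITM - axm MW2} that distinguish Milnor-Witt from Milnor residues. Once $H$ is a chain map it induces the hyperbolic map on $H^n(-,n)$, and the commutative square follows from $F\circ H = 2$ at the chain level.
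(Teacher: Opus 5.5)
Your proposal is correct and follows the same route as the paper. Both extend the field-level forgetful and hyperbolic maps pointwise to the Rost--Schmid complexes, use that the complex modulo $\eta$ is the Rost complex, and read off $F\circ H = 2$ summandwise. The paper states this in one line; your check that $H$ is a chain map fills in the step it leaves implicit, using $H(\sigma)=h\tilde\sigma$ for any lift $\tilde\sigma$, $h\eta=0$, the $\GW(\fF)$-linearity of residues and transfers, and the fact that $F$ commutes with the differentials.
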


\begin{proof}
It is a direct application of Definition \ref{DEF - applications oubli et hyperbolique} to the Rost-Schmid complexes and the simplification of the complex for the Milnor $K-$theory ring as stated in Remark \ref{RMQ - complexe Rost-Schmid est Rost pour Milnor}.
\end{proof}

\subsection{Chow-Witt Euler classes computation}\label{SSEC - calcul classes Euler}

In the case where $\cE$ is a locally free sheaf of rank $n = \dim X$ over $X$, the Euler class $e^{CW}(\cE)$ (c.f. Definition \ref{DEF - Euler class with coefficients} where we take the Milnor-Witt spectrum as coefficients) is in general hard to compute. We will use the relations and commutative diagram \ref{PROP - fonction hyperbolique bien def torsion} to describe $\Chowtilde[n]{X,\cL}$ as a fiber product. One can complete the above construction \ref{SSEC - digression sur H} in the following cohomological statement:

\begin{align}\label{EQU - diagramme clef}
    \xymatrix{& \Chowct[n]{X}\ar[d]_{H} \ar@{=}[r] & \Chowct[n]{X} \ar[d]^{\times 2}& \\
H^n(X,\fId^{n+1}(\cL)) \ar[r] \ar@{=}[d] & \Chowtilde[n]{X,\cL} \ar[r] \ar[d]^{\mod h} & \Chowct[n]{X} \ar[r]^{\hspace{-0.5cm} \partial} \ar[d]^{\mod 2} & H^{n+1}(X,\fId^{n+1}(\cL)) \ar@{=}[d]\\
H^n(X,\fId^{n+1}(\cL)) \ar[r]_\eta & H^n(X,\fId^{n}(\cL))\ar[d] \ar[r]_\rho & \Chd[n]{X} \ar[r]^{\hspace{-0.5cm} \beta} \ar[rd]_{\Sq^2} \ar[d] & H^{n+1}(X,\fId^{n+1}(\cL)) \ar[d]^\rho\\
& 0&0& \Chd[n+1]{X}}
\end{align}
where the vertical map $\Chowtilde[n]{X,\cL} \to H^n(X,\fId^{n}(\cL))$ is induced by the quotient by $(h)$ and $\rho$ is induced by the quotient $\fId^n \to \fId^n/\fId^{n+1}.$ On the right, $\partial$ and $\beta$ are the connecting homomorphisms of the long exact sequences deduced from
\[0 \to \fId^{n+1} \xrightarrow{\eta} K^{MW}_n \to K^M_n \to 0\]
and
\[0 \to \fId^{n+1} \xrightarrow{\eta} \fId^n \to \fId^n/\fId^{n+1} \to 0.\]
The Steenrod square $\Sq^2$ is then the composition $\rho \beta.$ A construction of this diagram and the study of the morphisms can be found in \cite{HW17}.

\begin{prop}\label{PROP - CW comme image H et Witt}
Let $X$ be a smooth $n-$dimensional scheme and $\cL$ an invertible $\cO_X-$module such that
\begin{itemize}
    \item $\Chowct[n]{X}$ has no non-trivial $2-$torsion, or
    \item $H^n(X,\fId^{n+1}(\cL)) \xrightarrow{\gamma_\eta} H^n(X,\fId^{n}(\cL))$ is injective.
\end{itemize} A class in $\Chowtilde[n]{X,\cL}$ is the data of its image in $H^n(X,\underline{K}^W_n(\cL))$ and of an element  entirely determined by the value of the induced class in $\Chowct[n]{X}.$ More precisely, we have
\[\Chowtilde[n]{X,\cL} \simeq H^n(X,\sKW_n(\cL))\times_{\Chd[n]{X}}\Chowct[n]{X}.\]
\end{prop}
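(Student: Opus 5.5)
We use the key diagram (\ref{EQU - diagramme clef}) and show that the map
\[\Phi: \Chowtilde[n]{X,\cL} \to H^n(X,\sKW_n(\cL))\times_{\Chd[n]{X}}\Chowct[n]{X},\qquad \alpha\mapsto (\alpha \bmod h,\ F(\alpha)),\]
is an isomorphism. It is well defined because the central square commutes: both composites $\Chowtilde[n]{X,\cL}\to\Chd[n]{X}$ equal reduction modulo $(\eta,h)$, which is reduction to $\fId^n/\fId^{n+1}\simeq K^M_n/2$. Here we use the identification $H^n(X,\fId^n(\cL))\simeq H^n(X,\sKW_n(\cL))$ recalled after Definition \ref{DEF - groupes de Chow-Witt}. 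We will check injectivity and surjectivity separately; this is the standard argument that a map of short exact sequences with the same kernel gives a pullback square.

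\textbf{Surjectivity.} Take a pair $(w,c)$ with $\rho(w)=c \bmod 2$. Since $\Chowtilde[n]{X,\cL}\to H^n(X,\fId^n(\cL))$ is surjective (the $0$ in the diagram, as $H^{n+1}$ vanishes in top degree), choose a lift $\tilde\alpha$ of $w$. Then $F(\tilde\alpha)$ and $c$ have the same image mod $2$, so $c-F(\tilde\alpha)=2d$ for some $d\in\Chowct[n]{X}$. By Proposition \ref{PROP - fonction hyperbolique bien def torsion}, the class $H(d)=hd$ satisfies $F(hd)=2d$, and $hd\equiv 0 \bmod h$. Hence $\alpha=\tilde\alpha+hd$ maps to $(w,c)$. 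This step needs no hypothesis.

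\textbf{Injectivity.} This is where the hypotheses enter, and I expect it to be the main obstacle. Suppose $\alpha\bmod h=0$ and $F(\alpha)=0$.

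From $F(\alpha)=0$ and the exactness of the middle row, $\alpha=\eta\beta$ for some $\beta\in H^n(X,\fId^{n+1}(\cL))$. Then $0=\alpha\bmod h=\gamma_\eta(\beta)$ in $H^n(X,\fId^n(\cL))$.

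\emph{Second hypothesis.} If $\gamma_\eta$ is injective, then $\beta=0$, so $\alpha=0$.

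\emph{First hypothesis.} Argue from the other column instead. Since $\alpha\bmod h=0$, and the kernel of $\Chowtilde[n]{X,\cL}\to H^n(X,\fId^n(\cL))$ is the image of $H$ (exactness of $\KMW{}\xrightarrow{h}\KMW{}\to K^W$, passed to top cohomology by right exactness), we get $\alpha=hd$. Then $0=F(\alpha)=2d$, so $d$ is $2$-torsion and hence $d=0$ in $\Chowct[n]{X}$. Therefore $\alpha=0$.

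The delicate point is justifying that $\ker(\bmod h)=\img H$ on $\Chowtilde[n]{}$ in top degree. I plan to check it at the level of the Rost–Schmid complexes: in degree $n$ the group $\RostS[n]{X}{n}{\cL}$ consists of $\GW$'s of closed points, twisted, so cokernels commute with the tensor product. It remains to verify that $h\cdot\GW=\ker(\GW\to\Witt)$ pointwise and then pass to the cokernel of $d^{n-1}$ (right exactness of cokernels).
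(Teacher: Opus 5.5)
Your proof is correct. It differs from the paper's only in that the paper gives no argument of its own: it notes that the key diagram (\ref{EQU - diagramme clef}) degenerates in top codimension and then invokes \cite[Proposition 2.11]{HW17}, a general statement not restricted to top codimension.

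You instead carry out a self-contained chase in the degenerate diagram, in three steps:
\begin{itemize}
\item \emph{Surjectivity.} You lift $w$ along the surjection $\Chowtilde[n]{X,\cL}\to H^n(X,\fId^n(\cL))$ and then correct the Chow component by $hd$, using $F\circ H=2$. This needs no hypothesis.
\item \emph{Injectivity under the $\eta$-hypothesis.} This is the direct chase through the middle row.
\item \emph{Injectivity under the torsion hypothesis.} You bypass the $\eta$-row entirely. You use exactness of the column $\Chowct[n]{X}\xrightarrow{H}\Chowtilde[n]{X,\cL}\to H^n(X,\fId^n(\cL))$ together with $F(hd)=2d$.
\end{itemize}

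Your planned justification of that column exactness is the right one. To make it airtight, state explicitly the two inputs it uses:
\begin{itemize}
\item at each closed point $x$, the kernel of $\GW(\kappa(x),\nu_x\otimes\rest{\cL}{x})\to\Witt(\kappa(x),\nu_x\otimes\rest{\cL}{x})$ is $\eZ\, h$, independently of the twist, by Proposition \ref{PROP - foncteur H bien def indep torsion};
\item the map $\RostS[n-1]{X}{n}{\cL}\to\cC^{n-1}(X,\sKW_n(\cL))$ is surjective, so a boundary in the Witt complex lifts to a boundary in the Milnor--Witt complex.
\end{itemize}

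Your version is more transparent. It isolates exactly where each hypothesis enters, and it shows that in the torsion-free case nothing beyond $F\circ H=2$ and right exactness of cokernels is needed. The citation buys generality beyond top codimension, where surjectivity onto $H^n(X,\fId^n(\cL))$ and the equality $\ker\partial=\Chowct[n]{X}$ are no longer automatic, but it hides the mechanism.
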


\begin{proof}
In maximal codimension, the diagram simplifies to
\[\xymatrix{& \Chowct[n]{X}\ar[d]_{H} \ar@{=}[r] & \Chowct[n]{X} \ar[d]^{\times 2}& \\
H^n(X,\fId^{n+1}(\cL)) \ar[r] \ar@{=}[d] & \Chowtilde[n]{X,\cL} \ar[r] \ar[d] & \Chowct[n]{X} \ar[r] \ar[d]^{\mod 2} & 0\\
H^n(X,\fId^{n+1}(\cL)) \ar[r]_\eta & H^n(X,\fId^{n}(\cL))\ar[d] \ar[r]_\rho & \Chd[n]{X} \ar[r] \ar[d] & 0\\
& 0&0&}\]
It is then a direct application of \cite[Proposition 2.11]{HW17} and the central square is a fibre product.
\end{proof}

\begin{rmq}
Until now, we did not look at the relative orientation. The Witt class considered in the above \ref{PROP - CW comme image H et Witt} needs the choice of an orientation. Usually, if the sheaf is relatively orientable we bypass this by taking the degree (c.f. \ref{DEF - degre quadratique}).
\end{rmq}

\begin{prop}\label{PROP - chowtilde non orientable est chow}
Let $X$ be isomorphic to the $d-$dimensional Grassmannian $\Grass(k,n)$ and $Y = \eP(\cF)\to X$ a projective bundle of rank $r$ over it. Assume either
\begin{itemize}
    \item $r$ and $n$ are odd or
    \item $r$ is even and not $0$.
\end{itemize}
If $\cL$ is an invertible $\cO_Y-$module not quadratically equivalent to $\omega_{Y/\fF}$, then
\[\Chowtilde[r+d]{Y,\cL} \isomr \Chowct[r+d]{Y} \simeq \eZ\cdot [pt],\]
where $[pt]$ is the class of a rational point.
\end{prop}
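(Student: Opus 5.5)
Set $N = r+d = \dim Y$; I will treat $r$ as the relative dimension of $Y\to X$, matching the dimension count. The plan is to apply Proposition \ref{PROP - CW comme image H et Witt} to $Y$ in top degree $N$. This reduces the claim to two facts: first, that $\Chowct[N]{Y}\simeq \eZ\cdot[pt]$ has no $2$-torsion; second, that the Witt-type group $H^N(Y,\sKW_N(\cL))\simeq H^N(Y,\fId^N(\cL))$ vanishes whenever $\cL\not\simquad\omega_Y$. Once both hold, the fibre product $H^N(Y,\sKW_N(\cL))\times_{\Chd[N]{Y}}\Chowct[N]{Y}$ collapses to $\Chowct[N]{Y}$. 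The isomorphism is then the forgetful map $F$ of the key diagram (\ref{EQU - diagramme clef}).

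The first fact is classical. $X$ has a cellular decomposition by Schubert cells, and the projective bundle formula gives $\Chowct{Y}$ free over $\Chowct{X}$ with basis the powers of $c_1(\cO_Y(1))$. Hence $\Chowct[N]{Y}$ is free of rank one, generated by the class of a rational point.

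For the second fact I would avoid the Witt cohomology of $Y$ itself and work with the exact sequence in top degree. There, $\Chowtilde[N]{Y,\cL}\to\Chowct[N]{Y}\to 0$ is exact with kernel the image of $H^N(Y,\fId^{N+1}(\cL))$. I would compute $H^N(Y,\bfW(\cL))$, or directly $H^N(Y,\fId^N(\cL))$, using a cellular (Schubert-type) stratification of $Y$ pulled back from that of $X$. Over each open cell, $Y$ is a product of an affine space with $\eP^{r}$, and the top-degree Witt cohomology is concentrated on the closed point. Twisted Witt cohomology of projective space is known: $H^{r}(\eP^{r},\bfW(\cO(j)))$ is $\Witt(\fF)$ exactly when $\cO(j)\simquad\omega_{\eP^r}$ and zero otherwise. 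So the top-degree group of $Y$ is generated by the point class, and that class survives only if the twist is compatible with $\omega_Y$. Concretely, I would use pushforward along a rational point $i:\Spec{\fF}\to Y$. Every top-degree class is a multiple of $i_\ast$ applied to an element of $\GW(\fF)$ twisted by $\cL|_{pt}\otimes\omega_Y^\vee|_{pt}$. To show the Witt part dies, I would find a curve $C\cong\eP^1$ through the point along which $\cL\otimes\omega_Y^\vee$ has odd degree. The residue of $[t]$ on $C$ then shows that $\qscal{1}$ times the point equals $\qscal{-1}$ times the point, the sign coming from the twist. So $(1-\qscal{-1})[pt]=0$, which kills the $\Witt$-component since $\Witt$ is detected modulo $h$. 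In fact I would do this check directly in $\Chowtilde[N]{Y,\cL}$ to show that the kernel of $F$ vanishes.

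The main obstacle is producing such an odd-degree line. Using equation (\ref{EQU - omega fibre projectif}) and Proposition \ref{PROP - orientabilite grassmann}, $\Picard{Y}=\eZ\,\cO_Y(1)\oplus\eZ\,\pi^\ast\cO_X(1)$. Here $\omega_Y\simeq\cO_Y(-(r+1))\otimes\pi^\ast(\det\cF\otimes\cO_X(-n))$, and "not quadratically equivalent" means that $\cL\otimes\omega_Y^\vee$ has an odd coefficient in one of the two generators. If the odd coefficient is on $\cO_Y(1)$, I use a line in a fibre $\eP^r$ (when $r>0$). If it is on $\pi^\ast\cO_X(1)$, I use a section over a Schubert line in $X$, with degree adjusted using the fibre line. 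The parity hypotheses (either $r$ even and nonzero, or $r$ and $n$ odd) should be exactly what makes this case analysis exhaustive, and I expect that checking this precisely will be the delicate part.
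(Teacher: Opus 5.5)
Your first reduction has a real gap. The group $H^N(Y,\sKW_N(\cL))\simeq H^N(Y,\fId^N(\cL))$ is never $0$. In top degree, $\rho\colon H^N(Y,\fId^N(\cL))\to\Chd[N]{Y}$ is surjective, because its cokernel injects into $H^{N+1}(Y,\fId^{N+1}(\cL))=0$. Moreover $\Chd[N]{Y}=\eZ/2$, since $\Chowct[N]{Y}=\eZ$. The paper in fact shows this group is exactly $\eZ/2$.

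Even if it did vanish, the fibre product of Proposition \ref{PROP - CW comme image H et Witt} would be $0\times_{\eZ/2}\eZ=2\eZ$. Then $F$ would not be onto, contradicting the surjectivity of $F$ in top degree that you yourself invoke. What must be shown is that $\rho$ is an isomorphism, i.e. $\eta H^N(Y,\fId^{N+1}(\cL))=0$, i.e. $\ker F=0$. The paper gets $H^{r+d}(Y,\sKW_{r+d}(\cL))\simeq\eZ/2$ from Fasel's projective bundle theorems for $\fId^j$-cohomology, combined with Wendt's computation $H^d(X,\sKW_d(\cL'))\simeq\eZ/2$ on the Grassmannian. It then reads off the fibre square $\eZ/2=\eZ/2$.

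You are conflating $\fId^N$-cohomology with cohomology of the Witt sheaf. Your input on $\eP^r$ concerns $\bfW(\cO(j))$, and here $H^N(Y,\bfW(\cL))=0$ while $H^N(Y,\fId^N(\cL))\simeq\eZ/2$. Vanishing of the former does not give injectivity of $\rho$.

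Your fallback, proving $\ker F=0$ directly in $\Chowtilde[N]{Y,\cL}$, is a viable route different from the paper's, but it does not close as written. Granting that $\Chowtilde[N]{Y,\cL}$ is generated by $\GW(\fF)$ at one rational point $x_0$ (cellularity, localization, homotopy invariance), $\ker F$ is the image of all of $\fId(\fF)\cdot[x_0]$. So you need $\qqscal{a}[x_0]=0$ for every $a\in\fF^\times$.

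The single relation $\qqscal{-1}[x_0]=0$ is not enough. Over $\fF=\mathbb{Q}$, the ideal generated by $\qqscal{-1}$ maps onto the subgroup generated by $-1$ in $\fId/\fId^2\simeq\fF^\times/\fF^{\times 2}$, so it does not contain $\qqscal{2}$. Appealing to "$\Witt$ detected modulo $h$" does not help either: in $\Witt(\fF)$ one has $1-\qscal{-1}=2$, so you only get $2[x_0]=0$ there.

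The missing relation comes from the constant symbol, not from $[t]$. On $C\simeq\eP^1$, let $M=\omega_C\otimes i^\ast(\cL\otimes\omega_Y^\vee)$, which has odd degree. Choose a rational section $l$ of $M$ whose divisor has odd multiplicity only at $x_0$. Then $[a]\otimes l$ is unramified away from $x_0$. At $x_0$, writing $l=u\pi l_0$, its residue is $\qscal{\bar u}\,\eta[a]=\qscal{\bar u}(\qscal{a}-1)$. Hence $\fId(\fF)[x_0]=0$ already in $\Chowtilde[1]{C,M}$, and by pushforward also in $\Chowtilde[N]{Y,\cL}$.

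Finally, producing $C$ is not delicate, and the parity hypotheses do not enter there. Write $\cL\otimes\omega_Y^\vee\simquad\cO_Y(a)\otimes\pi^\ast\cO_X(b)$. If $a$ is odd, a fibre line works. If $a$ is even, any section of $Y$ over a Schubert line has degree $\equiv b$. In the paper, the hypotheses only select which of Fasel's theorems applies.
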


\begin{proof}
We begin by the first case: assume $r$ and $n$ are odd.

From the orientability of the Grassmannian given in Proposition \ref{PROP - orientabilite grassmann} and the determinant of a projective bundle given by formula \ref{EQU - omega fibre projectif}, we are in cases $\cL \simquad \cO_Y(1)\otimes p^\ast \cL'$ or $\cL \simquad p^\ast \cO_X$.

In the first case then by theorem 9.2 in \cite{Fasel_proj-bundle} we have $H^{r+d}(Y,\sKW_5(\cL)) \simeq \Chd[d]{X}$ which is $\eZ/2\eZ$. Indeed, the reduced cohomology is trivial and the injection is then the claimed isomorphism.

For the second case theorem 9.4 in \cite{Fasel_proj-bundle} where we look in codimension $r+d$, we get 
\[\tilde{H}^{r+d}(Y,\sKW_{r+d}(\cL)) \simeq H^d(X,\sKW_d) \simeq \eZ/2\eZ.\]
The last isomorphism is due to \cite[Remark 2.5]{Wendt_oriented-schubert}. The reduced cohomology is also the total cohomology in this case (just by definition).

In the end, in the non orientable case we have the fibre product (c.f. Proposition \ref{PROP - CW comme image H et Witt})
\[\xymatrix{\Chowtilde[r+d]{Y,\cL} \ar[rr] \ar[d]&& \Chowct[r+d]{Y} \simeq \eZ\cdot [pt] \ar[d] \\ 
\eZ/2\eZ \ar@{=}[rr] &&\eZ/2\eZ.}\]
Thus the desired isomorphism.

Now the second case.

The idea is the same, but the result needed to describe the cohomology is \cite[Theorem 9.1]{Fasel_proj-bundle}. From formula \ref{EQU - omega fibre projectif}, we get that $\omega_{Y/X} \simquad \cO_Y(1)\otimes p^\ast\cL'.$ 

Thus, if $\cL$ is quadratically equivalent to some $p^\ast \cL',$ the reduced cohomology $\tilde{H}^{r+d}(Y,\sKW_{r+d}(\cL))$ is isomorphic to $H^{r+d}(X,\sKW_{r+d}(\cL'))$ which is trivial. Hence, from the definition of the reduced cohomology, $H^{r+d}(Y,\sKW_{r+d}(\cL)) \simeq \Chd[d]{X}.$

Otherwise, $\cL \simquad \omega_{Y/X}\otimes p^\ast \cL'$ with $\cL'$ not quadratically equivalent to $\omega_{X/\fF}.$ Then, the second isomorphism of \cite[Theorem 9.1]{Fasel_proj-bundle} gives $\tilde{H}^{r+d}(Y,\sKW_{r+d}(\cL)) \simeq H^{d}(X,\sKW_{d}(\cL'))$ and from \cite[Remark 2.5]{Wendt_oriented-schubert}, this last one is $\eZ/2\eZ$. Moreover, the reduced cohomology is the whole cohomology as the cokernel of a trivial map.

The two possibilities merge into the same diagram as before and we get the fibre product
\[\xymatrix{\Chowtilde[r+d]{Y,\cL} \ar[rr] \ar[d]&& \Chowct[r+d]{Y} \simeq \eZ\cdot [pt] \ar[d] \\ 
\eZ/2\eZ \ar@{=}[rr] &&\eZ/2\eZ.}\]
\end{proof}

\begin{tho}[Non orientable Euler class]\label{THO - classe Euler non orientable}
Let $X$ be the $d-$dimensional Grassmannian $\Grass(k,n)$. Assume either
\begin{itemize}
    \item $n$ is odd and $Y = \eP(\cF)$ is a projective bundle of odd rank $r$ over $X$ or
    \item $Y = \eP(\cF)$ is a projective bundle of even rank $r$ over $X$.
\end{itemize}
Let $\cE$ be a coherent sheaf, locally free of rank $d+r$ over $Y$. If $\cE$ is not orientable over the base field $\fF$ then $e^{CW}(\cE)$ is uniquely determined by its value in the Chow group.

Furthermore, if $\deg c_{r+d}(\cE)$ is even, one can write 
\[e^{CW}(\cE) = \frac{\deg c_{r+d}(\cE)}{2}h \in \Chowtilde[r+d]{Y,\det \cE}\simeq \Chowct[r+d]{Y}.\]

We also have $e^W(\cE) = 0 \in H^{r+d}(Y,\sKW_{r+d}(\det \cE))$ in this case.
\end{tho}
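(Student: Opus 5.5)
The plan is to reduce everything to Proposition \ref{PROP - chowtilde non orientable est chow}, applied with $\cL = \det \cE$. By Definition \ref{DEF - orientabilite relative}, saying that $\cE$ (of rank $d+r = \dim Y$) is not orientable means exactly that $\det\cE$ is not quadratically equivalent to $\omega_{Y/\fF}$. The hypotheses on $n$ and $r$ in the theorem are those of Proposition \ref{PROP - chowtilde non orientable est chow}. In the even case we need $r\neq 0$; I would note that a projective bundle of rank $0$ is degenerate and is excluded implicitly.

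\textbf{Step 1: the Euler class is determined by its Chow image.} Proposition \ref{PROP - chowtilde non orientable est chow} shows that the forgetful map $F:\Chowtilde[r+d]{Y,\det\cE}\to\Chowct[r+d]{Y}\simeq\eZ\cdot[pt]$ is an isomorphism. So $e^{CW}(\cE)$ is the unique preimage of $F(e^{CW}(\cE))$. We then identify $F(e^{CW}(\cE))$ with $c_{r+d}(\cE)$. By the Remark following the representability proposition, Euler classes are compatible with change of coefficients along the morphism of spectra from Milnor--Witt to Milnor cohomology, which is induced by $F$. In the $\Gl$-oriented Milnor theory, the Euler class is the top Chern class (Proposition \ref{PROP - orientation theories usuelles} and the discussion after it). Hence $F(e^{CW}(\cE)) = c_{r+d}(\cE) = \deg c_{r+d}(\cE)\cdot[pt]$. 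This proves the first assertion.

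\textbf{Step 2: the formula when the degree is even.} Suppose $\deg c_{r+d}(\cE)=2m$. Proposition \ref{PROP - fonction hyperbolique bien def torsion} gives a well-defined map $H:\Chowct[r+d]{Y}\to\Chowtilde[r+d]{Y,\det\cE}$, independent of the twist, with $F\circ H = 2\,\Id$. Then
\[F\bigl(H(m[pt])\bigr) = 2m[pt] = F(e^{CW}(\cE)),\]
and injectivity of $F$ gives $e^{CW}(\cE) = H(m[pt]) = m h[pt]$, which is the claimed $\frac{\deg c_{r+d}(\cE)}{2}h$.

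\textbf{Step 3: the Witt class.} I would use the compatibility of Euler classes with the quotient $K^{MW}\to K^W = K^{MW}/(h)$: the class $e^W(\cE)$ is the image of $e^{CW}(\cE)$ under the vertical map $\bmod h$ of diagram (\ref{EQU - diagramme clef}). In the even case, $e^{CW}(\cE)$ is a multiple of $h$, so its image is $0$.

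For the parity-free reading, one can instead argue as follows. The fibre product of Proposition \ref{PROP - CW comme image H et Witt} has bottom row $H^{r+d}(Y,\sKW_{r+d}(\det\cE))\simeq\eZ/2\eZ\simeq\Chd[r+d]{Y}$, as computed in the proof of Proposition \ref{PROP - chowtilde non orientable est chow}. So the Witt class equals $\deg c_{r+d}(\cE)\bmod 2$, which vanishes exactly when the degree is even. I would state the vanishing under the evenness hypothesis, which is how I read ``in this case''.

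\textbf{Main obstacle.} The delicate point is Step 1's identification $F(e^{CW}(\cE))=c_{r+d}(\cE)$ in a twisted, non-orientable setting. One has to check that the forgetful map on Rost--Schmid complexes (Remark \ref{RMQ - complexe Rost-Schmid est Rost pour Milnor}) matches the morphism of representing spectra, and that the twist by $\det\cE$ disappears compatibly, as in Corollary \ref{COR - kth Milnor ne se tord pas}. I would cite \cite{DF_KMW-rpz} for the representability and naturality, and the standard fact that the Euler class in a $\Gl$-oriented theory is the top Chern class. Once this is in place, the remaining steps are formal consequences of $F$ being an isomorphism and of $F\circ H=2\,\Id$.
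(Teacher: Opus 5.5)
Your proposal is correct and follows the paper's proof. Proposition \ref{PROP - chowtilde non orientable est chow} makes the forgetful map $F$ an isomorphism onto $\Chowct[r+d]{Y}$, and in the even case the hyperbolic map $H$ with $F\circ H = 2\,\Id$ (Proposition \ref{PROP - fonction hyperbolique bien def torsion}) gives the preimage $\frac{\deg c_{r+d}(\cE)}{2}h$. You also handle correctly three details the paper leaves implicit: the identification $F(e^{CW}(\cE)) = c_{r+d}(\cE)$, reading the Witt-class vanishing as conditional on even degree, and the caveat $r\neq 0$.
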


\begin{rmq}
The main difference with Proposition \ref{PROP - chowtilde non orientable est chow} is the possibility to explicitly describe an element of even rank in the Chow-Witt group twisted by a non orientable invertible $\cO_Y-$module.
\end{rmq}

\begin{proof}
Using Proposition \ref{PROP - chowtilde non orientable est chow}, what is left to see is how to get the preimage under the isomorphism. Under the hypothesis of even degree, it is just an application of Corollary \ref{PROP - fonction hyperbolique bien def torsion}.
\end{proof}

\textbf{Warning:}\ An important difference between what was said until here and the next statements of this section is the orientability. Until here, results were mostly to have concrete descriptions only when $\cL \not \simquad \omega_X$. Some description can be given when we have orientability, but one has to be careful in which cohomology things happen and we have to use the quadratic degree.

\begin{cor}\label{COR - groupe Chow-Witt corps}
If $X = \Spec{\fF},$ we have obvious isomorphisms (using the previous notations and definitions introduced in this section \ref{SEC - calcul classe Euler}):
\begin{itemize}
\item $H^0(X,\sKW_0) \simeq \bfW^0(X) \simeq \Witt(\fF),$
\item $\Chowct[0]{X} \simeq \eZ,$
\item $\Chowtilde[0]{X} \simeq \GW(\fF)$.
\end{itemize}
In particular, the Grothendieck-Witt degree can be determined through the decomposition into the Witt degree and Chow degree. The difference is then a multiple of $h$.
\end{cor}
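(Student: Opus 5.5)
The statement to prove is the corollary for $X = \Spec{\fF}$, and I would simply specialise each definition of Section \ref{SEC - calcul classe Euler} to a point. Here $X^{(0)} = \{\mathrm{Spec}\,\fF\}$ and there are no points of higher codimension. So every Rost(-Schmid) complex is concentrated in degree $0$, with zero differentials, and its cohomology in degree $0$ is just the coefficient group evaluated at $\fF$. The normal sheaf of the generic point is zero, so $\nu_x$ is trivial, and with $\cL = \cO_X$ there is no twist at all.

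Taking $j=0$ in Definition \ref{DEF - groupes de Chow-Witt} gives $\Chowtilde[0]{X} = \KMW[0]{\fF}$, which is $\GW(\fF)$ by Lemma \ref{LEM - iso GW et KMW0}. Likewise the Rost complex gives $\Chowct[0]{X} = K^M_0(\fF) = \eZ$.

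For the Witt statements:
\begin{itemize}
\item By Definition \ref{DEF - K-th de Witt}, $H^0(X,\sKW_0) = K^W_0(\fF) = \KMW[0]{\fF}/(h) = \GW(\fF)/(h) = \Witt(\fF)$, using Definition \ref{DEF - anneau de Witt}. Equivalently, Theorem \ref{THO - isomorphisme Kth Witt et ideal fondamental} identifies it with $\fId^0(\fF) = \Witt(\fF)$.
\item For $\bfW^0(X) = \bfW_0(\fF) = \KMW[0]{\fF}[\eta^{-1}]$, I would use the standard fact that $\KMW[n]{\fF}[\eta^{-1}] \simeq \Witt(\fF)$. The argument is that $\eta h = 0$ by \ref{ITM - axm MW4}, so $h$ dies once $\eta$ is inverted. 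Multiplication by $\eta$ then identifies the degree $0$ part with the colimit of $\fId^{-n} = \Witt(\fF)$ under the inclusions, which are isomorphisms in non-positive degrees.
\end{itemize}
This second identification is the only step that is not a pure unwinding of definitions, and I would cite it, e.g.\ from Morel's work.

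For the final sentence, I would apply Proposition \ref{PROP - CW comme image H et Witt} with $n=0$; its hypothesis holds because $\eZ$ has no $2$-torsion. This gives $\GW(\fF) \simeq \Witt(\fF)\times_{\eZ/2}\eZ$, with the maps being rank mod $2$ and the quotient map. So an element of $\GW(\fF)$ is determined by its Witt class and its rank.

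To see that the difference is a multiple of $h$, take $q, q' \in \GW(\fF)$ with the same Witt class. Then $q - q' \in (h)$, so $q - q' = h\cdot s$ with $s \in \GW(\fF)$. Since $h\qscal{a} = h$, we have $hs = \mathrm{rk}(s)\,h$, so the difference is an integer multiple of $h$, and that integer is fixed by comparing ranks via $\mathrm{rk}(h)=2$.
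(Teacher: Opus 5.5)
Your proposal is correct and follows the paper's own proof. As there, the isomorphisms come from unwinding the Rost--Schmid complexes at a point to the degree-$0$ $K$-theory groups, and the final claim comes from Proposition \ref{PROP - CW comme image H et Witt} with $X=\Spec{\fF}$ and $\cL=\cO_X$. You additionally make explicit two details the paper leaves implicit: the identification $\KMW[0]{\fF}[\eta^{-1}]\simeq\Witt(\fF)$ (which you rightly cite rather than derive from the paper's definitions) and the computation $hs=\mathrm{rk}(s)\,h$.
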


\begin{proof}
The last statement is an application of the previous Proposition \ref{PROP - CW comme image H et Witt} where we take $X = \Spec{\fF}$ and $\cL = \cO_X$. All isomorphisms are just the relations between the $K-$theory groups in degree $0$.
\end{proof}

\begin{rmq}
The first isomorphism in Corollary \ref{COR - groupe Chow-Witt corps} is the main reason why it is enough to compute Euler classes with Witt coefficients when we take the degree in the end.
\end{rmq}

\begin{rmq}
One of the reasons why we end up only computing the degree is that the fibre product
\[\xymatrix{\Chowtilde[n]{X,\omega_X} \ar[r] \ar[d] & \Chowct[n]{X} \ar[d]^{\mod 2}\\
 H^n(X,\fId^{n}(\omega_X))\ar[r]_\rho & \Chd[n]{X}}\]
 and especially its bottom part is quite unknown since $H^n(X,\fId^{n}(\omega_X))$ has no explicit and usable description in general. In particular, the commutative diagram of multiplication by $h$ (c.f. Proposition \ref{PROP - fonction hyperbolique bien def torsion}) is not enough if the Chern degree is even because the top map has no reason to be injective a priori.
\end{rmq}

In general, even the Witt degree is hard to compute, but if the sheaf is of odd rank, we don't need computations:

\begin{lemme}[Lemma 4.3, \cite{Levine_euler-enumerative}]\label{LEM - annulation classe Witt rang impair}
Let $\cE$ be a locally free sheaf of odd rank $r$ on $Y$ a smooth $\fF-$scheme. Then,
\[\eta e^{CW}(\cE) = 0\]
in $H^r(Y,\underline{K}^{MW}_{r-1}(\det \cE)).$
Moreover, its specialisation in $\bfW^5(Y,\det \cE)$ is also $0$. 
\end{lemme}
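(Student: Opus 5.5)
The idea is to exploit the scalar automorphisms of $\cE$. Since $\cE$ has odd rank $r$, scalars act on $\det\cE$ through an odd power, and this forces the Euler class to be killed by $\eta$. We cannot use $-\Id$ alone: it only gives $e^{CW}(\cE)=\qscal{-1}e^{CW}(\cE)$, and since $\eta h=0$ implies $\eta\qscal{-1}=-\eta$, this yields only $2\eta e^{CW}(\cE)=0$. So we work with a "universal scalar" over $Y\times\mathbb{G}_m$ instead.

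\textbf{Step 1 (naturality under automorphisms).} Let $\varphi$ be an automorphism of $\cE$. Then $\varphi$ preserves the zero section, so by Definition \ref{DEF - classe Euler univ stable} it fixes $e^u(\cE)$ up to the induced automorphism of $\Thom[Y]{\cE}$. In Milnor--Witt cohomology, with the $\SL^c$-orientation of Proposition \ref{PROP - orientation theories usuelles}, this automorphism acts on the twisted group $H^r(-,\underline{K}^{MW}_r(\det\cE))$ by multiplication by $\qscal{\det\varphi}$. Hence
\[\qscal{\det\varphi}\, e^{CW}(\cE)=e^{CW}(\cE).\]
This is the step I expect to need the most care. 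One has to check that the identification of the action of $\varphi$ on the Thom class with $\qscal{\det \varphi}$ is the one coming from Definition \ref{DEF - kth MW tordue}, i.e.\ that the twist is by $\det\cE$ and not its dual. Getting this wrong does not change the argument, only a sign convention.

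\textbf{Step 2 (universal scalar).} Let $p: Y\times\mathbb{G}_m\to Y$ be the projection, where $t$ is the coordinate on $\mathbb{G}_m=\Spec{\fF[t,t^{-1}]}$. Apply Step 1 to $p^\ast\cE$ with $\varphi=t\cdot\Id$. Then $\det\varphi=t^r$ with $r$ odd, so $\qscal{t^r}=\qscal{t}$. Using $\qscal{t}=1+\eta[t]$ (Lemma \ref{LEM - iso GW et KMW0}) and the naturality $p^\ast e^{CW}(\cE)=e^{CW}(p^\ast\cE)$, we obtain
\[\eta[t]\cdot p^\ast e^{CW}(\cE)=0\quad\text{in } H^r\big(Y\times\mathbb{G}_m,\underline{K}^{MW}_{r}(\det p^\ast\cE)\big).\]

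\textbf{Step 3 (residue).} Use the localization sequence for $Y\times\{0\}\subset Y\times\mathbb{A}^1$ on Rost--Schmid complexes. Its boundary map $\partial_t$ satisfies $\partial_t([t]\cdot p^\ast\alpha)=\alpha$ for classes $\alpha$ pulled back from $Y$; this is the residue of $[t]$ at $t=0$, using that $p^\ast$ factors through $Y\times\mathbb{A}^1$ and that $\partial_t$ is linear over $\eta$. Applying $\partial_t$ to the identity of Step 2 with $\alpha=\eta\, e^{CW}(\cE)$ gives
\[\eta\, e^{CW}(\cE)=0\quad\text{in } H^r(Y,\underline{K}^{MW}_{r-1}(\det\cE)).\]
The twist by the trivial normal bundle of $Y\times\{0\}$ is harmless.

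\textbf{Step 4 (Witt specialisation).} The class in $\bfW^r(Y,\det\cE)$ is the image of $e^{CW}(\cE)$ under the map induced by $K^{MW}_\ast\to K^{MW}_\ast[\eta^{-1}]$ (Definition \ref{DEF - coeff Witt twistes}). On this target $\eta$ is invertible, so $e^W(\cE)=\eta^{-1}\cdot\eta\, e^W(\cE)=0$.
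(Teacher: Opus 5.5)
The paper gives no proof of this lemma; it imports it from Levine's Lemma 4.3, so there is no internal argument to compare yours against. Your proof is correct and complete, and it follows the standard route for this vanishing:
\begin{itemize}
\item naturality of $e^{CW}$ under the automorphism $t\cdot\Id$ of $p^\ast\cE$ on $Y\times\mathbb{G}_m$ gives $\eta[t]\,p^\ast e^{CW}(\cE)=0$;
\item the residue at $t=0$ then recovers $\eta\, e^{CW}(\cE)$. Equivalently, you are using the splitting $H^\ast(Y\times\mathbb{G}_m,\underline{K}^{MW}_\ast(L))\cong H^\ast(Y,\underline{K}^{MW}_\ast(L))\oplus[t]\cdot H^\ast(Y,\underline{K}^{MW}_{\ast-1}(L))$.
\end{itemize}
Your observation that $-\Id$ alone only yields $2\eta\, e^{CW}(\cE)=0$ is exactly why the universal scalar over $\mathbb{G}_m$ is needed.

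Two harmless remarks:
\begin{itemize}
\item The identity $\partial_t([t]\cdot p^\ast\alpha)=\alpha$ holds only up to a unit ($\epsilon=-\qscal{-1}$ or a sign). Which unit depends on whether $[t]$ is written on the left or right and on the residue convention. This does not affect the conclusion $\eta\, e^{CW}(\cE)=0$.
\item Your worry in Step 1 about twisting by $\det\cE$ versus its dual is moot, since $\qscal{u}=\qscal{u^{-1}}$ in $K^{MW}_0$.
\end{itemize}
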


\begin{prop}[Orientable Euler number of odd rank]\label{PROP - classe Euler orientable rang impair}
Assume $\cE$ is an locally free orientable sheaf of odd rank $r$ over a smooth and proper $\fF-$scheme $X$ of dimension $r$. Then 
\[\deg^{CW} e^{CW}(\cE) = \frac{\deg c_r(\cE)}{2}h\in \GW(\fF)\]
\end{prop}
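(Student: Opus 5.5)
We push the Euler class forward to the point and then split the resulting quadratic form into its rank and its Witt class, using Corollary \ref{COR - groupe Chow-Witt corps}. Since $\cE$ is relatively orientable, there is an invertible $\cL$ with $\det \cE \simeq \omega_X\otimes \cL^{\otimes 2}$. This gives an identification $\Chowtilde[r]{X,\det\cE}\simeq \Chowtilde[r]{X,\omega_X}\simeq \Chowtildecov[0]{X}$, and the degree $\deg^{CW}=\pi_\ast$ of Definition \ref{DEF - degre quadratique} takes values in $\Chowtildecov[0]{\fF}\simeq\GW(\fF)$. The push-forward is compatible with the change of coefficients $K^{MW}\to K^M$ (quotient by $\eta$) and $K^{MW}\to K^W$ (quotient by $h$), because all these maps come from morphisms of the representing spectra. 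So we only need to compute the two images of $\deg^{CW}e^{CW}(\cE)$.

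\emph{Rank part.} The forgetful map $F$ sends $e^{CW}(\cE)$ to the Chow Euler class $c_r(\cE)$, since the Milnor theory is $\Gl$-oriented (Proposition \ref{PROP - orientation theories usuelles}). Hence the rank of $\deg^{CW}e^{CW}(\cE)$ is $\deg c_r(\cE)$.

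\emph{Witt part.} Lemma \ref{LEM - annulation classe Witt rang impair} gives $\eta\, e^{CW}(\cE)=0$ because $r$ is odd. This says exactly that the image of $e^{CW}(\cE)$ in the $\eta$-inverted theory $\bfW^r(X,\det\cE)$ vanishes. Pushing forward, the Witt class of $\deg^{CW}e^{CW}(\cE)$ in $\bfW^0(\fF)\simeq\Witt(\fF)$ is $0$. Here we use the first isomorphism of Corollary \ref{COR - groupe Chow-Witt corps}: in degree $0$ the Witt coefficients and the $K^W$ coefficients agree, so there is no ambiguity between the two Witt-type theories at the point.

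\emph{Conclusion.} Write $q=\deg^{CW}e^{CW}(\cE)\in\GW(\fF)$. Its image in $\Witt(\fF)=\GW(\fF)/(h)$ is $0$, so $q=m h$ for some integer $m$. Taking ranks, $\operatorname{rk}(q)=2m=\deg c_r(\cE)$. This shows at the same time that $\deg c_r(\cE)$ is even and that $m=\deg c_r(\cE)/2$. Equivalently, $\Chowtilde[0]{\fF}$ is the fibre product $\Witt(\fF)\times_{\eZ/2}\eZ$ of Proposition \ref{PROP - CW comme image H et Witt}, and the element with coordinates $(0,\deg c_r(\cE))$ is $\frac{\deg c_r(\cE)}{2}h$.

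\emph{Main obstacle.} The delicate step is making the Witt part rigorous. We must check that the vanishing in $\bfW^r(X,\det\cE)$ (with $\eta$ inverted) transports through the orientation isomorphism $\det\cE\simeq\omega_X\otimes\cL^{\otimes2}$ and the proper push-forward of Proposition \ref{PROP - covariance propre} to the vanishing of the Witt component in $\GW(\fF)\to\Witt(\fF)$. My plan is to use naturality of $\pi_\ast$ with respect to the morphism of spectra $K^{MW}\to K^{MW}[\eta^{-1}]$. At the point this map is $\GW(\fF)\to\Witt(\fF)$, since $\KMW[0]{\fF}[\eta^{-1}]\simeq\Witt(\fF)$. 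This is essentially the argument of \cite[Proposition 4.4]{Levine_euler-enumerative}, which we may cite directly if that naturality is taken as known.
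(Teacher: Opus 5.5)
Your proposal is correct and is essentially the paper's proof. You push $e^{CW}(\cE)$ forward to $\Chowtilde[0]{\fF}\simeq\GW(\fF)$, observe that its Witt component vanishes by Lemma \ref{LEM - annulation classe Witt rang impair} and that its rank is $\deg c_r(\cE)$, and then recover the element from the fibre product of Proposition \ref{PROP - CW comme image H et Witt}; your use of $\ker(\GW(\fF)\to\Witt(\fF))=\eZ h$ just spells out that last step, with the small bonus of showing that $\deg c_r(\cE)$ must be even.
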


\begin{proof}
If we note $f: X \to \Spec{\fF}$ the structural morphism, then we look at the value of $f_\ast e^{CW}(\cE)$ in $\Chowtildecov[0]{\Spec\fF} = \Chowtilde[0]{\Spec\fF}.$ Then, the value is determined by the value in $H^0(X,\sKW_0) \simeq \bfW^0(X)$ and in $\Chowct[0]{\Spec\fF}$. But the first one is 0 by Lemma \ref{LEM - annulation classe Witt rang impair} and by the fibre product property \ref{PROP - CW comme image H et Witt}, we have the rest.
\end{proof}

\begin{rmq}\label{RMQ - donnee Witt est la seule importante}
In the end, when computing Euler classes (or degree), the "quadratic information" is solely contained in the cohomology with coefficients in $K^W$ (or the Witt group when orientable). The other part is then just a translation of the "classical" result over an algebraically closed field back in the Chow-Witt group (or Grothendieck-Witt group).
\end{rmq}

\begin{rmq}
A subsidiary question is to what extent this Witt invariant keeps track of the geometry and is not a Witt invariant of a finite length étale algebra over the base field. Serre described these invariants in \cite{Serre_Witt-inv}. More recently, in \cite{BRW25} a link was found for some Welschinger--Witt invariants.
\end{rmq}

\section{Lines in a degree 2 del Pezzo}

\subsection{Del Pezzo surfaces of degree 2}\label{SSEC - surfaces de degre 2}

We will imitate the strategy at the beginning of \cite{Tih_1980} but adapted to a surface.

\begin{prop}\label{PROP - surface deg 2 dans fibre projectif}
A smooth del Pezzo surface $S$ of degree $2$ is a double cover of $\eP^2_\fF$ ramified over a smooth quartic. This can be extended to an embedding of $S$ in the projective bundle $P = \eP(\cO_{\eP^2} \oplus \cO_{\eP^2}(2))$ as an hypersurface defined by the zero locus of a section of $\cO_P(2)$. This gives the commutative diagram 
\[\xymatrix{S\ \ar@{^{(}->}[r] \ar[rd]_w & P \ar[d]^{w_0}\\ & \eP^2 }\]
where $w$ is the anti-canonical morphism.
\end{prop}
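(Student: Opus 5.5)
The argument has two parts. First I identify the degree 2 del Pezzo surface as a double cover of $\eP^2$ through its anticanonical map. Then I realise that double cover as a divisor in the weighted-projective-type bundle $P$.

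\textbf{Step 1: the double cover.} Let $S$ be smooth del Pezzo of degree $2$, so $-K_S$ is ample with $K_S^2=2$. By Riemann--Roch together with Kodaira vanishing (valid for del Pezzo surfaces in any characteristic, since $H^1(S,\cO_S)=0$ and $-K_S$ is ample with $h^0(-K_S)=1+K_S^2$), we get $h^0(S,\omega_S^\vee)=3$. The standard facts in \cite[Section 8.7]{Dolgachev} then apply:
\begin{itemize}
\item $|-K_S|$ is base point free;
\item the induced morphism $w: S\to \eP^2$ is finite of degree $(-K_S)^2=2$.
\end{itemize}
Since the characteristic is different from $2$, the finite flat morphism $w$ gives $w_\ast\cO_S\simeq \cO_{\eP^2}\oplus \cM$ with $\cM$ invertible. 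The algebra structure is then $\cM^{\otimes 2}\to \cO_{\eP^2}$, whose vanishing locus is the branch curve $B$.

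\textbf{Step 2: identifying $\cM$ and $B$.} Hurwitz gives $K_S=w^\ast(K_{\eP^2})-\tfrac12 w^\ast B$. Comparing with $-K_S=w^\ast\cO(1)$ forces $\cO(-1)=\cO(-3)+\tfrac12\deg B$, hence $\deg B=4$ and $\cM\simeq\cO_{\eP^2}(-2)$. Smoothness of $S$ forces $B$ to be smooth, because a double cover branched along a singular curve is singular over the singular points. All of these are $\fF$-rational constructions, so they descend to the non-closed field.

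\textbf{Step 3: the embedding.} Locally, $S=\operatorname{Spec}\,\cO[t]/(t^2-f)$, where $t$ is a section of $\cM^\vee=\cO(2)$ and $f\in H^0(\eP^2,\cO(4))$ is the equation of $B$. Globally, the surjection $\Symalg{\cO\oplus\cO(2)}\to w_\ast\cO_S\otimes\bigoplus_m\cO(m)$ (sending the two generators to $1$ and $t$) yields a closed immersion $S\hookrightarrow P=\eP(\cO_{\eP^2}\oplus\cO_{\eP^2}(2))$ over $\eP^2$. Under EGA's convention the two generators of $\cO_P(1)$ correspond to the summands, so we write them as $s_0\in H^0(\cO_P(1))$ and $s_1\in H^0(\cO_P(1)\otimes w_0^\ast\cO(-2))$. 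The image is then cut out by $s_1^2 - w_0^\ast f\, s_0^2$, which is a section of $\cO_P(2)\otimes w_0^\ast\cO(-4)\otimes w_0^\ast\cO(4)=\cO_P(2)$, up to the twist convention. It is fibrewise a quadric in $\eP^1$ with two points over $\eP^2\setminus B$ and one double point over $B$, so the zero scheme is exactly $S$ and the diagram commutes with $w=w_0\circ\iota$.

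\textbf{Main obstacle.} I expect the hardest step to be keeping the twists consistent with EGA's convention, so that the equation really lands in $\cO_P(2)$ with no extra pulled-back line bundle. It may be necessary to tensor $\cO_P(1)$ by $w_0^\ast\cO(\pm 2)$, which is harmless since this does not change $P$. I would verify the twist first by computing $\omega_P$ via (\ref{EQU - omega fibre projectif}), $\omega_P\simeq\cO_P(-2)\otimes w_0^\ast(\cO(2)\otimes\cO(-3))$, and then checking by adjunction that $\omega_S=(\omega_P\otimes\cO_P(2))|_S=w^\ast\cO(-1)$, as required.
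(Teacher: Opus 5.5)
Your argument is correct in substance. The decisive step, identifying the twist, is done differently from the paper.

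The paper takes the double cover from \cite{Dolgachev} and builds $j\colon S\to P$ from a surjection $w^\ast(\cO_{\eP^2}\oplus\cO_{\eP^2}(2))\to(\omega_S^\vee)^{\otimes 2}$. It never writes an equation. Since $j(S)$ is a $2$-section of $w_0$, it gets $\cO_P(S)\simeq\cO_P(2)\otimes w_0^\ast\cO_{\eP^2}(d)$. It then computes $\omega_P\simeq\cO_P(-2)\otimes w_0^\ast\cO_{\eP^2}(-1)$ from (\ref{EQU - omega fibre projectif}) and uses adjunction with $\omega_S\simeq w^\ast\cO_{\eP^2}(-1)$ to force $d=0$.

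You instead first determine $w_\ast\cO_S\simeq\cO_{\eP^2}\oplus\cO_{\eP^2}(-2)$, with multiplication given by the quartic $f$, via trace splitting and Hurwitz. You then embed by the degreewise surjection $\Symalg[m]{\cO_{\eP^2}\oplus\cO_{\eP^2}(2)}\to w_\ast w^\ast\cO_{\eP^2}(2m)$ and write the defining section explicitly. For you, the paper's adjunction computation is only a consistency check.

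The paper's route is shorter and avoids explicit twist bookkeeping. However, it relies on $\Picard{P}$ and on $j$ being a closed immersion. Your route checks the embedding with the right criterion, namely surjectivity after pushing forward to $\eP^2$. Your map also sends the $\cO_{\eP^2}$-summand to the extra section $t$ of $w^\ast\cO_{\eP^2}(2)$; the bare projection onto the second summand would only give $w$ followed by a section of $w_0$. Finally, your route produces the actual section and ties it to the branch quartic.

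Three slips need fixing.

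First, the equation has its indices swapped. With your labelling, $s_0$ comes from the $\cO_{\eP^2}$-summand and restricts to $t$ on $S$, while $s_1$ comes from the $\cO_{\eP^2}(2)$-summand and restricts to $1$. The relation $t^2=f$ therefore reads $s_0^2-w_0^\ast f\,s_1^2\in H^0(P,\cO_P(2))$. As you wrote it, $s_1^2$ lies in $\cO_P(2)\otimes w_0^\ast\cO_{\eP^2}(-4)$ and $w_0^\ast f\,s_0^2$ lies in $\cO_P(2)\otimes w_0^\ast\cO_{\eP^2}(4)$, so the difference is meaningless. Your twist count is the one for the corrected equation, and no retwisting of $\cO_P(1)$ is needed. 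Relatedly, the target of your graded surjection should be $\bigoplus_m w_\ast\cO_S\otimes\cO_{\eP^2}(2m)$, not $w_\ast\cO_S\otimes\bigoplus_m\cO(m)$.

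Second, Hurwitz reads $K_S=w^\ast K_{\eP^2}+\tfrac12 w^\ast B$. Your next line already uses the plus sign.

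Third, make the fibrewise count scheme-theoretic. The zero scheme $Z$ misses $\{s_1=0\}$, since there it would force $s_0=s_1=0$. On $\{s_1\neq 0\}$, which is the total space of the line bundle with fibre coordinate $u=s_0/s_1$, $Z$ is $\{u^2=f\}$, i.e.\ the relative $\Spec{\cO_{\eP^2}\oplus\cO_{\eP^2}(-2)}$, which is exactly $j(S)$. Alternatively, note that $j(S)\subseteq Z$ and both are finite flat of degree $2$ over $\eP^2$.
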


\begin{proof}
The first part comes from the definition of a smooth del Pezzo surface of degree two and its ample anti-canonical sheaf. A characterisation of the degree is $\dim H^0(S,\omega_S^\vee) - 1 = \deg S$ and since $\omega_S^\vee$ is spanned by its global sections the evaluation $H^0(S,\omega_S^\vee)\otimes \cO_S \to \omega_S^\vee$ is surjective. Thus the projection $w$ to $\eP^2 = \eP(H^0(S,\omega_S^\vee))$ defines the double cover along a smooth quartic (c.f. \cite[Section 8.7]{Dolgachev}). In particular, $w^\ast \cO_{\eP^2}(1) = \omega_S^\vee.$

Let us consider the sheaf $\cE = \cO_{\eP^2}\oplus \cO_{\eP^2}(2).$ Then $w^\ast\cE = \cO_S \oplus (\omega_S^\vee)^{\otimes 2}$ and the second projection $p_2 : w^\ast\cE \to (\omega_S^\vee)^{\otimes 2}$ determines a $\eP^2-$morphism $j : S \to \eP(\cE).$ This morphism $j$ is a closed immersion since the induced morphisms $\Symalg[r]{w^\ast \cE} \to (\omega_S^\vee)^{\otimes 2r}$ are surjective. One can also remark that $\eP(\cE) \to \eP(1,1,1,2)$ is the blow-up of the singular point in $\eP(1,1,1,2)$ and that $j(S)$ is disjoint from the exceptional divisor corresponding to the section we get from the first projection $\cO_{\eP^2}\oplus \cO_{\eP^2}(2) \to \cO_{\eP^2}.$

What is left to see is that $\cO_{P}(S) \simeq \cO_P(2).$ But we know that $w$ is a double cover the the closed immersion $j$ gives $j(S)$ as a 2-section of $w_0 : P \to \eP^2$. Hence $\cO_P(S) \simeq \cO_P(2)\otimes w_0^\ast \cO_{\eP^2}(d)$ for some $d.$ From the isomorphism \ref{EQU - omega fibre projectif}, we get
\[\omega_{P} \simeq \cO_{P}(-2) \otimes w_0^\ast \cO_{\eP^2}(2) \otimes w_0^\ast \cO_{\eP^2}(-3) \simeq \cO_P(-2)\otimes w_0^\ast \cO_{\eP^2}(-1). \]
By adjunction formula 
\[\omega_S \simeq \rest{\omega_{\eP(\cE)}}{S} \otimes \det \cC_{S/\eP(\cE)}^\vee,\]
where $\cC_{S/\eP(\cE)}^\vee \simeq \rest{\cO_{\eP(\cE)}(S)}{S}$ is the normal sheaf to $S$ in $\eP(\cE),$ we then get 
\begin{align*}
     w^\ast \cO_{\eP^2}(-1) = \omega_S & \simeq j^\ast\left(\cO_P(-2)\otimes w_0^\ast \cO_{\eP^2}(-1)\otimes \cO_P(2) \otimes w_0^\ast \cO_{\eP^2}(d)\right)\\ 
     & \simeq j^\ast w_0^\ast \cO_{\eP^2}(d-1).
\end{align*}
Thus $d$ is zero and the claim.
\end{proof}

\subsection{Lines in a projective bundle}

The lines in $P$ we are looking for are generalisation of lines in $S$. So we need to first define what we want as lines of $S.$

\begin{deff}[Hilbert scheme of lines]\label{DEF - schema Hilbert droites}
The Hilbert scheme of lines of a del Pezzo surface $X,$ with respect to the ample sheaf $\omega_X^\vee,$ is the scheme
\[\Hdrtes X = \Hilbsch[1+t]{X}\]
representing the functor $h_{1+t}^{\omega_X^\vee}(X)$ of coherent sheaves on $X$ such that their Hilbert polynomial (c.f. \cite[Section 5.1.4]{FGA_explained}), with respect to $\omega_X^\vee,$ is $1+t.$
\end{deff}

For $S$ a del Pezzo surface, what we take for a line is then a 1-dimensional subvariety such that its ideal sheaf has Hilbert polynomial $1+t$ with respect to the anti-canonical sheaf $\omega_S^\vee.$ We have to make the choice of the ample sheaf $\cL$ of the definition and the polynomial to match this and to be coherent with what we want for lines in $P$. Explicitly, we want an ample sheaf $\cL$ on $P$ and a Hilbert polynomial $\phi$ such that $\Hilbsch[j^\ast\cL,\phi]{S}\hookrightarrow \Hilbsch[\cL,\phi]{P}$ and $\Hilbsch[j^\ast\cL,\phi]{S}$ isomorphic to $\Hilbsch[\omega_S^\vee,1+t]{S}.$

\begin{lemme}\label{LEM - droites P comme zero sections}
If $\fF = \bar{\fF}$ is algebraically closed, the lines of $S$ are $(-1)-$curves that project to lines bi-tangent to the quartic in $\eP^2.$ With respect to the embedding in $P$ they are described as zero loci 
\begin{align}\label{EQU - droites de P}
    Z_P(s),\quad s \in H^0(P,\cO_P(1)\oplus w_0^\ast \cO_{\eP^2}(1)).
\end{align}
\end{lemme}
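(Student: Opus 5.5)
We use the description of $S$ from Proposition \ref{PROP - surface deg 2 dans fibre projectif} as a double cover $w: S\to\eP^2$ branched along a smooth quartic $C$, together with the embedding $j: S\hookrightarrow P=\eP(\cO_{\eP^2}\oplus\cO_{\eP^2}(2))$ as the zero locus of a section of $\cO_P(2)$. The argument has a numerical part, which identifies lines of $S$ with $(-1)$-curves lying over bitangents, and a geometric part, which realises such a curve as the zero locus of a section of $\cO_P(1)\oplus w_0^\ast\cO_{\eP^2}(1)$.

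\emph{Numerical part.} Let $L\subset S$ be an integral curve with Hilbert polynomial $1+t$ with respect to $\omega_S^\vee$. Then $L\cdot(-K_S)=1$ and $\chi(\cO_L)=1$, so $L\simeq\eP^1$ has arithmetic genus $0$. Adjunction gives $L^2+K_S\cdot L=-2$, hence $L^2=-1$ and $L$ is a $(-1)$-curve. Since $w^\ast\cO_{\eP^2}(1)=\omega_S^\vee$, the projection formula gives $\deg(w|_L)\cdot\deg w(L)=1$. Therefore $w|_L$ is an isomorphism onto a line $\ell\subset\eP^2$.

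We then show that $\ell$ is a bitangent to $C$. The preimage $w^{-1}(\ell)$ is a member of $|-K_S|$ containing $L$, so $w^{-1}(\ell)=L+L'$ with $L'$ the image of $L$ under the covering involution. Since $L'$ is also a $(-1)$-curve, $L\cdot L'=\tfrac12\big((L+L')^2-L^2-L'^2\big)=\tfrac12(2+2)=2$. The intersection $L\cap L'$ is exactly the ramification locus over $\ell$, namely $w^{-1}(\ell\cap C)$. Consequently $\ell$ meets $C$ in two points, each of even multiplicity (counted as $2+2$ or $4$). Conversely, over a bitangent $\ell$ the double cover of $\ell\simeq\eP^1$ branched along $2D$, with $D$ of degree $2$, splits into two sections. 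So lines of $S$ correspond to pairs of components over bitangents.

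\emph{Geometric part.} Because $w|_L:L\to\ell$ is an isomorphism, $L$ is the image of a section of $w_0$ over $\ell$, and $L\subset w_0^{-1}(\ell)=\eP(\cO_\ell\oplus\cO_\ell(2))$. The surface $w_0^{-1}(\ell)$ is cut out by a section of $w_0^\ast\cO_{\eP^2}(1)$, namely the linear form defining $\ell$. Inside this Hirzebruch surface $\mathbb F_2$, a section of $w_0|_{w_0^{-1}(\ell)}$ corresponds to a quotient $\cO_\ell\oplus\cO_\ell(2)\to\cO_\ell(a)$. Disjointness of $j(S)$ from the exceptional section excludes $a=0$, so $a=2$. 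Such a quotient is given by a pair $(g,1)$ with $g\in H^0(\ell,\cO_\ell(2))$.

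The section in question is then the zero locus of the corresponding element of $H^0(\cO_P(1))=H^0(\eP^2,\cO\oplus\cO(2))$ restricted to $w_0^{-1}(\ell)$. Here $g$ is lifted to a quadric on $\eP^2$, which is possible because $H^0(\eP^2,\cO(2))\to H^0(\ell,\cO_\ell(2))$ is surjective. Combining the two equations yields $s\in H^0(P,\cO_P(1)\oplus w_0^\ast\cO_{\eP^2}(1))$ with $L=Z_P(s)$. The zero scheme is reduced because the two divisors meet transversally: the restricted section of $\cO_P(1)$ cuts a single section curve with multiplicity one.

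\emph{Main obstacle.} The hardest step is making the relative $\cO_P(1)$ precise. We must check that, under the paper's EGA convention, a quotient $\cO\oplus\cO(2)\to\cO(2)$ corresponds to the vanishing of a global section of $\cO_P(1)$ and not of a twist $\cO_P(1)\otimes w_0^\ast\cO(k)$. The first thing to try is to write $P$ locally as $\mathrm{Proj}$ of $\cO[x,y]$, with $x$ of weight $0$ and $y$ of weight $2$. In these coordinates $S$ is given by $y^2=f_4(x)$ (after the appropriate identification), and a line is given by $y=g_2$ together with $\ell=0$. This reduces the claim to the local description of $\cO_P(1)$ and also recovers the bitangency condition $f_4|_\ell=g_2^2$ directly.
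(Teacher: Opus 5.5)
\paragraph{Comparison with the paper.}
Your proof is correct in substance and follows the same skeleton as the paper's. You show that $w(L)$ is a bitangent line $\ell$, and then cut $j(L)$ out of $w_0^{-1}(\ell)$ by a section of $\cO_P(1)$. You actually prove more than the paper does:
\begin{itemize}
\item the paper never checks that lines are $(-1)$-curves;
\item it only asserts that $j(L)$ is the zero locus of a section of $\cO_P(1)$ restricted to $w_0^{-1}(\ell)$, which your quotient analysis on $\eP(\cO_\ell\oplus\cO_\ell(2))$ proves;
\item it does not lift that section from $w_0^{-1}(\ell)$ to $P$, which you do.
\end{itemize}
The bitangency step is argued differently. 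The paper argues by contradiction: if $\ell$ were not bitangent, $w^{-1}(\ell)$ (the double cover $y^2=f_4|_\ell$) would be irreducible of anticanonical degree $2$, so it could not contain a line. In the other direction, the paper's proof has a part you omit. For $\cL=\cO_P(1)\otimes w_0^\ast\cO_{\eP^2}(1)$ one has $j^\ast\cL=(\omega_S^\vee)^{\otimes 3}$, and both $L\subset S$ and $j(L)\subset P$ have Hilbert polynomial $1+3t$. This yields the closed immersion of Hilbert schemes used afterwards. It is not in the literal statement, but it is the reason the lemma is there.

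\paragraph{Steps to repair.}
Two of your steps are compressed; both conclusions are true, but the reasoning as written needs fixing.

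First, your ``consequently'' does not follow from $L\cdot L'=2$ and $L\cap L'=w^{-1}(\ell\cap C)$ alone. These give at most two points in $\ell\cap C$, but they do not exclude contact orders $(1,3)$. Evenness needs the splitting criterion you only use for the converse: $w^{-1}(\ell)$ is reducible if and only if $f_4|_\ell$ is a square. Locally, $y^2=u^m\cdot(\text{unit})$ has two branches through a ramification point only when $m$ is even.

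Second, disjointness from $E_0$ does more than exclude $a=0$, and you need that extra strength. The $\cO_\ell(2)$-component of the quotient is a section of $\cO_\ell(a-2)$ vanishing exactly where your section meets $E_0$. So disjointness forces $a=2$, which is what rules out $a\geq 3$. Alternatively, $a=\deg\left(j^\ast\cO_P(1)|_L\right)=\deg\left((\omega_S^\vee)^{\otimes 2}|_L\right)=2$.

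With $a=2$, your ``main obstacle'' resolves immediately. The kernel of $(g,1)$ is $\cO_\ell\cdot(1,-g)$. More generally, a section with quotient $\cO_\ell(a)$ is cut out in $w_0^{-1}(\ell)$ by a section of $\cO_P(1)\otimes w_0^\ast\cO_\ell(a-2)$, so for $a=2$ there is no twist.
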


\begin{proof}
As explained before, we have to choose some $\cL$ over $P.$ If we take $\cL = \cO_P(1)\otimes w_0^\ast\cO_{\eP^2}(1),$ then $j^\ast \cL = (\omega_S^\vee)^{\otimes 3}$ by construction. Thus a line in $S$ is a curve $l$ such that $\deg \rest{j^\ast\cL}{l} = 3.$ The Hilbert polynomial, with respect to such sheaf, is
\[P_l(t) = \sum_i (-1)^i \dim_\fF (H^i(l,(\rest{j^\ast\cL}{l})^{\otimes t})) = \dim H^0(\eP^1,\cO_{\eP^1}(3t)) = 1+3t.\]

On the other hand, the image of a line $l$ in $\eP^2$ is $w(l) = l_0$ tangent to the ramification quartic in $\eP^2$. If that were not to be the case, $w^{-1}(l_0)$ would be an irreducible curve $c$ in $S$ with intersection $c\cdot K_S = 2$ as $w$ is a double cover. Hence, $l_0$ is bi-tangent to the quartic. Then, the image $j(l)$ is a curve $C$ in $\rest{P}{l_0} = \eP(\cO_{l_0}\oplus \cO_{l_0}(2))$ which is the zero locus of a global section of $\rest{\cO_P(1)}{l_0}.$ We get the wanted description of lines as in \ref{EQU - droites de P} but we have to verify that the choice of $\cL$ gives the same Hilbert polynomial on both sides.

Let's assume $C$ is isomorphic to $\eP^1,$ then $\rest{\cL}{C} = \rest{\cO_P(1)}{C}\otimes\rest{w_0^\ast \cO_{\eP^2}(1)}{C}$ is isomorphic to $\cO_{\eP^1}(2)\otimes\cO_{\eP^1}(1) = \cO_{\eP^1}(3).$ Thus, its Hilbert polynomial is $P_C(t) = 1+3t = P_l(t)$.

Then the choices coincide, we have the closed immersion $\Hilbsch[(\omega_S^\vee)^{\otimes 3},1+3t]{S}\hookrightarrow \Hilbsch[\cO_P(1)\otimes w_0^\ast\cO_{\eP^2}(1),1+3t]{P}$ and the isomorphism $\Hilbsch[(\omega_S^\vee)^{\otimes 3},1+3t]{S} \simeq \Hilbsch[\omega_S^\vee,1+t]{S}.$
\end{proof}

\begin{para}
The above consideration generalises easily to the following (and this will be of use): 

Let us fix $\eP^n = \eP(V)$ over $\fF$ and $X = \eP_{\eP^n}(\cE)$ with $\cE = \cO_{\eP^n} \oplus \cO_{\eP^n}(d).$ Note $p: X \to \eP^n.$
We will look at sub-schemes of $X$ defined as
\begin{align}\label{EQU - section relatives a hyperplane}
    Z(s),\quad s \in H^0(X,\cO_X(1)\oplus p^\ast \cO_{\eP^n}(1))
\end{align}
i.e. zero loci of sections of $\cO_X(1)$ restricted to a hyperplane of $\eP^n$. We will refer to it by $(n-1)-$planes.
\end{para}

\begin{prop}[1.1 in \cite{Tih_1980}]\label{PROP - schema Hilbert droites de P}
Let $Y$ be a base of the family of $(n-1)-$planes of $X$. Then $Y$ is isomorphic to $\eP(\cO_G \oplus \Symalg[d]{\cQ^\vee})$ with $G = \Grass(n+1,n)$ the Grassmannian of hyperplanes in $\eP^n$ and $\cQ$ the universal quotient of $G$. We will also note $q$ the natural projection from $Y$ to $G.$
\end{prop}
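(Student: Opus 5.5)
We must show that the family of $(n-1)$-planes in $X=\eP(\cO\oplus\cO(d))$ is parametrised by $Y=\eP(\cO_G\oplus\Symalg[d]{\cQ^\vee})$. The plan is to describe such a plane fibrewise over $G$ and then globalise.

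\textbf{Fibrewise description.} A point $[H]\in G$ is a hyperplane $H\subset\eP^n$, and $\rest{X}{H}=\eP_H(\cO_H\oplus\cO_H(d))$. An $(n-1)$-plane lying over $H$ is the zero locus in $\rest{X}{H}$ of a nonzero section of $\rest{\cO_X(1)}{H}$, taken up to scalar. By the EGA convention we have $p_\ast\cO_X(1)\simeq\cE=\cO\oplus\cO(d)$. Hence
\[H^0(\rest{X}{H},\cO(1))\simeq H^0(H,\cO_H)\oplus H^0(H,\cO_H(d)) = \fF\oplus \Symalg[d]{W^\vee},\]
where $W$ is the $n$-dimensional space with $H=\eP(W)$. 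Up to identifying $W$ with the fibre of $\cQ$ or its dual, this is exactly the fibre of $\cO_G\oplus\Symalg[d]{\cQ^\vee}$ at $[H]$; the dualisation has to be checked against the paper's conventions. Its projectivisation, the lines in this vector space, is the fibre of $Y$ over $[H]$.

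\textbf{Globalisation.} Let $I\subset\eP^n\times G$ be the universal hyperplane and $q_2:I\to G$ the projection, so that $I\simeq\eP(\cQ)$ by Lemma \ref{LEM - schema incidence grassmannienne}. Set $X_I=X\times_{\eP^n}I$. Cohomology and base change, applied using the vanishing of higher cohomology of $\cO_{\eP^{n-1}}(d)$, shows that $q_{2,\ast}$ of the pullback of $\cE$ is locally free and isomorphic to $\cO_G\oplus\Symalg[d]{\cQ^\vee}$. The projective bundle of rank-one subsheaves of this direct image then carries a universal section of $\cO(1)$ on $X_I\times_G Y$. Its zero locus $\fU\subset X\times Y$ is flat over $Y$: every fibre is a Cartier divisor in $\rest{X}{H}$ in a fixed linear system, so all fibres have the same Hilbert polynomial. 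This gives a family of $(n-1)$-planes over $Y$.

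\textbf{Universality.} Conversely, let $T$ be the base of a family of $(n-1)$-planes. The family determines a hyperplane of $\eP^n_T$, since the image in $\eP^n$ of each plane is a hyperplane; this gives a map $T\to G$. It also determines a rank-one subsheaf of the pulled-back direct image: up to a unit, this is the defining section of $\cO(1)$ on the pulled-back $X_I$. That subsheaf gives the map $T\to Y$, and the two constructions are mutually inverse.

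\textbf{Main obstacle.} Two points are delicate. The first is showing that, in a flat family, the projection to $\eP^n$ really is a hyperplane family and the divisor really is cut out by a section of $\cO(1)$ restricted to it. Hilbert-polynomial rigidity and the fact that the Picard group of a projective bundle over $\eP^{n-1}$ is $\eZ^2$ should handle this. The second is keeping track of the duals in $\Symalg[d]{\cQ^\vee}$ under the paper's conventions; the plan is to follow \cite[1.1]{Tih_1980} closely for this identification.
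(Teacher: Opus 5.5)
This is essentially the paper's argument. The paper also checks on $T$-points that a point of $Y$ amounts to a hyperplane $H=\eP(W)$ (a $T$-point of $G$) together with a line of sections of $\cO_X(1)$ over $p^{-1}(H)$, via Lemma \ref{LEM - schema Hilbert diviseurs} applied to $\eP(W)$. It avoids your first obstacle by taking the families to be of the form $Z(s_1)\cap p^{-1}(H)$ by definition. Your direct image over $I$ simply makes the universal family explicit.

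On the dual you flag: under the paper's (EGA) conventions, $I\simeq\eP(\cQ)$ and the pull-back of $\cO_{\eP^n}(1)$ to $I$ is $\cO_{\eP(\cQ)}(1)$, so $q_{2,\ast}$ of the pulled-back $\cE$ is $\cO_G\oplus\Symalg[d]{\cQ}$, not $\cO_G\oplus\Symalg[d]{\cQ^\vee}$. Its rank-one subbundles are the rank-one quotients of $\cO_G\oplus(\Symalg[d]{\cQ})^\vee\simeq\cO_G\oplus\Symalg[d]{\cQ^\vee}$ (when $d!$ is invertible, e.g.\ $d=2$ here), which is exactly the statement. Taking lines in $\cO_G\oplus\Symalg[d]{\cQ^\vee}$, as you wrote, would give $\eP(\cO_G\oplus\Symalg[d]{\cQ})$ instead.
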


\begin{rmq}
In our specific case, we get $Y = \eP(\cO_G \oplus \Symalg[2]{\cQ^\vee})$ over $\Grass(3,2).$
\end{rmq}

We will first prove some lemmas.

\begin{lemme}\label{LEM - schema Hilbert diviseurs}
The Hilbert scheme $\Sigma X$ that parameters the zero loci of non-zero sections of $\cO_X(1)$ is isomorphic to the projective space
\[\eP(H^0(X,\cO_X(1))^\vee) \simeq \eP(\Symalg[d]{V^\vee}\oplus \fF).\]
\end{lemme}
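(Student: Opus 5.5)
We have $X = \eP_{\eP^n}(\cE)$ with $\cE = \cO_{\eP^n}\oplus\cO_{\eP^n}(d)$ and structure map $p: X\to\eP^n$. The plan is to identify the space of global sections of $\cO_X(1)$, and then to use the standard fact that the zero loci of non-zero sections of a line bundle are parametrised by the projectivisation of its space of sections.

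The first step computes $H^0(X,\cO_X(1))$. With the EGA convention $X = \Proj{\Symalg{\cE}}$, we have $p_\ast\cO_X(1)\simeq\cE$, the degree-one part of the symmetric algebra. Therefore
\[H^0(X,\cO_X(1))\simeq H^0(\eP^n,\cO_{\eP^n})\oplus H^0(\eP^n,\cO_{\eP^n}(d))\simeq \fF\oplus\Symalg[d]{V^\vee}.\]
Here I use $\eP^n=\eP(V)$, so that $H^0(\eP^n,\cO(1))=V^\vee$ under the convention used in the paper. If the convention gives $V$ instead, only the naming of the dual changes.

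The second step identifies the divisors. Two non-zero sections $s,s'$ of $\cO_X(1)$ have the same zero scheme if and only if $s'=\lambda s$ with $\lambda\in\fF^\times$. Indeed, the zero scheme $Z(s)$ is an effective Cartier divisor $D$ with $\cO_X(D)\simeq\cO_X(1)$, and $s$ is recovered from $D$ up to an automorphism of $\cO_X(1)$. Since $X$ is proper and geometrically integral, $H^0(X,\cO_X)=\fF$, so these automorphisms are exactly the scalars $\fF^\times$. Set-theoretically, then, the family is $(H^0\setminus 0)/\fF^\times$.

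The third step is the main point: upgrading this bijection to an isomorphism of Hilbert schemes. Let $W=H^0(X,\cO_X(1))$. Over $\eP(W^\vee)$, which represents lines in $W$ (rank one subbundles $\cO(-1)\hookrightarrow W\otimes\cO$), we have the tautological section of $\mathrm{pr}_2^\ast\cO_X(1)\otimes\mathrm{pr}_1^\ast\cO(1)$ on $\eP(W^\vee)\times X$. Its zero locus is a flat family of divisors, since it is a relative effective Cartier divisor because the section is non-zero on every fibre. This family gives a morphism $\eP(W^\vee)\to\Sigma X$.

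For the inverse, I take a flat family $\cD\subset T\times X$ whose fibres are divisors in $|\cO_X(1)|$. Then $\cO(\cD)\simeq \mathrm{pr}_X^\ast\cO_X(1)\otimes \mathrm{pr}_T^\ast\cM$ for some line bundle $\cM$ on $T$, by the seesaw principle using $H^0(X,\cO_X)=\fF$ and cohomology and base change. The canonical section then yields $\cM^\vee\hookrightarrow W\otimes\cO_T$ as a subbundle, that is, a $T$-point of $\eP(W^\vee)$. Because $\operatorname{Pic}(X)$ is discrete, this is the classical argument that the scheme of divisors in a fixed linear system is a projective space (as in \cite{FGA_explained}).

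I expect the main obstacle to be this functorial step, in particular checking that every flat family of subschemes with the relevant Hilbert polynomial is a family of divisors in the fixed class $\cO_X(1)$. To get this, I would use that $\operatorname{Pic}(X)\simeq\eZ^2$ is discrete, so the class is locally constant in $T$, and that $H^1(X,\cO_X)=0$, which gives the needed base change.
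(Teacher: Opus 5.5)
Your proposal follows the paper's functor-of-points argument ($T$-points of $\eP(H^0(X,\cO_X(1))^\vee)$ are rank-one subbundles of $H^0(X,\cO_X(1))_T$, and a generator is a section whose zero locus depends only on the line). It is in fact more complete: the paper's proof of this lemma neither computes $H^0$ via $p_\ast\cO_X(1)\simeq\cE$ nor constructs the inverse from flat families of divisors, which your seesaw argument with $H^0(X,\cO_X)=\fF$ and $H^1(X,\cO_X)=0$ supplies.

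Two minor corrections. First, with the paper's EGA convention $\eP(V)=\Proj{\Symalg{V}}$ one has $H^0(\eP(V),\cO(1))=V$, hence $H^0(X,\cO_X(1))\simeq\Symalg[d]{V}\oplus\fF$ (as in the proof of Proposition~\ref{PROP - schema Hilbert droites de P}); this is exactly what dualises to $\eP(\Symalg[d]{V^\vee}\oplus\fF)$. Second, the ``obstacle'' you anticipate is not needed, because $\Sigma X$ is by definition the functor of families of zero loci of sections of $\cO_X(1)$, and your third step already handles that. Moreover, the literal claim that every subscheme with that Hilbert polynomial lies in $|\cO_X(1)|$ can fail in Picard rank $2$: on $\eP^1\times\eP^1$ polarised by $\cO(1,1)$, both rulings have Hilbert polynomial $1+t$.
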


\begin{proof}
Since the schemes at hand represent functors of points, we will check the statement at the level of points.
Let $f : T \to S = \Spec{\fF}$ be a morphism. A $T-$point of a $S-$scheme is a point of $X\times_S T = X_T$, i.e. a morphism $T \to X_T.$

A $T-$point of this projective space is a $1-$dimensional quotient $H^0(X,\cO_X(1))_T^\vee \to L_T$. It is the same as a $1-$dimensional sub-space $L_T^\vee \subseteq H^0(X,\cO_X(1))_T$ by duality. Then, choosing a generator $s$ in $L_T^\vee$ is the same as choosing a non-zero global section of $\cO_X(1).$ Changing $s$ changes the section but not its zero locus so the only important datum is $L_T^\vee.$
\end{proof}

\begin{lemme}\label{LEM - donnee droite comme section et hyperplane}
A line in $X$ is given by the choice of $H$ an hyperplane of $\eP^n$ and of a non-zero section of $\rest{\cO_X(1)}{p^{-1}(H)}.$
\end{lemme}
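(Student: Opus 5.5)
The plan is to describe everything on points, as in the proof of Lemma \ref{LEM - schema Hilbert diviseurs}, and to use the definition (\ref{EQU - section relatives a hyperplane}) of $(n-1)-$planes. Recall that in the setting of the paragraph a line in $X$ (here $n=2$) is the zero locus of a section of $\cO_X(1)\oplus p^\ast\cO_{\eP^n}(1)$. We therefore split such a section $s=(s_1,s_2)$ into its two components and interpret each zero locus separately.

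\textbf{First component.} The section $s_2\in H^0(X,p^\ast\cO_{\eP^n}(1))$ is considered first. By the projection formula, $p_\ast\cO_X=\cO_{\eP^n}$ since $p$ is a projective bundle. Hence
\[H^0(X,p^\ast\cO_{\eP^n}(1))\simeq H^0(\eP^n,\cO_{\eP^n}(1))\simeq V^\vee\]
with the EGA convention $\eP^n=\eP(V)$. So $Z(s_2)=p^{-1}(H)$, where $H=Z(\bar s_2)$ is a hyperplane of $\eP^n$. The zero locus depends only on the line $\fF s_2$, i.e.\ on $H$. This is the same argument as in Lemma \ref{LEM - schema Hilbert diviseurs}, and it identifies the possible $H$ with points of $\Grass(n+1,n)$.

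\textbf{Second component and the key step.} We have
\[Z(s)=Z(s_1)\cap Z(s_2)=Z\bigl(\rest{s_1}{p^{-1}(H)}\bigr)\subseteq p^{-1}(H).\]
Moreover $p^{-1}(H)=\eP_H\bigl(\rest{\cE}{H}\bigr)$ is itself a projective bundle over $H\simeq\eP^{n-1}$, and $\rest{\cO_X(1)}{p^{-1}(H)}=\cO_{p^{-1}(H)}(1)$. So $Z(s)$ is the zero locus of the section $t=\rest{s_1}{p^{-1}(H)}$ of $\rest{\cO_X(1)}{p^{-1}(H)}$. The condition that $Z(s)$ has the right dimension, i.e.\ that it is a line and not all of $p^{-1}(H)$, forces $t\neq 0$.

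Conversely, given $H$ and a nonzero $t$, we first use surjectivity of the restriction
\[H^0(X,\cO_X(1))=H^0(\eP^n,\cE)\to H^0(H,\rest{\cE}{H}),\]
which holds since $H^1(\eP^n,\cE(-1))=0$ for $\cE=\cO\oplus\cO(d)$. This lets us lift $t$ to a section $s_1$, and then $(s_1,s_2)$ recovers the subscheme. Different lifts differ by sections vanishing on $p^{-1}(H)$, so the subscheme $Z(t)$ is independent of the lift. It also depends on $t$ only up to scalars, so the datum is exactly $(H,[t])$.

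\textbf{Main obstacle.} The delicate point is well-definedness: showing that distinct pairs give distinct subschemes (the zero locus determines $H$ and $[t]$), and that the construction works in families over a base $T$ and not just over points. For the first, $H$ is recovered as the image $p(Z(s))$, since $Z(t)$ surjects onto $H$ because $t$ has degree one on the fibres. For the second, one checks flatness and base change via the vanishing of $H^1$ above, which commutes with base change. The resulting description, a hyperplane together with a point of $\eP\bigl(H^0(p^{-1}(H),\cO(1))^\vee\bigr)$, is exactly the fibre description used afterwards in Proposition \ref{PROP - schema Hilbert droites de P}.
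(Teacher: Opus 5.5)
Your proof is correct and follows the paper's route. The $p^\ast\cO_{\eP^n}(1)$-component cuts out $p^{-1}(H)$ for a hyperplane $H$, the $\cO_X(1)$-component matters only through its nonzero restriction to $p^{-1}(H)$, and surjectivity of $H^0(X,\cO_X(1))\to H^0(p^{-1}(H),\rest{\cO_X(1)}{p^{-1}(H)})$ gives the converse. The paper only asserts that surjectivity, whereas you justify it via $H^1(\eP^n,\cE(-1))=0$ and also check that $H$ and $[t]$ are recovered from the zero locus. One small slip: with the EGA convention $H^0(\eP(V),\cO(1))\simeq V$, not $V^\vee$ (the paper itself uses $H^0(\eP(V),\cE)\simeq\Symalg[d]{V}\oplus\fF$), but this does not affect the argument.
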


\begin{rmq}
This lemma is crucial in that it permits to look at the problem "point-wise", with respect to points of the Grassmannian (or hyperplanes of $\eP^n$ equivalently).
\end{rmq}

\begin{proof}
We want to describe $(n-1)-$planes of $X$ as they are defined in \ref{EQU - section relatives a hyperplane}. So, we want the scheme parameterizing the zero loci of sections of $\cO_X(1)\oplus p^\ast \cO_{\eP^n}(1)$ over $X,$ i.e. non-zero sections of $\rest{\cO_X(1)}{p^{-1}(l)}$ where $H$ is a hyperplane of $\eP^n.$ Moreover, the restriction 
\[H^0(X,\cO_X(1)) \to H^0(p^{-1}(H),\rest{\cO_X(1)}{p^{-1}(H)})\]
is surjective. Thus, the datum of a $(n-1)-$plane is that of a hyperplane in $\eP^n$ and a non-zero section of $\rest{\cO_X(1)}{p^{-1}(H)}.$
\end{proof}

\begin{proof}[Proof (of Proposition \ref{PROP - schema Hilbert droites de P})]
From Lemma \ref{LEM - donnee droite comme section et hyperplane}, we can first look at global sections 
\[H^0(X,\cO_X(1)) \simeq H^0(\eP(V),\cE) \simeq \Symalg[d]{V}\oplus \fF.\]
Restricting to a $T-$hyperplane, i.e. a $n-$dimensional quotient $\fq_T$ of $V_T$ which is in fact the choice of a $T-$point in $\Grass(V,n)$ gives the space $\Symalg[d]{\fq_T}\oplus \fF_T$ and in fine $\Symalg[d]{\fq_T^\vee}\oplus \fF_T$ since we look at zero loci. Thus, let us describe $Y$ as a projective bundle $\eP(\cF)$ over the Grassmannian $\Grass(V,n)$, with $\cF = \Symalg[d]{\cQ^\vee}\oplus \cO_G$ and $\cQ$ the universal quotient. Since all the objects represent functors. It is enough to verify the identity at the level of $T-$points.

Let $t$ be a $T-$point of $\eP(\cF).$ We then have the data of a quotient of rank $n,\ V \to W$ corresponding to $q(t)$ and a rank $1$ quotient of $\rest{\Symalg[d]{\cQ^\vee}\oplus \cO_G}{q(t)}$. From the universal property, $\rest{\cQ}{q(t)} = W$, hence a rank $1$ quotient $\Symalg[d]{W^\vee} \oplus \fF_T \to L$. It gives rise to a hyperplane $H = \eP(W)$ and the quotient $L$ determines a unique divisor of $\eP(\rest{\cE}{H})$ by using the Lemma \ref{LEM - schema Hilbert diviseurs} on $\eP(W).$

On the other hand, from a $T-$scheme $C = Z(s_1)\cap p^{-1}(H)$, with $s_1$ a global section of $\cO_X(1)$ and $H$ a hyperplane of $\eP(V_T),$ we have to determine a point of $\eP(\cF).$ The hyperplane $H$ corresponds to a quotient $W$ of dimension $n$ in $V_T$. Thus, we have a $T-$point of the Grassmannian $\Grass(V,n)$ such that $\cQ_T = W.$ Moreover, $C$ is equal to $Z\left(\rest{s_1}{\eP\left(\rest{\cE}{l}\right)}\right)$ and with Lemma \ref{LEM - schema Hilbert diviseurs}, this zero locus is exactly a point of $\eP(\Symalg[d]{W^\vee}\oplus\fF_T).$
\end{proof}

Up to this point, we have the lines of $P.$ We now want to describe the lines in $S,$ that is of a section of $\cO_P(2),$ in $Y.$ To pass this inclusion in $P$ to something on $Y,$ we will follow once again \cite{Tih_1980}. The bridge between the two will be achieved through some incidence schemes.

\begin{lemme}[Incidence scheme between the Grassmannian and $\eP^n$]\label{LEM - schema incidence grassmannienne}
Let $S$ be a base scheme and $\cV$ a locally free sheaf of rank $n+1$ on $S$. There exists an incidence scheme $I$ between points of $G = \Grassm[r]{\cV}$ and projective sub-bundles of dimension $r-1$ of $\eP_S(\cV).$ Moreover, if by $\cQ$ we denote the universal quotient of $G$ then $I \simeq \eP(\cQ)$ over $G$. If $r = 2,$ we also have $I \simeq\eP(\Omega(1))$ over $\eP_S(\cV).$
\end{lemme}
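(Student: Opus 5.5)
The plan is to construct $I$ directly as a closed subscheme of $G\times_S \eP_S(\cV)$ via the functor of points, and then identify it with the projectivisations claimed. Using EGA conventions, a $T$-point of $G = \Grassm[r]{\cV}$ is a locally free rank $r$ quotient $\cV_T \to \cQ_T$, which corresponds to the projective sub-bundle $\eP(\cQ_T) \hookrightarrow \eP(\cV_T)$ of relative dimension $r-1$. A $T$-point of $\eP_S(\cV)$ is a rank one quotient $\cV_T \to L_T$. Define $I$ as the subfunctor of pairs $(\cV_T\to\cQ_T,\ \cV_T\to L_T)$ such that $\cV_T \to L_T$ factors through $\cV_T\to\cQ_T$. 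Equivalently, the composite $\cS_T \to \cV_T \to L_T$ vanishes, where $\cS$ is the universal subsheaf. This vanishing is the zero locus of a section of $\cS^\vee\boxtimes \cO(1)$ on $G\times_S\eP_S(\cV)$. Hence $I$ is represented by a closed subscheme, and it is the incidence scheme: a point lies in a sub-bundle exactly when its defining quotient factors.

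For the first isomorphism, I take the projection $I\to G$. Given a $T$-point of $G$, the factorisations $\cV_T\to\cQ_T\to L_T$ are in bijection with the rank one quotients $\cQ_T\to L_T$. The bijection holds because $\cV_T\to\cQ_T$ is an epimorphism, so the factorisation is unique, and the induced map $\cQ_T\to L_T$ is surjective since $\cV_T \to L_T$ is. These quotients are exactly the $T$-points of $\eP(\cQ)$ over $G$, functorially in $T$. This gives $I\simeq \eP(\cQ)$ over $G$.

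For $r=2$, I take the other projection $I\to \eP_S(\cV)$. Fix a $T$-point $\cV_T\to L_T$ with kernel $K_T$. This $K_T$ is, up to the twist, the pullback of $\Omega(1)$ from the Euler sequence $0\to \Omega(1)\to \cV\otimes\cO \to \cO(1)\to 0$ (EGA convention). A rank $2$ quotient $\cQ_T$ through which $\cV_T\to L_T$ factors is determined by its kernel $\cS_T\subseteq K_T$, which has corank $1$ in $K_T$. Conversely, any corank $1$ locally split subsheaf of $K_T$ gives a quotient $\cV_T/\cS_T$ of rank $2$ surjecting onto $L_T$. Thus the fibre functor is the set of rank one quotients of $K_T$, i.e. of $\Omega(1)_T$, which gives $I\simeq\eP(\Omega(1))$ over $\eP_S(\cV)$.

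The main obstacle is the bookkeeping of conventions: EGA's quotient convention for $\eP$ and for $\Grassm[r]{\cV}$, and the correct sign of the twist in the Euler sequence. I would check these carefully so that the kernel of $\cV\otimes\cO\to\cO(1)$ is literally $\Omega(1)$. I would also verify that local splitness of the subsheaves is preserved, which holds because all the sheaves involved are locally free quotients.
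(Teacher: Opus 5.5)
Your proof is correct, and it is organised differently from the paper's. The paper does not define $I$ inside $G\times_S\eP_S(\cV)$. Instead it \emph{sets} $I:=\Grassm[r-1]{\Omega_{\eP_S(\cV)}(1)}$ over $\eP_S(\cV)$, so the $r=2$ claim is built into the definition. It then combines the tautological sequence of this Grassmann bundle with the pulled-back Euler sequence to get a rank $r$ quotient $\cQ_r$ of $\cV$, hence a map $f\colon I\to G$. It lifts $f$ to $i\colon I\to\eP(\cQ)$ using the cokernel $p^\ast\cO_{\eP_S(\cV)}(1)$ of $\cQ_1\to f^\ast\cQ$, and concludes by checking $i$ on points.

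You instead define $I$ symmetrically, as the zero scheme of the composite $\cS\to\cV\to\cO(1)$ in the product, and identify each projection by describing its fibres on $T$-points. The linear algebra is the same. Your correspondence between rank $r$ quotients of $\cV_T$ through $L_T$ and rank $r-1$ quotients of $K_T$ is exactly the paper's diagram, and your $r=2$ argument works verbatim for all $r$, recovering $\Grassm[r-1]{\Omega(1)}$.

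Your packaging has two advantages. It shows directly that $I$ is the incidence correspondence and is closed in $G\times_S\eP_S(\cV)$. And since you produce natural bijections of $T$-points for every $T$, both isomorphisms follow from Yoneda, with no separate ``isomorphism on points'' step.

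The paper's construction, in turn, explicitly records $p^\ast\cO_{\eP_S(\cV)}(1)\simeq i^\ast\cO_{\eP(\cQ)}(1)$. The later push-pull identifications rely on this, e.g.\ obtaining $\Symalg[2]{\cQ}$ for the degree 4 surfaces. Your description gives this for free, since the universal rank one quotient on $I$ is at once the pulled-back tautological quotient of $\cV$ and the tautological quotient of $\cQ$. It is worth stating explicitly.

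As for your worry about the twist: with the EGA convention, the kernel of $\cV\otimes\cO\to\cO(1)$ is literally $\Omega(1)$. This follows by twisting the relative Euler sequence $0\to\Omega\to\cO(-1)\otimes\cV\to\cO\to 0$ by $\cO(1)$.
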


\begin{proof}
We will build $I$ as $\Grassm[r-1]{\Omega_{\eP_S(\cV)}(1)}$ over $\eP_S(\cV)$ and then verify that it has a morphism to $\Grassm[r]{\cV}$ which factorises through $\eP(\cQ)$ into an isomorphism $i$. The Euler sequence of $\Grassm[r-1]{\Omega_{\eP_S(\cV)}(1)}$ (the vertical row on the left) and the one of $\eP_S(\cV)$ (the middle horizontal line) give the following diagram
\[\xymatrix{\cS_1 \ar@{=}[r] \ar[d] &\cS_1  \ar[d] \ar[r] & 0\ar[d]\\
p^\ast\Omega_{\eP_S(\cV)}(1) \ar[r] \ar[d] & p^\ast(\cV\otimes \cO_{\eP_S(\cV)}) \ar[r]\ar[d] & p^\ast\cO_{\eP_S(\cV)}(1) \ar[d] \\
\cQ_1 \ar[r] & \cQ_r \ar[r]  & \cL_1  }\]
where the Euler exact sequence of $\eP_S(\cV)$ was pulled back by $p : \Grassm[r-1]{\Omega_{\eP_S(\cV)}(1)} \to \eP_S(\cV).$
The induced locally free quotient sheaf $\cQ_r$ of rank $r$ of $\cV\otimes\cO_{\Grassm[r-1]{\Omega_{\eP_S(\cV)}(1)}}$ gives a morphism to $\Grassm[r]{\cV}.$ Let's call this morphism $f$ and then $\cQ_r = f^\ast\cQ$ with $\cQ$ the universal quotient of $\cV\otimes \cO_{\Grassm[r]{\cV}}.$

To factorise $f$ through $q : \eP(\cQ) \to \Grassm[r]{\cV},$ we need a rank one quotient of $f^\ast\cQ.$ We complete the previous diagram, and we have it as the cokernel $\cL_1$ of $\cQ_1 \to f^\ast \cQ,$ which is isomorphic to $p^\ast \cO_{\eP_S(\cV)}(1).$ So we have a morphism $i : \Grassm[r-1]{\Omega_{\eP_S(\cV)}(1)} \to \eP(\cQ)$ and $p^\ast\cO_{\eP_S(\cV)}(1) = i^\ast\cO_{\eP(\cQ)}(1).$ Since all objects represent functors and that $i$ is an isomorphism at the level of points (just by base-changing to points the above diagram), $i$ is an isomorphism of schemes. 
\end{proof}

\begin{rmq}
From the proof, we have in particular that $p^\ast \cO_{\eP_S(\cV)}(1) = \cO_{\eP(\cQ)}(1)$ when identifying $I$ as $\eP(\cQ).$
\end{rmq}

\begin{prop}\label{PROP - dimension du fibre vectoriel voulu}
Let's write $\fU \subset X\times Y$ the incidence scheme, following from the universal property of the Hilbert scheme $Y.$ It has two natural projections $\pi_1: \fU \to X$ et $\pi_2: \fU \to Y$. Then, the sheaf $\pi_{2,\ast}\pi_1^\ast \cO_X(m)$ is locally free of rank $\dim H^0(\eP^{n-1},\cO_{\eP^{n-1}}(dm))$ on $Y$.
\end{prop}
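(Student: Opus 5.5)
We prove Proposition~\ref{PROP - dimension du fibre vectoriel voulu} with Grauert's theorem (cohomology and base change) applied to the flat proper morphism $\pi_2$. The plan has three steps.

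\textbf{Step 1: $\pi_2$ is flat and proper, and its fibres are explicit.} Since $\fU$ is the universal family of the Hilbert scheme $Y$, the morphism $\pi_2$ is flat and proper. By Proposition~\ref{PROP - schema Hilbert droites de P} and Lemma~\ref{LEM - donnee droite comme section et hyperplane}, a point $y\in Y$, with residue field $\kappa(y)$, corresponds to a hyperplane $H\simeq \eP^{n-1}$ of $\eP^n$ and a nonzero section $s$ of $\rest{\cO_X(1)}{p^{-1}(H)}$. The fibre $\fU_y$ is therefore the divisor $Z(s)$ in the $\eP^1$-bundle $p^{-1}(H)=\eP_{H}(\cO_H\oplus\cO_H(d))$. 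This divisor has class $\cO(1)$ and is a section, or "2-section", of $p$ over $H$, depending on the point of the projective bundle.

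\textbf{Step 2: compute the cohomology of $\cO_X(m)$ on each fibre.} Let $Z=\fU_y$ and $P_H=p^{-1}(H)$. We use the exact sequence
\[0\to \cO_{P_H}(m-1)\to \cO_{P_H}(m)\to \rest{\cO_X(m)}{Z}\to 0.\]
Pushing forward along $p$ gives $H^i(P_H,\cO_{P_H}(k))=H^i(H,\Symalg[k]{\cO_H\oplus\cO_H(d)})$, since the higher direct images vanish for $k\ge -1$. The symmetric power is a direct sum of line bundles $\cO_H(jd)$ with $j\ge 0$, so for $m\ge 1$ all higher cohomology vanishes. Taking the difference of global sections gives
\[h^0(Z,\cO_X(m)|_Z)=\sum_{j=0}^{m}h^0(\cO_{\eP^{n-1}}(jd))-\sum_{j=0}^{m-1}h^0(\cO_{\eP^{n-1}}(jd))=h^0(\eP^{n-1},\cO_{\eP^{n-1}}(dm)).\]
We still need $H^1(P_H,\cO(m-1))=0$ for this to be exact on global sections; it holds by the same vanishing. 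In the degenerate case where $s$ is the section coming from the $\fF$-summand, $Z$ is the section $\eP(\cO_H(d))\simeq H$ on which $\cO_X(1)$ restricts to $\cO_H(d)$. The formula then gives $h^0(\cO_{\eP^{n-1}}(dm))$ directly, which is a useful consistency check.

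\textbf{Step 3: apply Grauert.} The dimension $h^0$ computed in Step 2 is constant in $y$, and $h^i=0$ for $i>0$. Because $\pi_2$ is flat and proper and $\pi_1^\ast\cO_X(m)$ is flat over $Y$, Grauert's theorem (or cohomology and base change) shows that $\pi_{2,\ast}\pi_1^\ast\cO_X(m)$ is locally free of the stated rank, and that its formation commutes with base change.

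The main obstacle is Step 2: the statement must hold uniformly over non-closed points and for every shape of fibre. This requires the base field of each fibre to be $\kappa(y)$ and the vanishing of higher cohomology to be checked over $\kappa(y)$. Both follow because everything is defined over $\fF$ and commutes with field extension. We also need $m\ge 1$, or $m\ge 0$, implicitly; we restrict to this range, which covers the case $m=2$ used later.
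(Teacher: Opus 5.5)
Your proof is correct. Its skeleton is the same as the paper's: $\pi_2$ is flat and proper, higher cohomology vanishes on every fibre, and cohomology and base change finishes. Where you differ is the fibrewise computation.

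The paper sorts fibres into two types and checks $H^1=0$ on each:
\begin{itemize}
\item irreducible ones, which $p$ maps isomorphically onto $H$, so that $\cO_X(m)$ restricts to $\cO_H(md)$;
\item reducible ones $E_0|_H\cup p^{-1}(H\cap Z_d)$, via Lemma \ref{LEM - diviseurs reductibles de O_X1} together with the birational map $\Phi$ and Lemma \ref{LEM - incidence locale}.
\end{itemize}
It reads the rank off an irreducible fibre and lets local freeness transfer it to the others.

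You instead use only that every fibre is an effective Cartier divisor in $|\cO_{P_H}(1)|$ on the integral scheme $P_H=\eP_H(\cO_H\oplus\cO_H(d))$. The ideal-sheaf sequence together with $p_\ast\cO_{P_H}(k)=\Symalg[k]{\cO_H\oplus\cO_H(d)}$ then gives $h^0$ and the vanishing of all $h^i$, $i>0$, in one stroke.

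Your route does not depend on the shape of the fibre. Non-reduced fibres (when $Z_d\cap H$ is non-reduced) need no separate treatment, and neither does the cohomology of the reducible union, for which the paper only describes the restrictions to the two components. You also obtain $h^0$ on every fibre directly. The paper's route, in exchange, makes the geometry of the fibres explicit and ties it to the birational description of $\fU$.

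Note also that vanishing of $H^1$ on all fibres already suffices for local freeness and base change. Constancy of $h^0$ is therefore an output rather than a needed input, so Grauert is more than you need.

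Two slips in Step 1 do not affect the argument:
\begin{itemize}
\item A divisor in $|\cO_{P_H}(1)|$ has degree $1$ on the fibres of $p$. It is therefore either a section or $E_0|_H\cup p^{-1}(Z_d\cap H)$, never a $2$-section.
\item The zero locus of the section from the $\fF$-summand, $\eP(\cO_H(d))$, is an ordinary section disjoint from $E_0|_H$. The genuinely degenerate (reducible) fibres are those of sections with vanishing $\fF$-component.
\end{itemize}
Finally, injectivity on the left of your sequence uses that $s\neq 0$ is a nonzerodivisor, which holds because $P_H$ is integral.
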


\begin{lemme}\label{LEM - diviseurs reductibles de O_X1}
Note $E_0$ the divisor given by the projection $s_0: \cO_{\eP^n}(d)\oplus \cO_{\eP^n} \to \cO.$

If the zero locus of a section of $\cO_X(1)$ is reducible, it is of the form $E_0 \cup p^{-1}(Z_d)$ with $Z_d$ a degree $d$ divisor in $\eP(V)$. 
\end{lemme}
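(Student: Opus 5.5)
The plan is to work directly with the global sections of $\cO_X(1)$, using $X=\eP(\cE)$ with $\cE=\cO_{\eP^n}\oplus\cO_{\eP^n}(d)$. By the projection formula, $H^0(X,\cO_X(1))\simeq H^0(\eP^n,\cE)\simeq \fF\oplus H^0(\eP^n,\cO(d))$. So a section $s$ is a pair $(a,f)$ with $a\in\fF$ and $f$ a form of degree $d$. Under this identification, the summand $\cO_{\eP^n}$ gives the section $s_0$ whose zero locus is $E_0$, and the summand $\cO(d)$ gives a section $s_1$ with $\cO_X(1)\simeq \cO_X(E_0)\otimes p^\ast\cO(d)$ up to the identification of divisors. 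Hence $Z(s)$ is the divisor of $a\,s_1+ f\,s_0$ in a suitable normalisation; in fact $Z(s_1)=E_\infty$ is the other section, disjoint from $E_0$. I would fix this dictionary first, checking it fibrewise over $\eP^n$: on each fibre $\eP^1$ the section restricts to the linear form $a\,x_1+f(v)\,x_0$.

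There are then two cases.

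\textbf{Case $a=0$.} Here $s=f\cdot s_0$ with $s_0\in H^0(X,\cO_X(1)\otimes p^\ast\cO(-d))$, and the zero locus is $Z(s_0)\cup Z(p^\ast f)=E_0\cup p^{-1}(Z_d)$ with $Z_d=V(f)$ of degree $d$. This is exactly the claimed form.

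\textbf{Case $a\neq 0$.} I want to show $Z(s)$ is irreducible. On each fibre the linear form $a x_1+f(v)x_0$ is nonzero because $a\ne0$, so it vanishes at exactly one point. Hence $Z(s)\to\eP^n$ is bijective. More precisely, $Z(s)$ is the image of a section $\sigma_f:\eP^n\to X$: it corresponds to the surjection $\cE\to\cO(d)$ given by $(g,h)\mapsto a h - f g$, up to sign conventions, whose kernel is a line subbundle isomorphic to $\cO$. So $Z(s)\simeq\eP^n$ is integral and thus irreducible. To make "zero locus equals image of section" scheme-theoretically precise, I would compute in the affine chart $x_1\neq0$, where $Z(s)$ is cut out by $a+f\,t=0$ and is therefore a graph; no point lies on $x_1=0$ since there $a\neq0$.

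The main obstacle is bookkeeping rather than substance. Given the EGA convention $\eP(\cE)=\Proj\Symalg{\cE}$, I must pin down which summand produces $E_0$ and confirm that the twist $\cO_X(1)\otimes p^\ast\cO(-d)$ is right. I would first check this on a single fibre and then globalise.
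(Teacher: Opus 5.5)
Your argument is the paper's own: identify $H^0(X,\cO_X(1))$ with $H^0(\eP^n,\cE)$ and note that the reducible case is exactly the one where the $\cO$-component $a$ vanishes. This is the paper's ``composition with $s_0$ is $0$''. In that case $s=p^\ast f\cdot s_0$ gives $E_0\cup p^{-1}(V(f))$. You also go further than the paper, which never justifies that $a\neq 0$ yields an irreducible divisor. Your identification $Z(s)=\eP(\cE/\cO\cdot(a,f))\simeq\eP^n$ supplies this.

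There are two bookkeeping slips to fix.

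\emph{The summands in your first paragraph are swapped.} Under $p_\ast\cO_X(1)=\cE$, the $\cO$-summand gives the section $s_1\in H^0(X,\cO_X(1))$. It vanishes on $E_\infty$, the section of $p$ attached to the quotient $\cE\to\cO(d)$. The divisor $E_0$ is instead cut out by the section $s_0$ of $\cO_X(1)\otimes p^\ast\cO(-d)$ attached to $\cO(d)\hookrightarrow\cE$. This is what your Case $a=0$ actually uses, so only the wording needs correcting.

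\emph{Your affine chart is the wrong one.} With the fibrewise form $a x_1+f(v)x_0$, you have $E_0=\{x_0=0\}$ and $E_\infty=\{x_1=0\}$. The graph description lives on $x_0\neq0$: there the equation is $au+f=0$ with $u=x_1/x_0$, so $u=-f/a$. It is $E_0$ that $Z(s)$ avoids, because on $\{x_0=0\}$ the form reduces to $a x_1$. In your chart $x_1\neq 0$, the equation $a+ft=0$ has no solution over $V(f)$, so it is not a graph. Moreover, on $\{x_1=0\}$ the form is $f x_0$, which vanishes over $V(f)$, so ``no point lies on $x_1=0$'' is false.

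The chart computation is superfluous anyway. For a sub-line-bundle $L\subset\cE$ with locally free quotient, the zero scheme of the induced section of $\cO_X(1)\otimes p^\ast L^{\vee}$ is $\eP(\cE/L)$ scheme-theoretically. This already gives what you want.
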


\begin{proof}
Observe that $p_\ast \cO_X(1) = \cE = \cO_{\eP^n}(d)\oplus \cO_{\eP^n}$ and reducible divisors come from sections $\cO_{\eP^n} \to p_\ast\cO_X(1)$ such that the composition with $s_0$ is 0. So, these sections define an element of $\cO_{\eP^n}(d)$, i.e. a degree $d$ divisor on $\eP^n.$
\end{proof}

\begin{lemme}\label{LEM - incidence locale}
The incidence scheme $\fD$ between $X$ and the Hilbert scheme $\Sigma X$ (c.f. \ref{LEM - schema Hilbert diviseurs}) is closed in $X\times \Sigma X.$ Furthermore, it is birational to the projective bundle $\eP_{\Sigma X}(V \otimes \cO_{\Sigma X})$.
\end{lemme}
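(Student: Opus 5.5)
We need to prove Lemma \ref{LEM - incidence locale}: the incidence scheme $\fD$ between $X$ and $\Sigma X$ is closed in $X\times\Sigma X$, and it is birational to $\eP_{\Sigma X}(V\otimes\cO_{\Sigma X})$.

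For closedness, I would describe $\fD$ as the zero locus of a tautological section. On $X\times\Sigma X$, with projections $a$ and $b$, the identification $\Sigma X\simeq\eP(H^0(X,\cO_X(1))^\vee)$ of Lemma \ref{LEM - schema Hilbert diviseurs} supplies a universal section. Composing the tautological map $\cO_{\Sigma X}(-1)\to H^0(X,\cO_X(1))\otimes\cO_{\Sigma X}$ with evaluation gives a section
\[\sigma\in H^0\left(X\times\Sigma X,\ a^\ast\cO_X(1)\otimes b^\ast\cO_{\Sigma X}(1)\right).\]
At a $T$-point $(x,[s])$, $\sigma$ vanishes exactly when $s(x)=0$, that is, when $x\in Z(s)$. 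So $\fD=Z(\sigma)$ as functors of points, and $\fD$ is a closed subscheme, in fact an effective Cartier divisor. This matches the description of $\fD$ as the universal family of the Hilbert scheme $\Sigma X$, which is closed by construction.

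For birationality, I would use $p:X\to\eP^n=\eP(V)$. A general section $s=(f,c)\in\Symalg[d]{V}\oplus\fF$ has $c\neq0$. By Lemma \ref{LEM - diviseurs reductibles de O_X1}, such a zero locus meets every fibre of $p$ in exactly one point: on the fibre $\eP(\cO\oplus\cO(d))_y\simeq\eP^1$ the section is a linear form with the nonzero coefficient $c$. Hence $p$ restricts to an isomorphism $Z(s)\to\eP^n$, and it is a section of $p$ avoiding $E_0$.

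Accordingly, I would take the open set $U=\{c\neq0\}\subset\Sigma X$ and the map $(\mathrm{id}_{\Sigma X},p):\fD\to\Sigma X\times\eP(V)=\eP_{\Sigma X}(V\otimes\cO_{\Sigma X})$. Over $U$ this map is an isomorphism. Its inverse sends $([s],y)$ to the unique point of $p^{-1}(y)$ cut out by $s$, which is a morphism given explicitly by the coordinates $(f(y):-c)$ up to the usual convention.

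It remains to check that $\fD\cap(X\times U)$ is dense in $\fD$, so that the birationality is meaningful. This is the step I expect to be the main obstacle, because the reducible divisors over $\{c=0\}$ have an extra component $E_0$. I would handle it with a dimension count. The locus $\{c=0\}$ is a hyperplane in $\Sigma X$, and the fibres of $\fD$ over it are $E_0\cup p^{-1}(Z_f)$, of dimension $n$. So $\fD|_{\{c=0\}}$ has dimension $\dim\Sigma X-1+n$, strictly smaller than $\dim\fD=\dim\Sigma X+n$. Since $\fD$ is a Cartier divisor in the irreducible variety $X\times\Sigma X$, every component of $\fD$ has dimension $\dim\Sigma X+n$, so none is contained over $\{c=0\}$. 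This gives irreducibility of $\fD$ and the birational statement.
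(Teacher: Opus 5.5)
Your proposal is correct and takes the same route as the paper: the map $\varphi = p\times\mathrm{id}\colon \fD\to \eP(V)\times\Sigma X$ is an isomorphism away from the hyperplane $\fH_\infty=\{c=0\}$, which parametrises the reducible divisors $E_0\cup p^{-1}(Z_d)$. You spell out steps the paper leaves implicit (closedness via the tautological section, the explicit inverse over $U$, and the dimension count showing no component of $\fD$ lies over $\fH_\infty$), while the paper instead asserts the sharper statement that $\varphi$ is the blow-up along the universal degree-$d$ divisor in $\eP(V)\times\fH_\infty$.
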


\begin{proof}\label{DEM - incidence locale}
It is clear that $\fD$ is closed in $X\times \Sigma X$ and we can write the diagram
\[\xymatrix{\fD \ar[rr]^{\varphi = p\times \operatorname{id}\quad } \ar[d]_{p_2} && \eP(V) \times \Sigma X \ar[d]^{\operatorname{pr}_2} \\\Sigma X \ar@{=}[rr]&& \Sigma X.}\]
We will show that $\varphi$ is a birational morphism of $(\Sigma X)-$schemes.

From Lemma \ref{LEM - schema Hilbert diviseurs}, we get $\Sigma X \simeq \eP(\Symalg[d]{V^\vee} \oplus \fF)$. In this projective space, we have a preferred hyperplane $\fH_\infty = \eP(\Symalg[d]{V^\vee})$ corresponding to the projection $\Symalg[d]{V^\vee}\oplus \fF \to \Symalg[d]{V^\vee}$.

To understand $\varphi,$ we have to look at its action on $(n-1)-$planes, i.e. the projection of zero-locus of non-zero sections of $\cO_X(1)$ to $\eP(V).$ We either get an irreducible divisor $D \subseteq X$ and an isomorphism $\rest{p}{D}: D \to \eP(V)$ or a reducible divisor of the form $E_0 \cup p^{-1}(Z_d)$ (c.f. Lemma \ref{LEM - diviseurs reductibles de O_X1}). The degree $d$ divisors in $\eP(V)$ have their zero loci precisely described by $\eP(\Symalg[d]{V^\vee}).$ Thus the reducible divisors correspond to points in $\fH_\infty.$ This describes a blow-up of centre the universal element in $\eP(V) \times \fH_\infty$ corresponding to degree $d$ divisors of $\eP(V).$
\end{proof}

\begin{proof}[Proof (of Proposition \ref{PROP - dimension du fibre vectoriel voulu})]
We have $\fU$ and its two projections $\pi_1, \pi_2.$ We can compose the inclusion $\fU \hookrightarrow X \times Y$ by the projection $p\times q: X \times Y \to \eP(V) \times \Grass(V,n).$ This projection tautologically factorise through the incidence scheme of hyperplanes in $\eP(V),\ I$ (c.f. Lemma \ref{LEM - schema incidence grassmannienne}). In particular, we get an induced morphism 
\[\Phi: \fU \to I\times_{\Grass(V,n)} Y \simeq \eP_I(q_2^\ast \cF)\]
noting $q_2$ the projection $I \to \Grass(V,n).$
Let us fix a $(T-)$point of the Grassmannian, i.e. a quotient $V_T \to W$ of dimension $n.$ Lemma \ref{LEM - incidence locale} applied on $\eP(W)$ together with Lemma \ref{LEM - donnee droite comme section et hyperplane} give that $\Phi$ is birational and induces an isomorphism over $\eP_I(q_2^\ast \cF) \setminus \eP_I(q_2^\ast \Symalg[d]{Q^\vee})$.

Let $F = \pi_2^{-1}(s)$ be a fiber over some $s$ a $T-$point in $Y$ and $H = \eP(W) = \eP^{n-1}$ be the corresponding hyperplane to $q(s).$ Everything is then happening over $H$. So, we can decompose between reducible and irreducible divisors of $\cO_{\eP(\rest{\cE}{H})}(1)$ as done in the proof of Lemma \ref{LEM - incidence locale}. Indeed, in the universal family, $F$ is a divisor as the zero locus in $\rest{X}{H}$ of a non-zero global section of $\cO_{\eP(\rest{\cE}{H})}(1)$. Then, if $F$ is irreducible, it is isomorphic to $H$ through the projection. Thus, we have $\pi_1^\ast\rest{\cO_X(m)}{F} \simeq \rest{\cO_{\eP(\rest{\cE}{H})}(m)}{D} \simeq \cO_H(md).$ On the other hand, if $F$ is reducible, it is isomorphic to $\rest{E_0}{H} \cup p^{-1}(H \cap Z_d)$ with $Z_d$ a degree $d$ divisor in $H$. By nature of $E_0$, we get $\rest{\cO_X(m)}{\rest{E_0}{H}} = \cO_{\rest{E_0}{H}}$ et $\rest{\cO_X(m)}{p^{-1}(H \cap Z_d)} = \cO_{\eP(\rest{\cE}{H\cap Z_d})}(m).$ In each case, we have $H^1(F,\pi_1^\ast\rest{\cO_X(m)}{F}) = 0.$ Since the projection morphism $\pi_2$ is proper and flat, we get, by flat base change of cohomology \cite[\href{https://stacks.math.columbia.edu/tag/02KH}{Tag 02KH}]{stacks-project} (or \cite[Corollaire 6.9.10]{EGA3_2}) part 1:
\[R^1\pi_{2,\ast}\pi_1^\ast \cO_X(m) = 0.\]
So, $\pi_{2,\ast}\pi_1^\ast \cO_X(m)$ is a locally free sheaf on $Y$. Its rank is the dimension of its fibers which is, by \cite[\href{https://stacks.math.columbia.edu/tag/02KH}{Tag 02KH}]{stacks-project} part 2:
\[\dim H^0\left(F,\rest{\pi_1^\ast\cO_X(m)}{F}\right) = \dim H^0(\eP^{n-1},\cO_{\eP^{n-1}}(md)).\]
\end{proof}

\begin{cor}\label{COR - droites comme zero section fibre}
For the double cover $S$ of $\eP^2$ ramified on a smooth quartic, we get that the Hilbert scheme of lines is isomorphic to the zero locus of a global section of the locally free sheaf $\cM = \pi_{2,\ast}\pi_1^\ast \cO_P(2)$ of rank $5$ on $Y = \eP(\cO_G \oplus \Symalg[2]{\cQ^\vee})$ (a projective bundle of rank $3$ over the Grassmannian $G = \Grass(3,2)$).
\end{cor}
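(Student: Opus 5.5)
We specialise Proposition \ref{PROP - dimension du fibre vectoriel voulu} to $n=2$, $d=2$, $X=P=\eP(\cO_{\eP^2}\oplus\cO_{\eP^2}(2))$ and $m=2$. The rank of $\cM$ is then $\dim H^0(\eP^1,\cO_{\eP^1}(4))=5$, which is the rank claimed. By Proposition \ref{PROP - schema Hilbert droites de P}, the base of the family of lines in $P$ is $Y=\eP(\cO_G\oplus\Symalg[2]{\cQ^\vee})$ over $G=\Grass(3,2)$. The sheaf $\cO_G\oplus\Symalg[2]{\cQ^\vee}$ has rank $1+3=4$, so $Y$ is a $\eP^3$-bundle, i.e. a projective bundle of rank $3$ in the sense of the statement, and $\dim Y=2+3=5$. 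This matches the rank of $\cM$, as it must for a zero-dimensional zero locus.

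Next we build the section. By Proposition \ref{PROP - surface deg 2 dans fibre projectif}, $S=Z(\sigma)$ for some $\sigma\in H^0(P,\cO_P(2))$. Pulling back gives $\pi_1^\ast\sigma\in H^0(\fU,\pi_1^\ast\cO_P(2))$. Since $H^0(Y,\pi_{2,\ast}\pi_1^\ast\cO_P(2))=H^0(\fU,\pi_1^\ast\cO_P(2))$, this is a section $\tilde\sigma$ of $\cM$. Proposition \ref{PROP - dimension du fibre vectoriel voulu} gives $R^1\pi_{2,\ast}=0$ and local freeness, so cohomology and base change holds. Hence for any $T$-point $t$ of $Y$, the fibre $\tilde\sigma(t)$ is the restriction of $\sigma$ to the line $\fU_t\subset P_T$. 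It follows that $t\in Z(\tilde\sigma)$ if and only if $\fU_t\subseteq S_T$. This gives an identification of functors $Z(\tilde\sigma)\simeq\{\text{lines of }P\text{ contained in }S\}=\Hilbsch[j^\ast\cL,1+3t]{S}$, using the compatibility of Hilbert polynomials from Lemma \ref{LEM - droites P comme zero sections}.

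Finally, by the same lemma, $\Hilbsch[j^\ast\cL,1+3t]{S}$ is isomorphic to $\Hilbsch[\omega_S^\vee,1+t]{S}=\Hdrtes S$.

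The main obstacle is showing that every curve in $S$ with Hilbert polynomial $1+3t$ (with respect to $j^\ast\cL=(\omega_S^\vee)^{\otimes3}$) is a line of $P$ in the sense of (\ref{EQU - section relatives a hyperplane}), scheme-theoretically and over a general base $T$. This is needed for the inclusion of the Hilbert scheme of $S$ into $Y$ to be surjective onto $Z(\tilde\sigma)$. To prove it, we first argue over $\bar\fF$ as in Lemma \ref{LEM - droites P comme zero sections}: such a curve projects isomorphically onto a bitangent $l_0$ and is cut out in $P|_{l_0}$ by a section of $\cO_P(1)$. We then use that $\Hilbsch{P}\supseteq Y$ is a closed condition to deduce that the natural morphism from the Hilbert scheme of $S$ factors through $Y$. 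Together with the functorial description above, this yields mutually inverse maps of functors.
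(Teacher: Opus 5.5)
Up to your last paragraph this is the paper's argument, written out in more detail. The paper states the corollary without a separate proof, as the case $n=d=m=2$ of Proposition \ref{PROP - dimension du fibre vectoriel voulu} (rank $h^0(\eP^1,\cO_{\eP^1}(4))=5$), combined with Propositions \ref{PROP - surface deg 2 dans fibre projectif}, \ref{PROP - schema Hilbert droites de P} and Lemma \ref{LEM - droites P comme zero sections}. Your construction of $\tilde\sigma$ is correct. So is the base-change step, which uses the vanishing of $H^1$ on all fibres of $\pi_2$ shown in that proof. The description of $Z(\tilde\sigma)$ as the set of $t$ with $\fU_t\subseteq S_T$ is also correct.

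The gap is in the step you yourself single out: the equality of $\{t:\fU_t\subseteq S_T\}$ with $\Hilbsch[1+t]{S}$ as schemes. The paper only establishes this on $\bar\fF$-points, via Lemma \ref{LEM - droites P comme zero sections}. Your patch does not close it. Knowing that $Y$ is closed in $\Hilbsch{P}$ and that every $\bar\fF$-point of $\Hilbsch[1+t]{S}$ lands in $Y$ only gives a factorisation of $\Hilbsch[1+t]{S}_{\mathrm{red}}$ through $Y$. A closed condition tested on points says nothing about nilpotent directions. The missing input is that $\Hilbsch[1+t]{S}$ is reduced, indeed étale over $\fF$. Each of its geometric points is a $(-1)$-curve $C\simeq\eP^1$, so $\cN_{C/S}\simeq\cO_{\eP^1}(-1)$ and the tangent space $H^0(C,\cN_{C/S})$ vanishes.

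Two further points need justification:
\begin{enumerate}
\item The map $Y\to\Hilbsch{P}$ must be a closed immersion. It is proper, so it suffices that it is a monomorphism, i.e. that $\fU_t$ recovers the line $H$ and the section up to a unit.
\item Over $\bar\fF$, every subscheme of $S$ with Hilbert polynomial $1+t$ for $\omega_S^\vee$ must really be such a curve. Its one-dimensional part is an integral curve $C$ with $-K_S\cdot C=1$. Adjunction and the Hodge index theorem give $C^2=2p_a-1\le 0$, hence $C^2=-1$, $p_a=0$ and $\chi(\cO_C)=1$. This leaves no room for embedded or isolated points.
\end{enumerate}
With these facts, the two maps of functors you describe are indeed mutually inverse.
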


\begin{rmq}\label{RMQ - manque information}
Even though we lack information about $\cM$, we are in a special case were computation is still possible thanks to the results of part \ref{SSEC - calcul classes Euler}.
\end{rmq}

\subsection{Euler class of an odd rank sheaf}
Since we don't have an explicit description of the determinant, we don't know if we can take the quadratic degree. We only have $e(\cM) \in \eE^5(Y,\det \cM)$ which is isomorphic to $\eE^{BM}_0(Y/G,\det \cM \otimes \omega_{Y/G}^\vee)$ for $\eE$ any $\SL^c-$oriented theory.

But if we specialise to Milnor $K-$theory, i.e. Chow groups, it suffice to compute the Chern class when $\fF$ is algebraically closed. The degree of $c_5(\cM)$ is then the length of the Hilbert scheme of lines of a degree 2 smooth del Pezzo surface over $\bar{\fF}.$ It is known that there are 56 such lines (c.f. \cite[Section 8.7]{Dolgachev}).

\begin{lemme}\label{LEM - classe chern M sur C}
Let $\cM= \pi_{2,\ast}\pi_1^\ast\cO_{\eP^2}(2)$ be the same locally free sheaf of rank $5$ over $Y= \eP(\cO_G\oplus \Symalg[2]{\cQ^\vee})$ as before. Then the top Chern class is 
\[c_5(\cM) = 56[pt] \in \Chowct[5]{Y} \simeq \eZ\cdot [pt].\]
\end{lemme}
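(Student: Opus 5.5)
The plan is to reduce to a computation over the algebraic closure and then use the enumerative meaning of the top Chern class. First I would check that $\Chowct[5]{Y}\simeq \eZ\cdot[pt]$ and that the degree map $\Chowct[5]{Y}\to\eZ$ is an isomorphism compatible with base change to $\bar\fF$. Here $Y$ is a $\eP^2$-bundle over $G=\Grass(3,2)\simeq\eP^2$, so by the projective bundle formula $\Chowct[5]{Y}$ is generated by $\xi^2\cdot q^\ast[pt_G]$, which is the class of a rational point in a fibre. Flat pullback $\Chowct[5]{Y}\to\Chowct[5]{Y_{\bar\fF}}$ sends $[pt]$ to $[pt]$. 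The Chern classes of $\cM$ commute with this base change, and $\cM$ commutes with it too by the cohomology and base change argument of Proposition \ref{PROP - dimension du fibre vectoriel voulu}. It therefore suffices to show $\deg c_5(\cM_{\bar\fF})=56$.

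Over $\bar\fF$, I would use Corollary \ref{COR - droites comme zero section fibre}. The defining section $\sigma\in H^0(P,\cO_P(2))$ of $S$ induces a section $\tilde\sigma=\pi_{2,\ast}\pi_1^\ast\sigma$ of $\cM$. Its zero scheme $Z(\tilde\sigma)$ is the Hilbert scheme of lines of $S$, that is, the set of $(n-1)$-planes of $P$ contained in $S$. By Lemma \ref{LEM - droites P comme zero sections} and the classical count (\cite[Section 8.7]{Dolgachev}), this consists of the $56$ lines: the two components of the preimages of each of the $28$ bitangents. Two things need checking here. The reducible members $E_0\cup p^{-1}(Z_d)$ cannot lie in $S$, because $S$ is disjoint from $E_0$ (Proposition \ref{PROP - surface deg 2 dans fibre projectif}). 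Each line is also counted once, since both components over a bitangent appear as distinct zero loci of sections of $\cO_P(1)$ restricted to $w_0^{-1}(l_0)$.

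Since $\operatorname{rk}\cM=5=\dim Y$, if $Z(\tilde\sigma)$ is finite and reduced then $c_5(\cM)=[Z(\tilde\sigma)]$ (Fulton, Example 3.2.16 / Prop. 14.1), and so its degree is $56$. The main obstacle is reducedness, i.e. that the Hilbert scheme of lines is étale over $\bar\fF$. I would argue this with deformation theory. The Zariski tangent space of $\Hilbsch[\omega_S^\vee,1+t]{S}$ at a line $l$ is $H^0(l,N_{l/S})=H^0(\eP^1,\cO(-1))=0$, because each line is a $(-1)$-curve. The Hilbert scheme of lines of $S$ is identified scheme-theoretically with $Z(\tilde\sigma)$ by the Hilbert scheme construction of Proposition \ref{PROP - schema Hilbert droites de P} and the closed immersion in Lemma \ref{LEM - droites P comme zero sections}. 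So $Z(\tilde\sigma)$ is a reduced zero-dimensional scheme of length $56$, and hence $\deg c_5(\cM)=56$.
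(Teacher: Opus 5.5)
Your proposal is correct and follows the same route as the paper. The paper reduces to $\bar\fF$, identifies $\deg c_5(\cM)$ with the length of the Hilbert scheme of lines of $S_{\bar\fF}$ (the zero scheme of the induced section), and quotes the $56$ lines from Dolgachev; your observation $H^0(l,N_{l/S})=H^0(\eP^1,\cO(-1))=0$ supplies the reducedness that the paper leaves implicit. One small slip: $\cO_G\oplus\Symalg[2]{\cQ^\vee}$ has rank $4$, so $Y$ is a $\eP^3$-bundle over $G$ (consistent with $\dim Y=5=\operatorname{rk}\cM$) and the point class is $\xi^3\cdot q^\ast[pt_G]$ rather than $\xi^2\cdot q^\ast[pt_G]$; this does not affect $\Chowct[5]{Y}\simeq\eZ\cdot[pt]$.
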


\begin{tho}[Main theorem]\label{THO - classe Euler del Pezzo deg 2}
Let $Y = \eP(\cO_G\oplus\Symalg[2]{\cQ^\vee})$ be the Hilbert scheme of lines of $P.$ It is a projective bundle over $G = \Grass(3,2).$ Let $\cM = \pi_{2,\ast}\pi_1^\ast\cO_{\eP^2}(2)$ be the locally free $\cO_Y-$module of rank $5$. Since we don't know $\det \cM$ we have to separate what can happen into:
if $\cM$ is not orientable, the Euler class is
\[e^{CW}(\cM) = 28h \in \Chowtilde[5]{Y,\det \cM},\] 
if $\cM$ is orientable, the degree of the Euler class is
\[\deg^{CW}e^{CW}(\cM) = 28h \in \GW(\fF).\]
\end{tho}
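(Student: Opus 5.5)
The plan is to treat the two cases in the statement separately, since they rest on different earlier results. Both use the same classical input from Lemma \ref{LEM - classe chern M sur C}: $c_5(\cM) = 56[pt]$ in $\Chowct[5]{Y}\simeq\eZ\cdot[pt]$, so that $\deg c_5(\cM)=56$ is even and $56/2=28$. We first fix the numerical data. By Corollary \ref{COR - droites comme zero section fibre}, $Y=\eP(\cO_G\oplus\Symalg[2]{\cQ^\vee})$ is a projective bundle over $G=\Grass(3,2)$, with $d=\dim G=2$ and $n=3$ odd. The sheaf $\cF=\cO_G\oplus\Symalg[2]{\cQ^\vee}$ has rank $1+3=4$, so the fibres are $\eP^3$, the relative dimension is $r=3$ (odd), and $\dim Y=5=\operatorname{rk}\cM$. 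We need to be careful with the indexing convention "projective bundle of rank $r$" in Proposition \ref{PROP - chowtilde non orientable est chow}. The intended meaning is the relative dimension $r$ with $r+d=\dim Y$, and with that reading $r=3$ and $n=3$ are both odd, so the first bullet applies.

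\emph{Non-orientable case.} Suppose $\det\cM\not\simquad\omega_Y$. Theorem \ref{THO - classe Euler non orientable} then applies directly with $\cE=\cM$, $r+d=5$, and $X=\Grass(3,2)$, $n=3$ odd, $r$ odd. It gives that $e^{CW}(\cM)$ is determined by its image in $\Chowct[5]{Y}$, and that this image is $e^{CH}(\cM)=c_5(\cM)$, because the forgetful map $F$ sends the Milnor--Witt Euler class to the Chow Euler class, i.e. the top Chern class. Since $\deg c_5(\cM)=56$ is even, the theorem yields $e^{CW}(\cM)=28h$. To justify this we use Proposition \ref{PROP - fonction hyperbolique bien def torsion}: $F(28h\cdot[pt]) = 56[pt] = F(e^{CW}(\cM))$, and $F$ is an isomorphism here by Proposition \ref{PROP - chowtilde non orientable est chow}.

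\emph{Orientable case.} Suppose $\det\cM\simquad\omega_Y$. Then the quadratic degree of Definition \ref{DEF - degre quadratique} is defined, and since $\cM$ has odd rank $5$ equal to $\dim Y$, Proposition \ref{PROP - classe Euler orientable rang impair} gives $\deg^{CW}e^{CW}(\cM)=\frac{\deg c_5(\cM)}{2}h=28h$. Concretely, Lemma \ref{LEM - annulation classe Witt rang impair} kills the Witt component. By Corollary \ref{COR - groupe Chow-Witt corps}, $\GW(\fF)$ is the fibre product of $\Witt(\fF)$ and $\eZ$, so the class is the unique element with Witt part $0$ and rank $56$, which is $28h$.

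The main obstacle is Lemma \ref{LEM - classe chern M sur C} itself, namely that $\deg c_5(\cM)=56$. This requires the zero locus of the induced section of $\cM$ to be exactly the Hilbert scheme of lines of $S$, finite and reduced of length $56$ over $\bar\fF$. It also requires compatibility of Chern classes with base change to $\bar\fF$, which holds because $\Chowct[5]{Y}\simeq\eZ$ and the degree is invariant under field extension. Since the lemma is stated earlier, we take it as given. The remaining delicate point is matching the rank/parity conventions so that the hypotheses of Proposition \ref{PROP - chowtilde non orientable est chow} are genuinely met.
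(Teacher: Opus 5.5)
Your proposal is correct and follows the same route as the paper. In the non-orientable case you combine Theorem \ref{THO - classe Euler non orientable} (via Proposition \ref{PROP - chowtilde non orientable est chow} and the hyperbolic map of Proposition \ref{PROP - fonction hyperbolique bien def torsion}) with $c_5(\cM)=56[pt]$; in the orientable case you use Proposition \ref{PROP - classe Euler orientable rang impair}, and your explicit checks of $n=3$, $r=3$, $d=2$ and of the injectivity of $F$ only spell out what the paper leaves implicit.
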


\begin{proof}
From Theorem \ref{THO - classe Euler non orientable}, if $\cM$ is not orientable we only have the value in the Chow group. But from Lemma \ref{LEM - classe chern M sur C}, we know it is $56[pt]$. Since $56$ is even, we can use Corollary \ref{PROP - fonction hyperbolique bien def torsion} to give a description as $28h$. In particular, its image in $H^5(Y,\sKW_5(\det \cM))$ is $0$.

The last case is when $\det \cM$ is quadratically equivalent to $\omega_{Y/\fF},$ that is when $\cM$ is relatively orientable over the base field. Then we have the result by Proposition \ref{PROP - classe Euler orientable rang impair} and still the rank by Lemma \ref{LEM - classe chern M sur C}.
\end{proof}

The really "motivic information" is contained in the Witt class (or degree if orientable) we have in the proof (c.f. remark \ref{RMQ - donnee Witt est la seule importante}).

\section{Degree 4 del Pezzo surfaces}

\subsection{Intersection of two quadrics}

\begin{prop}[\cite{Dolgachev} Theorem 8.6.2]\label{PROP - surface deg 4 comme intersection quadriques}
Let $S$ be a del Pezzo surface of degree 4 on $\fF$. Then $S$ is a complete intersection of two quadrics in $\eP^4_\fF$.
\end{prop}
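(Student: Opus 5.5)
This statement is cited from Dolgachev, so the plan is a reconstruction of the classical argument via the anticanonical embedding.

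\emph{Step 1: the anticanonical embedding.} Let $S$ be a smooth del Pezzo surface of degree $4$ over $\fF$. By Riemann--Roch and Kodaira vanishing we have $H^i(S,\omega_S^{\vee})=0$ for $i>0$. It follows that
\[h^0(S,\omega_S^\vee)=\chi(\cO_S)+\tfrac12 K_S\cdot(K_S-(-K_S))=1+K_S^2=5.\]
For degree at least $3$, $\omega_S^\vee$ is very ample; over $\bar{\fF}$ this is checked on the blow-up model of $\eP^2$ at $5$ points in general position, and very ampleness descends to $\fF$. This gives an embedding $S\hookrightarrow\eP^4_\fF=\eP(H^0(S,\omega_S^\vee))$ of degree $K_S^2=4$, defined over $\fF$ because $\omega_S$ is.

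\emph{Step 2: two quadrics through $S$.} Compute $h^0(S,\omega_S^{\vee\otimes 2})=1+\tfrac12(2K_S^2+K_S^2)\cdot$; more precisely, Riemann--Roch gives $\chi(-2K_S)=1+\tfrac12(-2K_S)(-3K_S)=1+3\cdot 4=13$, and the higher cohomology vanishes. Since $\dim\Symalg[2]{H^0(\omega_S^\vee)}=15$, the restriction map $H^0(\eP^4,\cO(2))\to H^0(S,\omega_S^{\vee\otimes2})$ has kernel of dimension at least $2$. To get exactly $2$, prove surjectivity, i.e.\ projective normality. This follows by restricting to a general hyperplane section $C\in|-K_S|$, an elliptic normal quartic curve in $\eP^3$, which is projectively normal. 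Then use the exact sequence $0\to\cO_S(-K_S)\to\cO_S(-2K_S)\to\cO_C(-2K_S)\to0$ together with $H^1(S,-K_S)=0$. This produces two linearly independent quadrics $Q_1,Q_2$ over $\fF$ (a kernel defined over $\fF$ has an $\fF$-basis).

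\emph{Step 3: $S=Q_1\cap Q_2$.} This is the main obstacle. Over $\bar\fF$, $Q_1$ and $Q_2$ have no common component: $S$ is nondegenerate, so neither quadric is reducible with a common hyperplane factor, and the pencil cannot all be multiples of a single quadric. Hence $Q_1\cap Q_2$ is a complete intersection surface of degree $4$ containing $S$, which also has degree $4$. A complete intersection is Cohen--Macaulay and so has no embedded components, and by degree count it has no other components. Since $S$ is reduced of the same degree, $Q_1\cap Q_2=S$ scheme-theoretically. All of this can be checked after base change to $\bar\fF$, since both sides are defined over $\fF$. I would settle the delicate point, that the intersection is not bigger generically along $S$, by a Hilbert polynomial comparison: both schemes have Hilbert polynomial $2t^2+2t+1$.
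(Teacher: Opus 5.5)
Your argument is correct, and it is the classical proof behind Dolgachev's Theorem 8.6.2, which the paper cites without giving its own proof: the anticanonical embedding $S\hookrightarrow\eP^4_\fF$, the count $15-13=2$ of quadrics through $S$ via projective normality (checked after base change to $\bar{\fF}$), and the Hilbert polynomial comparison ($2t^2+2t+1$ on both sides) forcing $Q_1\cap Q_2=S$ once the two quadrics share no component. The one thing to adjust is that the paper allows odd positive characteristic, so the vanishing $H^1(S,\omega_S^{\vee\otimes m})=0$ for $m=1,2$ should not be attributed to Kodaira vanishing; it follows instead from $H^1(S,\cO_S)=0$ and the restriction sequences to a smooth anticanonical elliptic curve $C$.
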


A priori, only the intersection has to be smooth but not the two quadric hypersurfaces. To justify that we can choose them to be smooth, we follow what was done in \cite[Section 3]{KST_DP4}. The surface $S$ is smooth and defined by two quadrics $q_0,\ q_1.$ We can take the pencil $\bfQ$ parameterised by a $\eP^1_\fF$ to try and change them for smooth representatives. 

The singular quadrics in the pencil are given by the roots of $\det(\bfQ)$. From \cite[Proposition 2.1]{Reid_thesis} we know that this polynomial has exactly 5 roots over the algebraic closure. We always have enough points on the $\eP^1_\fF$ parameterizing the pencil of quadrics to avoid roots except over $\eF_3$ or $\eF_5$ eventually.

Thus we can state:
\begin{prop}
For $\fF$ of cardinal at least 6, a smooth del Pezzo surface $S$ of degree $4$ on $\fF$ is the complete intersection of two smooth quadrics in $\eP^4_\fF$.
\end{prop}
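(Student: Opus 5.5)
The claim is that for $\fF$ with at least $6$ elements, $S$ can be written as the intersection of two \emph{smooth} quadrics. By Proposition \ref{PROP - surface deg 4 comme intersection quadriques}, $S = Z(q_0)\cap Z(q_1)\subset \eP^4_\fF$. The plan is to replace $q_0,q_1$ by two other members of the pencil $\bfQ = \{\lambda q_0 + \mu q_1\}$, parametrised by $[\lambda:\mu]\in\eP^1_\fF$, that are both smooth. Any two distinct members of the pencil generate the same ideal $(q_0,q_1)$, hence cut out the same scheme $S$. So it suffices to find two distinct rational points of $\eP^1_\fF$ whose quadrics are smooth.

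First, the singular members are located by the discriminant. Write $A_0,A_1$ for the symmetric $5\times 5$ matrices of $q_0,q_1$; this uses $\operatorname{char}\fF\neq 2$. A member $\lambda q_0+\mu q_1$ is singular exactly when $\det(\lambda A_0+\mu A_1)=0$. This determinant is a binary form of degree $5$ in $(\lambda,\mu)$.

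Next, I will show this form is not identically zero and has $5$ distinct roots over $\bar\fF$. This is \cite[Proposition 2.1]{Reid_thesis}: smoothness of the complete intersection $S$ forces the pencil to contain a nonsingular member, and the discriminant to be separable. If I had to argue it directly, I would use the Jacobian criterion at a singular point. Suppose $x\in\bar\fF^5$ is a nonzero vector in the kernel of a singular member $\lambda_0A_0+\mu_0A_1$. If $x$ also satisfies $q_0(x)=q_1(x)=0$, then the gradients of $q_0$ and $q_1$ at $x$ are linearly dependent, so $S$ would be singular at $x$, a contradiction. Excluding this case gives nondegeneracy of the discriminant, and a similar analysis of a repeated root gives separability.

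Finally, I count rational points. $\eP^1_\fF$ has $|\fF|+1\geq 7$ rational points, while at most $5$ of them are roots of the discriminant. Hence at least $2$ rational points give smooth quadrics, and I take these as the new generators.

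The main obstacle is the second step, and in fact only nonvanishing of the discriminant is needed. The counting step only uses that the form has at most $5$ roots, so separability is not required.
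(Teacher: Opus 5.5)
Your proposal is correct and is essentially the paper's argument: pass to other members of the pencil, use Reid's Proposition 2.1 for the fact that the quintic discriminant $\det(\lambda A_0+\mu A_1)$ is not identically zero (the paper quotes exactly $5$ roots over $\bar\fF$), and count rational points of $\eP^1_\fF$ --- your explicit count ($|\fF|+1\geq 7$ points, at most $5$ singular members, hence two distinct smooth members generating the same ideal) makes precise what the paper leaves implicit. The only caveat concerns your optional direct sketch of nonvanishing, which is incomplete as written: a kernel vector of a singular member need not lie on $S$, so you still have to produce one on $S$ when $\det(A_0+tA_1)\equiv 0$ --- take a polynomial kernel vector $x(t)$ over $\bar\fF(t)$ and differentiate $(A_0+tA_1)x(t)=0$ to get $q_1(x(t))=q_0(x(t))=0$ --- but the citation already covers this.
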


\subsection{Euler class computation}\label{SSEC - calcul degre 4}

We end up in a very similar configuration to the one of the cubic surface in \cite{Kass_Wickelgren}. 

\begin{prop}\label{PROP - lines of degree 4 del Pezzo}
The lines of a del Pezzo surface $S$ of degree 4, over $\fF$ of cardinal at least 6, are a subscheme $\Hilbsch[1+t]{S}$ of dimension 0 in $G = \Grass(5,2)$ characterised as
\[\Hilbsch[1+t]{S} = Z(s_1\oplus s_2) \subseteq \Grass(5,2),\ s_1\oplus s_2 \in H^0(G,\Symalg[2]{\cQ}\oplus \Symalg[2]{\cQ}).\]
\end{prop}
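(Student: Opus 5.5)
By the previous proposition, $S = Z(q_0)\cap Z(q_1)\subseteq \eP^4_\fF$ with $q_0,q_1$ smooth quadrics, i.e. $q_0,q_1 \in H^0(\eP^4,\cO_{\eP^4}(2))$. The plan is to transport this description to $G = \Grass(5,2)$ through the incidence correspondence of Lemma \ref{LEM - schema incidence grassmannienne}, exactly as in the cubic surface case of \cite{Kass_Wickelgren}. Take $V$ of dimension $5$, $I \simeq \eP(\cQ)$ with projections $p: I \to \eP^4 = \eP(V)$ and $q: I \to G$. By the remark after Lemma \ref{LEM - schema incidence grassmannienne}, $p^\ast\cO_{\eP^4}(1) = \cO_{\eP(\cQ)}(1)$. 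The projection formula for projective bundles then gives
\[q_\ast p^\ast \cO_{\eP^4}(2) \simeq q_\ast \cO_{\eP(\cQ)}(2)\simeq \Symalg[2]{\cQ},\]
which is locally free of rank $3$. Pushing forward $q_0,q_1$ therefore produces sections $s_1,s_2 \in H^0(G,\Symalg[2]{\cQ})$. At a point $[V\to W]$ of $G$, their values are the restrictions $\rest{q_i}{\eP(W)}$ of the quadrics to the line $\eP(W)\subseteq \eP^4$.

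Next I would identify the zero scheme of $s_1\oplus s_2$ with the Hilbert scheme of lines, working functorially on $T$-points as in the proof of Proposition \ref{PROP - schema Hilbert droites de P}. A $T$-point of $G$ is a rank-$2$ quotient $V_T\to W$, i.e. a line $L = \eP(W)\subseteq \eP^4_T$. This $T$-point factors through $Z(s_1\oplus s_2)$ exactly when both $q_i$ vanish identically on $L$, that is when $L \subseteq Z(q_0)\cap Z(q_1) = S$. Conversely, a line of $S$ with Hilbert polynomial $1+t$ with respect to $\omega_S^\vee = \rest{\cO_{\eP^4}(1)}{S}$ is a line of $\eP^4$, hence a point of $G$, because $S$ is anticanonically embedded in degree $4$. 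Since both functors agree, $\Hilbsch[1+t]{S} \simeq Z(s_1\oplus s_2)$ as schemes. I would also check compatibility with base change $T \to \Spec{\fF}$, which follows from the formation of $q_\ast$ commuting with base change ($R^1q_\ast\cO_{\eP(\cQ)}(2) = 0$).

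Finally, for the dimension: the bundle $\Symalg[2]{\cQ}\oplus\Symalg[2]{\cQ}$ has rank $6 = \dim G$, so the expected dimension of the zero scheme is $0$. Over $\bar\fF$, a smooth degree $4$ del Pezzo surface contains exactly $16$ lines (\cite{Dolgachev}), a finite set, so $\Hilbsch[1+t]{S}$ is $0$-dimensional. The main obstacle is showing it is reduced, i.e. that $s_1\oplus s_2$ has nondegenerate zeros. I would handle this through the tangent space computation: the tangent space to the Hilbert scheme at a line $L$ is $H^0(L,\cN_{L/S}) = H^0(\eP^1,\cO(-1)) = 0$, since lines are $(-1)$-curves. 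This gives smoothness of dimension $0$ and hence transversality of the section.
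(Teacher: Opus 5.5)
Your proposal is correct and takes the same route as the paper. You push $q_0,q_1$ through the incidence correspondence $I\simeq\eP(\cQ)$ to obtain sections of $\Symalg[2]{\cQ}\oplus\Symalg[2]{\cQ}$, whose common zero scheme is the Hilbert scheme of lines, and you make explicit what the paper's two-line proof leaves implicit: the functor-of-points identification, finiteness via the $16$ lines, and reducedness via $H^0(L,\cN_{L/S})=0$, this last point going beyond what the statement requires.
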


\begin{proof}
The two quadrics define two elements in $\cO_{\eP^4}(2).$ Through the push-pull formula in the incidence scheme \ref{LEM - schema incidence grassmannienne}, we get a global section of $\Symalg[2]{\cQ}\oplus \Symalg[2]{\cQ}$ on the Grassmannian. Hence the description of the Hilbert scheme of lines as zero locus of a global section.
\end{proof}

Compared to the degree 2 case, we have a complete description of the sheaf here. We can check if it is relatively orientable or not. We have $\det(\Symalg[2]{\cQ}\oplus \Symalg[2]{\cQ}) \simeq \cO_G(3)^{\otimes 2} \sim \cO_G$ on one hand. On the other side $\omega_G \simeq \cO_G(-5) \sim \cO_G(1)$. Thus, the locally free sheaf is not relatively orientable over the Grassmannian.

\begin{tho}\label{THO - classe Euler del Pezzo degre 4}
With the description of lines in a del Pezzo surface of degree 4 (over a filed $\fF$ of cardinality at least 6) given in Proposition \ref{PROP - lines of degree 4 del Pezzo}, we get the "enriched count"
\[e^{CW}(\Symalg[2]{\cQ}\oplus \Symalg[2]{\cQ}) = 8h \in \Chowtilde[6]{\Grass(5,2),\cO}.\]
\end{tho}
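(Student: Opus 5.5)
We prove the statement in three steps: show that the twisted Chow-Witt group in top degree is just the Chow group, compute the classical Chern number, and lift it by the hyperbolic map. The setting is $G=\Grass(5,2)$, of dimension $d=6$, with $\cE=\Symalg[2]{\cQ}\oplus\Symalg[2]{\cQ}$ of rank $6$. We have already seen that $\det\cE\simeq\cO_G(6)\simquad\cO_G$, while $\omega_G\simeq\cO_G(-5)\simquad\cO_G(1)$, so $\cE$ is not relatively orientable. Theorem \ref{THO - classe Euler non orientable} is stated for projective bundles $Y=\eP(\cF)$ over a Grassmannian. Here we are on the Grassmannian itself, which is the degenerate case $r=0$ with $n=5$ odd. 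So we do not invoke the theorem directly; we redo its argument with $Y=G$.

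\textbf{Step 1.} By Proposition \ref{PROP - CW comme image H et Witt}, there is a fibre product
\[\Chowtilde[6]{G,\cO}\simeq H^6(G,\sKW_6)\times_{\Chd[6]{G}}\Chowct[6]{G}.\]
To apply it, we check its hypothesis. $\Chowct[6]{G}\simeq\eZ\cdot[pt]$, generated by the class of a rational Schubert point, so it has no $2$-torsion. Next we need $H^6(G,\sKW_6(\cO))\simeq\eZ/2\eZ$ with $\rho$ an isomorphism onto $\Chd[6]{G}$. For this we use \cite[Remark 2.5]{Wendt_oriented-schubert}, which computes the top Witt-type cohomology of Grassmannians: when $n$ is odd and the twist is not quadratically equivalent to $\omega_G$, the top $\fId$-cohomology reduces to $\eZ/2\eZ$ and contains no $\Witt(\fF)$-summand. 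With both facts, the fibre product collapses to $\Chowtilde[6]{G,\cO}\isomr\Chowct[6]{G}$ via $F$, exactly as in the proof of Proposition \ref{PROP - chowtilde non orientable est chow}.

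\textbf{Step 2.} We compute $\deg c_6(\cE)$. After base change to $\bar\fF$, $c_6(\cE)=c_3(\Symalg[2]{\cQ})^2$, and by Proposition \ref{PROP - lines of degree 4 del Pezzo} its degree counts the lines of a smooth degree $4$ del Pezzo surface. That count is $16$ by \cite{Dolgachev}. Alternatively, one can compute it by the splitting principle, using $c_3(\Symalg[2]{\cQ})=4\sigma_1\sigma_2$ (in the standard convention) and Schubert calculus on $\Grass(5,2)$, giving $16\,\sigma_{3,3}$.

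\textbf{Step 3.} Since $16$ is even, $8[pt]\in\Chowct[6]{G}$ is defined. By Proposition \ref{PROP - fonction hyperbolique bien def torsion}, $F(H(8[pt]))=16[pt]=F(e^{CW}(\cE))$, because $F$ sends the Chow-Witt Euler class to the top Chern class. Injectivity of $F$ from Step 1 then gives $e^{CW}(\cE)=8h$.

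The main obstacle is Step 1: justifying the vanishing of the non-torsion Witt part of $H^6(G,\sKW_6)$ for the trivial twist on $\Grass(5,2)$. This requires checking that Wendt's computation indeed gives only $\eZ/2\eZ$ when the twist is not $\omega_G$ and $n$ is odd. If the citation does not cover this case directly, we would fall back on the even-rank Witt Euler class vanishing argument of \cite{Levine_euler-enumerative}.
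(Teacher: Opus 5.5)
Your proposal is correct and is essentially the paper's argument. The paper simply invokes Theorem~\ref{THO - classe Euler non orientable} together with the Schubert computation $c_6=16[pt]$, whereas you unfold that theorem for $Y=G$ (the case $r=0$, which Proposition~\ref{PROP - chowtilde non orientable est chow} formally excludes), using the same input from Wendt's Remark~2.5, the fibre product of Proposition~\ref{PROP - CW comme image H et Witt}, and the hyperbolic lift.

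One slip in your alternative Schubert check: in the indexing where the point class is $\sigma_{3,3}$ (lines in $\eP^4$, $2\times 3$ box), one has $c_3(\Symalg[2]{\cQ})=4\sigma_1\sigma_{1,1}=4\sigma_{2,1}$ with $\sigma_{2,1}^2=\sigma_{3,3}$. Your formula $4\sigma_1\sigma_2$ belongs to the $3\times 2$ box, where the point is $\sigma_{2,2,2}$; mixing the two conventions would give $32$. Your main justification via the $16$ lines is unaffected.
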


\begin{proof}
It is a direct application of Theorem \ref{THO - classe Euler non orientable}. We only compute the Chern class degree. With some Schubert calculus, we find that $c_6(\Symalg[2]{\cQ}\oplus \Symalg[2]{\cQ}) = 16[pt]$ in $\Chowct[6]{G} \simeq \eZ.$
\end{proof}

\bibliographystyle{alpha}
\bibliography{biblio_memoire.bib}
%\printindex
\vspace{1cm}

\texttt{Institut de Mathématiques de Bourgogne, UMR 5584 CNRS, Université Bourgogne Europe,\\
F-21000, Dijon, France}

\textit{Email adress: victor.chachay@ube.fr}
\end{document}